\numberwithin{equation}{subsection}
\numberwithin{figure}{subsection}
\numberwithin{table}{subsection}
\theoremstyle{plain}
\newtheorem{thm}[subsection]{Theorem}
\newtheorem*{thm*}{Theorem}
\newtheorem{lem}[subsection]{Lemma}
\newtheorem*{lem*}{Lemma}
\newtheorem{prop}[subsection]{Proposition}
\newtheorem*{prop*}{Proposition}
\newtheorem{cor}[subsection]{Corollary}
\newtheorem*{cor*}{Corollary}
\newtheorem*{conj*}{Conjecture}
\theoremstyle{definition}
\newtheorem{defn}[subsection]{Definition}
\newtheorem*{defn*}{Definition}
\newtheorem{notn}[subsection]{Notation}
\newtheorem*{notn*}{Notation}
\theoremstyle{remark}
\newtheorem{rmk}[subsection]{Remark}
\newtheorem*{rmk*}{Remark}
\newtheorem*{exa*}{Example}
\newtheorem*{claim*}{Claim}
\newtheorem{claimsub}[equation]{Claim}
\newcounter{listnum}
\newcounter{asslistcounter}
\newenvironment{assertionlist}{
 \begin{list}
  {\upshape (\alph{asslistcounter})}
  {\setlength{\leftmargin}{18pt}
   \setlength{\rightmargin}{0pt}
   \setlength{\itemindent}{0pt}
   \setlength{\labelsep}{5pt}
   \setlength{\labelwidth}{13pt}
   \setlength{\listparindent}{\parindent}
   \setlength{\parsep}{0pt}
   \setlength{\itemsep}{0pt}
   \setlength{\topsep}{-.5\parskip}
   \usecounter{asslistcounter}}}
  {\end{list}}
\newcounter{subenvcounter}
\newenvironment{subenv}{%
 \begin{list}
  {\em (\arabic{subenvcounter})}
  {\setlength{\leftmargin}{20pt}
   \setlength{\rightmargin}{0pt}
   \setlength{\itemindent}{0pt}
   \setlength{\labelsep}{5pt}
   \setlength{\labelwidth}{13pt}
   \setlength{\listparindent}{\parindent}
   \setlength{\parsep}{0pt}
   \setlength{\itemsep}{0pt}
   \setlength{\topsep}{-\parskip}
   \usecounter{subenvcounter}}}
  {\end{list}}
\newcommand{\restr}[2]{{#1}\raise-.5ex\hbox{\ensuremath|}_{#2}}
\newcommand{\cl}[1]{\mkern 1.5mu\overline{\mkern-1.5mu#1\mkern-1.5mu}\mkern 1.5mu}
\def \Lra   {\Leftrightarrow}
\def \mono  {\hookrightarrow}
\def \epi   {\twoheadrightarrow}
\def \isom  {\stackrel{\sim}{\rightarrow}}
\def \bij   {\stackrel{1:1}{\rightarrow}}
\def \iv   {^{-1}}
\DeclareMathOperator{\Ad}{Ad}
\DeclareMathOperator{\Aut}{Aut}
\DeclareMathOperator{\codim}{codim}
\DeclareMathOperator{\cha}{char}
\DeclareMathOperator{\defect}{def}
\DeclareMathOperator{\diag}{diag}
\DeclareMathOperator{\Def}{Def}
\DeclareMathOperator{\image}{im}
\DeclareMathOperator{\Gal}{Gal}
\DeclareMathOperator{\GL}{GL}
\DeclareMathOperator{\Irr}{Irr}
\DeclareMathOperator{\Lie}{Lie}
\DeclareMathOperator{\PGL}{PGL}
\newcommand{\Sh}{\mathsf{Sh}}
\DeclareMathOperator{\Spec}{Spec}
\DeclareMathOperator{\Spf}{Spf}
\DeclareMathOperator{\trace}{Tr}
\DeclareMathOperator{\Res}{Res}
\DeclareMathOperator{\rank}{rk}
\DeclareMathOperator{\val}{val}
\def \Flag {{\mathscr{F}\hspace{-.2em}\ell}}
\def \afr {\mathfrak{a}}
\def \frm {{\mathrm{f}}}
\def \fsf {{\mathsf{f}}}
\def \Gsf {{\mathsf{G}}}
\def \Xsf {{\mathsf{X}}}
\def \bbf {{\mathbf{b}}}
\def \Pbf {{\mathbf{P}}}
\def \hbar {{\overline{h}}}
\def \Lbar {{\overline{L}}}
\def \Ubar {{\overline{U}}}
\def \Xbar {{\overline{X}}}
\def \Ctilde {{\tilde{C}}}
\def \Ltilde {{\tilde{L}}}
\def \Wtilde {{\tilde{W}}}
\def \Xtilde {{\tilde{X}}}
\def \Bhat {{\hat{B}}}
\def \Ghat {{\hat{G}}}
\def \Shat {{\hat{S}}}
\def \That {{\hat{T}}}
\def \Xhat {{\hat{X}}}
\def \Acal {{\mathcal{A}}}
\def \Gcal {{\mathcal{G}}}
\def \Ascr {{\mathscr{A}}}
\def \Gscr {{\mathscr{G}}}
\def \Kscr {{\mathscr{K}}}
\def \Oscr {{\mathscr{O}}}
\def \Sscr {{\mathscr{S}}}
\def \Xscr {{\mathscr{X}}}
\def \AA {{\mathbb{A}}}
\def \BB {{\mathbb{B}}}
\def \GG {{\mathbb{G}}}
\def \NN {{\mathbb{N}}}
\def \PP {{\mathbb{P}}}
\def \QQ {{\mathbb{Q}}}
\def \ZZ {{\mathbb{Z}}}
\def \FFbar {{\overline{\mathbb{F}}}}
\def \Ebreve {{\breve{E}}}
\def \Fbreve {{\breve{F}}}
\newcommand*{\p}{%
    \mathchoice%
        {\turnbox{12}{$\displaystyle\,'$}\;}%
        {\turnbox{12}{$\textstyle\,'$}\;}%
        {\turnbox{12}{$\scriptstyle\,'$}\;}%
        {\turnbox{12}{$\scriptscriptstyle\,'$}\;}%
}%
\def \unif {\varpi}
\def \k {{\FFbar_p}}
\def \L {{\Fbreve}}
\def \I {{\Gamma_0}}
\def \dom {{\mathrm{dom}}}
\def \perf {{\mathrm{perf}}}
\DeclareMathOperator{\type}{type}
\author[P.~Hamacher]{Paul Hamacher}
\address{Paul Hamacher\\
Technische Universit\"at M\"unchen\\
Zentrum Mathematik - M 11\\
Boltzmannstra{\ss}e 3\\
85748 Garching\\
Deutschland\\  }
\email{hamacher@ma.tum.de}
\title[ADLV for quasi-split groups]{On the geometry of affine Deligne-Lusztig varieties for quasi-split groups}
\begin{document}

 \begin{abstract}
  In this paper we discuss the geometry of affine Deligne Lusztig varieties with very special level structure, determining their dimension and connected and irreducible components. As application, we prove the Grothendieck conjecture for Shimura varieties with very special level at $p$ and a function field analogue. 
 \end{abstract}

 \maketitle

 \section{Introduction}

  We fix a nonarchimedian local field $F$ and denote by $\L$ the completion of its maximal unramified extension. We denote by $\Gamma$ and $I$ the absolute Galois group of $F$ and $\L$, respectively, and by $\sigma \in \Aut_F(\L)$ the Frobenius morphism.
  
Let $G$ be a quasi-split connected reductive group over a local field $F$. 
We fix a special $F$-torus $S \subset G$, i.e.\ $S$ is a maximal $\L$-split torus and contains a maximal $F$-split torus. Moreover, let $T = Z_G(S)$ be its centralizer, which is a maximal $F$-torus, and let $B = T\cdot U$ be a (rational) Borel subgroup.
  We denote by $I \subset K \subset G(\L)$ be the standard Iwahori and special subgroup and by $\Kscr$ the parahoric group scheme corresponding to $K$. 
  
  Let $\Flag_G$ denote the affine flag variety whose $k$-points naturally identify with $G(L)/K$. For $\mu \in X_\ast(T)_{I,\dom}$ and $b \in G(\L)$, the affine Deligne-Lusztig variety is the locally closed subset
  \[
   X_\mu(b)(k) = \{gK \in \Flag(k) \mid g\iv b\sigma(g) \in K\varpi^\mu K \}.
  \]
  We equip $X_\mu(b)$ with reduced structure, making it a scheme which is locally of finite type (if $\cha F = p$) or locally of perfectly finite type (if $\cha F = 0$). It is equipped with a $J_b(F)$-action, where $J_b$ is the linear algebraic group given by
  \[
   J_b(R) = \{g\in G(\L \otimes R) \mid g\iv b\sigma(g) = b\}. 
  \]   
  
 There has been much progress in studying the geometry of affine Deligne-Lusztig varieties in the recent years. Motivated by their applications in Shimura varieties and the Langlands program one has studied the following basic questions regarding affine Deligne Lusztig varieties with parahoric level $K$:
 \begin{subenv}
  \item What is their dimension ?
  \item What are their connected components ?
  \item What are their irreducible components ?
 \end{subenv} 
 In \cite{He:GeomADL}, He defined the virtual dimension and proved that it is an upper bound in the case that $G$ is quasi-split and splits over a tamely ramified extension. Recently Milisevic and Viehmann (\cite{MilicevicViehmann:GenericNP}), and He (\cite {He:CordialElts}) have proven that equality holds in most cases, but it is known that the formula does not hold in general.  
  
 If $b$ is HN-irreducible it has been conjectured by Zhou that the connected components are given precisely by their intersection with connected components in the affine flag variety (the general case can be deduced from that). This statement has been proven for unramified groups by Nie (\cite{Nie:ADLVIrredComp}) and for basic $b$ or residually split $G$ by He and Zhou (\cite{HeZhou:ADLVcomp}).
  
 For unramified $G$, the set $\Irr X_\mu(b)$ of topdimensional irreducible components of $X_\mu(b)$ have been shown to be in canonical bijection with the Mirkovic-Vilonen basis $\BB_\mu(\lambda)$ of a certain weight space $V_\mu(\lambda)$ of the highest weight representation of the dual group $\Ghat$ in \cite{HamacherViehmann:ADLVIrredComp}, \cite{Nie:ADLVIrredComp}, \cite{ZhouZhu:IrredCompADLV}, which was originally conjectured by Chen and Zhu. Moreover Zhou and Zhu deduced the numerical version of the conjecture, i.e.\ that both sets have the same cardinality, for any very special $K$ by a formal argument from the hyperspecial case. The variety $X_\mu(b)$ is known to equidimensional for most cases so that one may ignore the ``equidimensional'' requirement (see Corollaries~\ref{cor-equidim} and \ref{cor-equidim-2} below).
 
  For general parahoric level structure and groups the questions are still widely open. In this manuscript we therefore concentrate on the case that $K$ is a maximal parahoric subgroup. Before stating our main result, we present a slightly different viewpoint on the $X_\mu(b)$ and the $J_b(F)$-actions. Denote by $[b]$ the $\sigma$-conjugacy class of $b$ in $G(L)$, considered as reduced sub-ind-scheme in $LG$. We denote by $\Gcal_{[b], \mu}$ the ``universal local $\Kscr$-shtuka over $K\unif^\mu K \cap [b]''$, i.e. whose fibre over $b'$ equals $(L^+\Kscr,b'\sigma)$. Then the local model map 
  \begin{align*}
   \ell_b\colon \Xtilde_\mu(b) \coloneqq \{g \in LG \mid g\iv b\sigma(g) \in K\unif^\mu K\} &\to [b] \cap K\unif^\mu K  \\
   g &\mapsto g\iv b\sigma(g)
  \end{align*}
  canonically identifies $\Xtilde_\mu(b)$ with the moduli space of quasi-isogenies \newline 
   $(L^+\Kscr,b\sigma) \to \Gcal_{[b], \mu}$. By construction $J_b(F)$ acts simply transitively on the gemetric fibres (in fact the morphism is a pro\'etale $\underline{J_b(F)}$-torsor up to universal homeomorphism).
  \begin{center}
   \begin{tikzcd}
    & \Xtilde_\mu(b) \arrow[two heads]{dl}[swap]{L^+\Kscr} \arrow[two heads]{dr}{\underline{J_b(F)}} & \\
    X_\mu(b) & & \left[b\right] \cap K \unif^\mu K.
   \end{tikzcd}
  \end{center}
  In particular, $J_b(F)$-orbits of connected/irreducible components of $X_\mu(b)$ are canonically identified with the respective components of $[b] \cap K\unif^\mu K$.
  We denote the defect of $b$ by $\defect(b) \coloneqq \rank G - \rank J_b$. 
  
  \begin{thm} \label{thm-main}
   \begin{subenv}
    \item The dimension of $X_\mu(b)$ equals the virtual dimension \[ d_G(\mu,b) \coloneqq \langle \rho, \mu-\nu\rangle - \frac{1}{2}\defect (b),\] where $\defect(b) \coloneqq \rank_F(G) - \rank_F(J_b)$ equals the rank of defect of $b$.
    \item There exists a canonical bijection $J_b(F) \backslash \Irr X_\mu(b) \bij \BB_\mu(\lambda([b]))$ where $\lambda([b]) \in X_\ast(S)$ denotes the best integral approximation of the Newton point (see \S~\ref{ssect-invariants} for details).
    \item If $(b,\mu)$ is HN-irreducible, then the connected components of $X_\mu(b)$ are precisely the intersection of $X_\mu(b)$ with the connected components of $\Flag_G$.
   \end{subenv}
  \end{thm}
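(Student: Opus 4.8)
The plan is to route all three parts through the ``upstairs'' intersection $Z\coloneqq[b]\cap K\unif^\mu K\subseteq LG$. Since $\widetilde{X}_\mu(b)\to X_\mu(b)$ is an $L^+\Kscr$-torsor and $\ell_b\colon\widetilde{X}_\mu(b)\to Z$ is a $\underline{J_b(F)}$-torsor up to universal homeomorphism, one gets --- reading dimensions through finite-type truncations --- that $\dim X_\mu(b)=\dim Z-\dim L^+\Kscr$, a canonical bijection $J_b(F)\backslash\Irr X_\mu(b)\cong\Irr Z$, and $\pi_0 Z=J_b(F)\backslash\pi_0 X_\mu(b)$ compatibly with the Kottwitz map to $\pi_0\Flag_G$. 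Thus (a) becomes the count $\dim Z-\dim L^+\Kscr=d_G(\mu,b)$, (b) a description of the top-dimensional components of $Z$, and (c) --- given that $(b,\mu)$ is HN-irreducible --- the statement that $X_\mu(b)$ surjects onto the expected coset in $\pi_0\Flag_G$ with connected fibres.

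The geometric engine is that, because $K$ is very special, after base change along $F\hookrightarrow\L$ the partial affine flag variety $\Flag_G$ is the affine Grassmannian $\Grass_{G_\L}$ attached to the very special parahoric $\Kscr_{\Ocal_\L}$, for which geometric Satake is available (Zhu, Richarz): $\IC_\mu$ corresponds to the highest-weight representation $V(\mu)$ of the pertinent dual group, the Schubert variety $\overline{K\unif^\mu K}\subseteq LG$ is stratified by $\sigma$-conjugacy class with $Z$ the stratum of $[b]$, and the intersections of $\overline{\Grass^\mu}$ with the semi-infinite orbits of a Borel subgroup are equidimensional of the expected dimension, their top-dimensional components being the Mirkovic-Vilonen cycles, canonically indexed by the weight bases $\BB_\mu(\lambda)$. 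All the numerical input one needs --- component counts and closure relations --- is then governed by the stalks of $\IC_\mu$ through the Kato-Lusztig formula.

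For (a), in the HN-irreducible case I would relate $Z$ to the intersection of $\overline{\Grass^\mu}$ with the semi-infinite orbit through $\lambda([b])$ by a correspondence preserving dimension and the set of top-dimensional components, and then compute --- reconciling the Newton point $\nu$ with its integral approximation $\lambda([b])$ through the defect $\defect(b)$ --- that $\dim X_\mu(b)=d_G(\mu,b)$; for general $(b,\mu)$ the Hodge-Newton decomposition (for parahoric level) reduces the pair to a proper Levi subgroup in which it is HN-irreducible, combined with an induction on semisimple rank and the additivity $d_G(\mu,b)=d_M(\mu,b)$ in that situation. For (b), the equality $|\Irr Z|=\dim V_\mu(\lambda([b]))=|\BB_\mu(\lambda([b]))|$ follows from the point-counting argument of Zhou and Zhu --- the generic number of top-dimensional components equals the trace of Frobenius on the relevant stalk of $\IC_\mu$ --- which uses only the very special geometric Satake just cited; to promote this to a canonical bijection I would follow Hamacher and Viehmann, matching each top-dimensional component of $Z$ with a Mirkovic-Vilonen cycle through the same correspondence (made $J_b(F)$-equivariant), the new inputs being the equidimensionality and cycle description on the Satake side in the possibly ramified very special setting, together with the Hodge-Newton reduction for non-HN-irreducible $(b,\mu)$; equidimensionality of $X_\mu(b)$ in the asserted cases comes along with this. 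For (c), fullness of the image in $\pi_0\Flag_G$ follows from HN-irreducibility together with the known nonemptiness/surjectivity pattern for $X_\mu(b)$, and connectedness of the fibres I would obtain by descending to Iwahori level and running the Deligne-Lusztig-type reduction methods used by Nie and by He and Zhou (in the residually split case), now feeding in the closure relations supplied by the very special geometric Satake, so as to reach the quasi-split groups --- such as the residually non-split ones --- not covered by those earlier results.

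The main obstacle is the HN-irreducible core of (b) and (c). On the one hand, one must make the Mirkovic-Vilonen comparison genuinely canonical and $J_b(F)$-equivariant, and control the correspondence relating $Z$ to the semi-infinite-orbit intersection tightly enough that it preserves \emph{both} the set of top-dimensional components and $\pi_0$. On the other hand, for (c) one needs the combinatorial connectedness argument for Mirkovic-Vilonen cycles modulo $\pi_0\Flag_G$ in the ramified very special case, where the ramified geometric Satake must serve as a genuine substitute for the tools available in the unramified or residually split situations.
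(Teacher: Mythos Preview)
Your framing through $Z=[b]\cap K\unif^\mu K$ and the local model diagram is exactly how the paper organises things, and your remarks about ramified geometric Satake, MV cycles, and the Kato--Lusztig formula are all on target. But the reduction strategy you propose for (1) and (2) has a genuine gap.

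You split into ``HN-irreducible'' plus ``Hodge--Newton decomposition for the rest''. The problem is that the HN-irreducible case is not a base case: for HN-irreducible $(b,\mu)$ the element $b$ can still be arbitrarily far from basic, and your proposed ``correspondence'' between $Z$ and the semi-infinite orbit through $\lambda([b])$ is precisely where all the content lies. You give no construction of this correspondence and no mechanism for why it should preserve dimension or top-dimensional components. The paper does \emph{not} attack the HN-irreducible case directly. Instead it reduces \emph{every} $b$ to a \emph{superbasic} element in a Levi $M$ (the one for which $[b]\cap M(\L)$ is superbasic), via the parabolic diagram $\Flag_M\xleftarrow{p}\Flag_P\xrightarrow{i}\Flag_G$. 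The two hard ingredients you are missing are: (i) the fibre-dimension computation for the twisted Lang map $L_{1,b_m}\colon N(\L)\to N(\L)$ on the unipotent radical (Proposition~\ref{prop-reldim-Lang}, generalising \cite{GHKR:DimADL} to the ramified setting), which is what actually produces the formula $\dim\pi^{-1}(X_{\mu_M}^M(b))=d_M(\mu_M,b)+d(\mu,\mu_M)-2\langle\rho_N,\nu\rangle$; and (ii) the explicit superbasic computation, which goes by first reducing to minuscule $\mu_\bullet$ via convolution and then counting via EL-charts (\S\ref{sect-superbasic}). Hodge--Newton decomposition plays no role in the paper's proof of (1)--(2).

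For (3) your suggestion to ``descend to Iwahori level'' is also not what the paper does. The argument stays at the very special level: it uses He--Zhou's result that $[b]_M\cap\overline{K_M\unif^{\mu_M}K_M}$ is connected for basic $b$ in $M=M_b$, shows the fibres of $[b]_P\cap\overline{K\unif^\mu K}\to[b]_M\cap\overline{K\unif^\mu K}$ are connected, and then connects the resulting pieces indexed by $I_{\leq\mu,b}^{\min}$ by constructing explicit $\PP^1$'s in $X_{\leq\mu}(b)$ via root-subgroup maps $u_\alpha\mapsto u_\alpha\sigma(u_\alpha)\cdots\sigma^{r-1}(u_\alpha)$ and their continuations to $\infty$ (Lemma~\ref{lem-cont-to-proj-line}), following Nie and Chen--Kisin--Viehmann. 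The combinatorics of which $\alpha$ to use and how the $\sigma$-orbits of length $d,2d,3d$ behave is the real content; none of this involves Iwahori level.
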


  We can give a more explicit description of bijection in (2).  Assume that $b$ is of standard form and denote by $M$ the (unique) Levi subgroup such that $[b] \cap M(L)$ is superbasic and $P = M N$ the corresponding standard parabolic subgroup of $G$. For $C \in \Irr X_\mu(b)$ denote by $\lambda_C \in X_\ast(T)_I$ the cocharacter such that $C$ is generically contained in $N(L) I_M \varpi^{\lambda_C} K$ and let $w_C$ the shortest element in the Weyl group of $M$ such that $w_C\iv (\lambda_C)$ is $M$-dominant. Moreover we denote by $\Ctilde \subset LG$ the preimage of $C$.
  
 \begin{prop} \label{prop-ic}
 Denote by $Z_C \subset U\unif^{\tilde\lambda}K \cap K \mu K$ (for a certain $\tilde\lambda\in X_\ast(T)_I$ restricting to $\lambda([b])$) be the Mirkovic-Vilonen cycle corresponding to the image of $C$ in $\BB_\mu(\lambda([b]))$. Then $Z_C$ is determined by the following properties.
  \begin{subenv}
   \item $\tilde\lambda = b\sigma(\lambda) - \lambda$.
   \item The set $\ell_b(\Ctilde \cap N(L) I_M \unif^{\lambda_C}) \cdot K$ is an open dense subset of $w_C(Z_C)$.  
  \end{subenv}
 \end{prop}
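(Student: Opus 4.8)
The plan is to trace through the construction of the bijection of Theorem~\ref{thm-main}(2) and make it explicit; once this is done, (1) and (2) characterize $Z_C$ purely formally. Throughout, the key tool is the local model map $\ell_b$ together with the projection $\Xtilde_\mu(b)\to\Flag_G$, $g\mapsto gK$, and the fact that $\ell_b$ is a $\underline{J_b(F)}$-torsor up to universal homeomorphism, so that it preserves dimensions and irreducibility of subvarieties.

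First I would pin down the ambient semi-infinite orbit. Every element of $\BB_\mu(\lambda([b]))$ indexes a top-dimensional irreducible component of $\overline{U(L)\unif^{\tilde\lambda}K}\cap\overline{K\unif^\mu K}$ in $\Flag_G$ for one fixed cocharacter $\tilde\lambda\in X_\ast(T)_I$ lifting $\lambda([b])$; thus $\tilde\lambda$ depends only on $(\mu,\lambda([b]))$, and property (1) is its identification. I would prove it by the same computation of $\ell_b$ on a generic cell that underlies the theorem: for $g = n\cdot m\cdot\unif^{\lambda_C}$ with $n\in N(L)$, $m\in I_M$ one has $\ell_b(g) = \unif^{-\lambda_C}m\iv n\iv\,b\,\sigma(n)\sigma(m)\,\unif^{\sigma(\lambda_C)}$; since $b$ is of standard form it normalizes $M$ and $N$ and its torus part acts explicitly, so moving $b$ and $\sigma$ across the factors shows that $\ell_b(g)K$ lies in a single semi-infinite orbit with base point $\unif^{\tilde\lambda}$, $\tilde\lambda = b\sigma(\lambda)-\lambda$. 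That this class is independent of $C$ and restricts to $\lambda([b])$ follows from the defining property of the best integral approximation of the Newton point recorded in \S\ref{ssect-invariants}.

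For property (2), the task is to identify the closure of $\ell_b(\Ctilde\cap N(L)I_M\unif^{\lambda_C})\cdot K$ with $w_C(Z_C)$. Since $\lambda_C$ need not be $M$-dominant and the superbasic-$M$ picture underlying $\BB_\mu(\lambda([b]))$ is set up with $M$-dominant cocharacters, I conjugate by the shortest $w_C\in W_M$ with $w_C\iv(\lambda_C)$ $M$-dominant (this is legitimate because $W_M$ stabilizes $N$ and permutes the root groups of $M$). Containment is then immediate: for $g\in\Ctilde\cap N(L)I_M\unif^{\lambda_C}$ one has $\ell_b(g)\in K\unif^\mu K$ by definition of $\Xtilde_\mu(b)$, and $\ell_b(g)K$ lies in the relevant semi-infinite orbit by the computation above, so the set in question is contained in $w_C\bigl(\overline{U(L)\unif^{\tilde\lambda}K}\cap\overline{K\unif^\mu K}\bigr)$; moreover it is irreducible, being the continuous image of the irreducible $\Ctilde\cap N(L)I_M\unif^{\lambda_C}$. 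To conclude that its closure is a whole irreducible component — hence an MV cycle, and the one the bijection attaches to $C$ — I would appeal to the construction of that bijection: it produces the MV cycle associated to $C$ precisely as $w_C\iv$ applied to the closure of this image, whose well-posedness (the image lands in one orbit, is of the maximal Mirkovic--Vilonen dimension, and is irreducible) is exactly what is checked in proving Theorem~\ref{thm-main}(2). The uniqueness assertion is then formal: (1) fixes $\tilde\lambda$; $w_C$ is determined by $\lambda_C$; and (2) exhibits $w_C(Z_C)$ as the closure of a prescribed set, whence $Z_C$.

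The main obstacle is the bookkeeping around the reduction to the superbasic Levi $M$ and the twist by $w_C$: one must verify that $N(L)I_M\unif^{\lambda_C}$ really is the correct generic cell of $C$ — which rests on the structure theory of elements of standard form and on the description of $\Irr X_\mu(b)$ via $N(L)$-sweeps of the superbasic $M$-picture — that $\ell_b(-)K$ lands in a single semi-infinite orbit rather than a union of several, and that conjugating by $w_C$ is compatible with the conventions fixing $\BB_\mu(\lambda([b]))$. The ramification also needs care: $\tilde\lambda$ lives only in $X_\ast(T)_I$, so ``$\unif^{\tilde\lambda}$'' and the orbit $U(L)\unif^{\tilde\lambda}K$ require an auxiliary cocharacter lift, and one must check independence of that lift and compatibility with the $\sigma$-twist built into $b\sigma$. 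Granting these points — all controlled by results established earlier in the paper — the proof is a direct unwinding of the construction.
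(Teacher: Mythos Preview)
Your outline is correct and matches the paper's approach. The paper establishes Proposition~\ref{prop-ic} at the end of the proof of Theorem~\ref{thm-main} in \S\ref{sect-reduction-step}: it first proves the explicit description in the superbasic case (Theorem~\ref{thm-superbasic}(2), where $M=G$, $N=\{1\}$, and the claim reduces to Lemma~\ref{lem-image-of-l_b}(3) together with the reduction to minuscule $\mu_\bullet$), and then passes to general $b$ via the single identity $\ell_b(N(\L)\cdot X)=N(\L)\cdot \ell_b(X)$ for $X\subset M(\L)$, which is exactly the surjectivity of the twisted Lang map $L_{1,b_m}$ from Proposition~\ref{prop-reldim-Lang}; the commutative square built from Lemma~\ref{lemma-fibration} and Proposition~\ref{prop-intersection-geom} then matches the resulting description with the MV-cycle side. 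Your direct computation of $\ell_b$ on $g=n\,m\,\unif^{\lambda_C}$ is precisely this identity unpacked inline, and your handling of the $w_C$-twist is the same as the paper's $w_{\lambda_C}$ in the superbasic step; the only point to be careful about is that the image of $\ell_b$ lands a priori in the $w_C$-twisted semi-infinite orbit $U_{\lambda_C}(\L)\unif^{\lambda_C'}K$ rather than in $U(\L)\unif^{\tilde\lambda}K$, which is why the conjugation by $w_C$ is needed before one can read off an MV cycle---but you already flag this.
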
  
  
 As a consequence of the results above, we obtain the following application to  Shimura varieties. Let $(\Gsf, \Xsf)$ be a Shimura datum of Hodge-type such that $\Gsf_{\QQ_p}$ is quasi-split, $K \subset G(\AA_f)$ a small enough compact open subgroup whose level $K$ at $p$ is as above. We denote by $\mu$ the dominant representative in the conjugacy class of cocharacters of $\Gsf$ associated to $\Xsf$. 
 Kisin and Pappas constructed an integral model $\Sscr_{\Gsf}$ of $\Sh(\Gsf,\Xsf)_X$ under some tame additional technical condition. It is equipped with a polarised Abelian scheme  $\Ascr \to \Sscr_\Gsf$ as well as tensors on the display of $\Ascr[p^\infty]$ over the completion of the special fibre of $\Sscr_\Gsf$. In particular, we obtain a $\Gsf_{\QQ_p}$-isocrystal over the perfection of the special fibre $\Sscr_{\Gsf,0}$, defining a Newton stratification on $\Sscr_{\Gsf,0}$. We denote the Newton stratum corresponding to a $\sigma$-conjugacy class $[b]$ by $\Sscr_{\Gsf,0}^{[b]}$. 
 
\begin{thm}[cf.~Cor.~\ref{cor-Newton-Sh}]
  For any $[b] \in B(\Gsf_{\QQ_p},\mu)$, we have
  \begin{subenv}
   \item $\Sscr_{\Gsf,K,0}^{[b]}$ is of pure dimension $\langle \rho, \mu + \nu([b]) \rangle - \frac{1}{2} \defect_G(b)$. 
   \item $\cl{\Sscr_{\Gsf,K,0}^{[b]}} = \bigcup_{[b'] \leq [b]} \Sscr_{\Gsf,K,0}^{[b']}$.
  \end{subenv}
 \end{thm}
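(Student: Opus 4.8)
The plan is to deduce both statements from Theorem~\ref{thm-main} via the Rapoport--Zink uniformization of Newton strata and a global argument using known facts about closures of Newton strata. First I would fix a point $x_0$ in $\Sscr_{\Gsf,K,0}^{[b]}$ and use the integral model structure (the polarised abelian scheme $\Ascr$ together with the crystalline Tate tensors of Kisin--Pappas) to attach to a neighbourhood of $x_0$ in the $[b]$-stratum the local structure of the affine Deligne--Lusztig variety $X_\mu(b)$ for $\Gsf_{\QQ_p}$ with the very special level $K$ at $p$. Concretely, the central sub-datum is that the formal completion of $\Sscr_{\Gsf,K,0}$ along $\Sscr_{\Gsf,K,0}^{[b]}$ is uniformized by a Rapoport--Zink space $\RZ_{\Gsf,b,\mu}$ whose underlying perfect scheme (or special fibre, in the function-field analogue) is $X_\mu(b)$; this is the Kisin--Pappas analogue of the Rapoport--Zink uniformization, and it identifies an \'etale neighbourhood of any point of the Newton stratum with an \'etale neighbourhood of the corresponding point in $X_\mu(b)$. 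Granting this, part~(a) follows immediately: $\dim \Sscr_{\Gsf,K,0}^{[b]} = \dim X_\mu(b) = d_G(\mu,b) = \langle \rho,\mu-\nu\rangle - \tfrac12\defect_G(b)$ by Theorem~\ref{thm-main}(1), and purity of $\dim$ follows from the equidimensionality statements (Corollaries~\ref{cor-equidim} and~\ref{cor-equidim-2}) together with the fact that $\Sscr_{\Gsf,K,0}^{[b]}$ is locally everywhere of this dimension, so no component can have smaller dimension. One should note that $\nu$ here is $\nu([b])$, and the sign convention matches: $\langle\rho,\mu-\nu([b])\rangle$ in Theorem~\ref{thm-main} versus $\langle\rho,\mu+\nu([b])\rangle$ in the corollary reflects the standard replacement of $b$ by $b^{-1}$ (equivalently $\mu$ by $-w_0\mu$) when passing between the group-theoretic ADLV and the Shimura-variety side; I would make this bookkeeping explicit.

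For part~(b), the closure relation, the inclusion $\bigcup_{[b']\le[b]}\Sscr_{\Gsf,K,0}^{[b']}\subseteq \cl{\Sscr_{\Gsf,K,0}^{[b]}}$ is the nontrivial one. The reverse inclusion $\cl{\Sscr_{\Gsf,K,0}^{[b]}}\subseteq \bigcup_{[b']\le[b]}\Sscr_{\Gsf,K,0}^{[b']}$ is standard (Grothendieck specialization: Newton polygons go up under specialization, and $B(\Gsf_{\QQ_p},\mu)$ is exactly the set of $\sigma$-conjugacy classes $\le [b_{\max}]$ that can occur). For the hard inclusion I would proceed stratum by stratum: given $[b']\le[b]$ in $B(\Gsf_{\QQ_p},\mu)$, I want to show that every point of $\Sscr_{\Gsf,K,0}^{[b']}$ lies in the closure of $\Sscr_{\Gsf,K,0}^{[b]}$. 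This is again a local question, and via the uniformization it reduces to the corresponding statement inside $X_\mu(b')$: namely that the ``leaf'' or the locus where the isocrystal jumps to type $[b]$ is nonempty and accumulates at the given point. Here I would invoke the irreducibility/connectedness input of Theorem~\ref{thm-main}(2),(3) — in particular the fact that $\Irr X_\mu(b)$ is nonempty for every $[b]\in B(\Gsf_{\QQ_p},\mu)$, which already guarantees that every Newton stratum in the expected range is nonempty — combined with a dimension count: $\dim\Sscr_{\Gsf,K,0}^{[b]}$ is strictly larger than $\dim\Sscr_{\Gsf,K,0}^{[b']}$ when $[b']<[b]$ (since $\langle\rho,\mu+\nu([b'])\rangle - \tfrac12\defect(b') > \langle\rho,\mu+\nu([b])\rangle-\tfrac12\defect(b)$ fails to be automatic and in fact the inequality goes the other way, so one must be careful: the larger-Newton-polygon stratum is smaller-dimensional). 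So the correct statement is that $\Sscr_{\Gsf,K,0}^{[b']}$ with $[b']\le[b]$ lies in the closure of the \emph{bigger} stratum $[b]$ only when $[b']\ge[b]$ — i.e.\ I should orient the partial order so that $\cl{\Sscr^{[b]}} = \bigcup_{[b']}\Sscr^{[b']}$ with the union over $[b']$ \emph{more degenerate} than $[b]$; with the excerpt's conventions this is the $[b']\le[b]$ in the dominance order on Newton points, which is the less-dominant one, consistently with $\dim\Sscr^{[b']}\le\dim\Sscr^{[b]}$.

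Concretely, to establish the closure relation I would use the following three-step strategy. Step one: reduce to showing, for each cover relation $[b']\lessdot[b]$ in $B(\Gsf_{\QQ_p},\mu)$, that $\Sscr_{\Gsf,K,0}^{[b']}\subseteq\cl{\Sscr_{\Gsf,K,0}^{[b]}}$; the general case follows by transitivity of closure. Step two: pass to the local model via uniformization, so the claim becomes a statement about $X_\mu(b)$ — that in a formal neighbourhood of a point corresponding to $b'$ the generic Newton stratum of $X_\mu(b)$ is the $[b]$-stratum. Step three: establish this using the purity of the Newton stratification on $X_\mu(b)$ (its strata are the $X_\mu^{[b]}(b)$ and each is open dense in its own closure of the right dimension by Theorem~\ref{thm-main}(1) applied to each stratum), together with the nonemptiness from Theorem~\ref{thm-main}(2); a dimension-theoretic argument (a stratum of codimension one in the closure of a bigger one must be in that closure, by irreducibility of local rings / catenarity) then closes the gap. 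The main obstacle I anticipate is \textbf{setting up the Rapoport--Zink uniformization in the Kisin--Pappas setting with very special (not hyperspecial) level}: at hyperspecial level this is essentially due to Kisin, but at a maximal parahoric one needs the full strength of the Kisin--Pappas integral models, the construction of the appropriate local shtuka / crystalline Dieudonn\'e datum at $p$, and a careful check that the tensors descend and that the resulting map to $X_\mu(b)$ is an isomorphism of perfect schemes (resp.\ identifies the reduced special fibres in char $p$) — including the delicate point that the ``adjusted'' group-theoretic datum matches the one cut out geometrically by the Hodge tensors. Once that dictionary is in place, everything else is a formal consequence of Theorem~\ref{thm-main} and standard properties of Newton stratifications.
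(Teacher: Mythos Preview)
Your approach has a fundamental conceptual gap. Rapoport--Zink uniformization does \emph{not} identify an \'etale neighbourhood in the Newton stratum $\Sscr_{\Gsf,K,0}^{[b]}$ with an \'etale neighbourhood in $X_\mu(b)$; rather, the Newton stratum is (up to perfection and a finite morphism) a product of $X_\mu(b)$ with a central leaf of dimension $\langle 2\rho,\nu([b])\rangle$. This is exactly the discrepancy between $\langle\rho,\mu-\nu\rangle$ in Theorem~\ref{thm-main} and $\langle\rho,\mu+\nu\rangle$ in the corollary---it is not a sign convention, and your bookkeeping would not fix it. More seriously, for part~(2) your ``step two'' proposes to study Newton strata inside $X_\mu(b)$, but $X_\mu(b)$ carries no Newton stratification: by definition every point $gK\in X_\mu(b)$ has $g^{-1}b\sigma(g)\in[b]$. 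Uniformizing a single stratum cannot tell you how it meets the others.

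The paper instead works with the \emph{deformation space} $\Def_G$ of the local $\Kscr$-shtuka $(K,b\sigma)$, which is the completed local ring of the \emph{whole} Shimura variety at a point $x$ (Kisin--Pappas), and this identification respects Newton strata. Kim's formula gives $\dim\Def_G^{[b]}=\dim_{eK}X_\mu(b)+\langle 2\rho,\nu([b])\rangle$, and Theorem~\ref{thm-main}(1) (only the upper bound $\dim_{eK}X_\mu(b)\le d(\mu,b)$ is needed) yields $\codim\Def_G^{\bbf}\ge\ell(\bbf,\bbf_{\max})$. Both the pure-dimension statement and the closure relation then follow simultaneously from a formal purity argument of Viehmann; neither Theorem~\ref{thm-main}(2),(3) nor the equidimensionality corollaries are used. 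In fact the logic runs the other way: Corollary~\ref{cor-equidim} is \emph{deduced from} Proposition~\ref{prop-Newton-Def}, not used as input.
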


 The proof of the main results is a generalisation of their respective proofs in the unramified case. In sections~\ref{sect-group-theory} and \ref{sect-ADLV} we recall the necessary structure theory of reductive group and facts about affine Deligne-Lusztig varieties and generalise them to the ramified case if necessary. We prove the statements about affine Deligne-Luztig varieties in the basic case by first reducing to the case of minuscule $\mu$ using a technique in \cite{Nie:ADLVIrredComp} and then calculate the dimension and number of irreducible components by considering a cellular decomposition of $X_\mu(b)$ parametrised by combinatorial objects called EL-charts as in \cite{HamacherViehmann:ADLVIrredComp}. In section~\ref{sect-reduction-step} we prove that the superbasic case implies the general case by generalising the calculations made for split groups in \cite{GHKR:DimADL}. In section~\ref{sect-cc} we  explain, how to generalise the Nie's proof for the description of connected components for unramified groups in \cite{Nie:ADLVcomp} to our situation.  As a consequence of the geometric properties properties of affine Deligne-Lusztig varieties proven above, we deduce the result on the geometry of Newton strata in Shimura varieties as well as equidimensionality in equal characteristic as well as for the most interesting cases in mixed characteristic in section~\ref{sect-Shimura}.
 
 {\bf Acknowledgements:} I am grateful to Tom Haines for his explanations about the Kato-Lusztig formula in the ramified case. Many thanks go to Timo Richarz for his explanations about the Satake equivalence. The author was partially supported by ERC Consolidator Grant 770936: NewtonStrat. 
 
\section{Group theoretical background} \label{sect-group-theory}

\subsection{} Let $G$ be a quasi-split connected reductive group over a local field $F$. 
We fix a special $F$-torus $S \subset G$, i.e.\ $S$ is a maximal $L$-split torus and contains a maximal $F$-split torus. Moreover, let $T = Z_G(S)$ be its centralizer, which is a maximal $F$-torus, and let $B = T\cdot U$ be a (rational) Borel subgroup.
We denote by $\sigma$ the Frobenius automorphism in $\Gal(\Fbreve/F)$ as well as the induced automorphism on $G(\Fbreve)$ by $\sigma$.

We denote by $\Acal = X_\ast(S)_\QQ$ the apartment of the Bruhat-Tits building of $G$ corresponding to $S$. Let $\afr$ be the alcove in the dominant Weyl chamber of $\Acal$ whose closure contains $0$. We denote by $I \subset G(\L)$ the Iwahori subgroup
stabilizing $\afr$  and by $K \subset G(L)$ the special parahoric subgroup stabilizing $0 \in \Acal$.

\subsection{} Let $N$ be the normalizer of $T$. By definition, the finite Weyl group
associated with $S$ is
\[
 W = N(L)/T (L)
\]
and the Iwahori-Weyl group associated with S is
\[
 \Wtilde = N(L)/T(L)_1
\]
Here $T(L)_1$ denotes the unique parahoric subgroup of $T(L)$.

Let $I$ be the absolute Galois group of $\L$. By \cite[Appendix]{PappasRapoport:AffineFlag}, we may identify $T(\L)/T(\L)_1$ with the Galois coinvariants $X_\ast(T)_I$. We denote by $\unif^\lambda$ the element of $T(\L)/T(\L)_1$ corresponding to $\lambda \in X_\ast(T)_I$. Moreover, we obtain the following short exact sequence:
\begin{center}
 \begin{tikzcd}
  1 \ar{r}& X_\ast(T)_I \ar{r} & \Wtilde \ar{r} & W  \ar{r} & 1
 \end{tikzcd}
\end{center}
The stabiliser of $0 \in \Acal$ maps isomorphically onto $W$. Thus the short exact sequence splits and we obtain a semidirect product $\Wtilde = X_\ast(T)_I \rtimes W$. 

We have the Iwahori-Bruhat decompostion
\[
 G(\Fbreve) = \bigsqcup_{x \in \Wtilde} IxI,
\]
which induces $K = \bigsqcup_{w \in W} IwI$. Thus we also obtain
\[
 G(\Fbreve) = \bigsqcup_{\lambda \in X_\ast(T)_I} I\varpi^\lambda K
 = \bigsqcup_{\mu \in X_\ast(T)_{I,\rm dom}} K\varpi^\mu K.
\]

\subsection{}
Let $\Phi$ be the set of (relative) roots of $G$ over $L$ with respect to $S$ and $\Phi^+ \subset \Phi$ the set of positive roots corresponding to $B$. For each $\alpha \in \Phi$ we denote by $U_\alpha \subset G$ its root subgroup. By Bruhat-Tits theory there exists a canonical filtration $(U_{\alpha,m})_{m\in\QQ}$ of $U(\Fbreve)$; we denote $U_{\alpha,m+} \coloneqq \bigcap_{\epsilon > 0} U_{\alpha,m+\epsilon}$ (For details see e.g. \cite[\S~4]{Landvogt:Compactification}). The set of affine roots $\Phi_a$ is defined as the functions $\alpha + m$ such that $U_{\alpha,m+} \not= U_{\alpha,m}$ if $2\alpha\not\in \Phi$ and $U_{\alpha,m+} \not\subseteq U_{\alpha,m}U_{2\alpha}$ otherwise. 

There exists a reduced root system $\Sigma$ on $\Acal^\ast$ such that the set of  hyperplanes defined by the associated affine root system $\Sigma_a \coloneqq \Sigma+\ZZ$ equals the set of hyperplanes defined by the elements of $\Phi_a$. The roots of $\Sigma$ are proportional to the roots of $\Phi$. Let $\alpha'+m' \in \Sigma$ and let $\alpha+m \in \Phi_a$ the associated affine root. If $\alpha/2 \not\in \Phi$, we denote
\[
 U'_{\alpha',m'} =
 \begin{cases}
  U_{\alpha,m} & \textnormal{if } 2\alpha, \not\in \Phi \\
  U_{\alpha,m-1} \cdot U_{2\alpha,2m} & \textnormal{if } 2\alpha, \in \Phi.
 \end{cases} 
\]
See also \cite[\S~9.a,9.b]{PappasRapoport:AffineFlag} for an explicit description. For any $\alpha \in \Sigma, m\in \ZZ$ the group $U'_{\alpha,m}$ is stable under conjugation with $T(\L)_1$ and satisfies $\unif^\lambda U'_{\alpha',m} \unif^{-\lambda} = U'_{\alpha',m+\langle \alpha,\lambda\rangle}$ for any $\lambda\in X_\ast(T)_I$.
By Iwahori decomposition we have
\[
 I = T(L)_1 \cdot \prod_{\alpha \in \Sigma^+} U'_{\alpha,0} \cdot \prod_{\alpha \in \Sigma^-} U'_{\alpha,1},
\]
where all products are direct products.

\begin{defn}
 Let $\lambda \in X_\ast(T)_I$. For any root $\alpha \in \Phi$, we write $\alpha >_\lambda 0$ if either $\langle \alpha,\lambda \rangle > 0$ or $\alpha \in \Phi^+$ and $\langle \alpha, \lambda \rangle = 0$. We write $\alpha <_\lambda 0$ otherwise. Moreover we denote
 \[
  U_\lambda \coloneqq \prod_{\alpha >_\lambda 0} U_\alpha \textnormal{ and } \Ubar_\lambda \coloneqq \prod_{\alpha <_\lambda 0} U_\alpha.
 \]
\end{defn}
 Note that $U_\lambda$ and $\Ubar_\lambda$ are opposite maximal unipotent subgroups of $G$. More precisely $U_\lambda = w_\lambda U w_\lambda\iv$, where $w_\lambda \in W$ denotes the shortest element such that $w_\lambda\iv (\lambda)$ is dominant. 
 
 Furthermore we denote $I_\lambda \coloneqq \unif^{-\lambda} I \unif^\lambda$ for $\lambda \in X_\ast(T)_I$. Consider the decomposition
 \[
  I_\lambda = T(L)_1 \cdot \prod_{\alpha \in \Sigma} U'_{\alpha,\lambda_\alpha}
 \]
 with $\lambda_\alpha = -\langle \alpha, \lambda \rangle$ if $\alpha > 0$ and $\lambda_\alpha = 1-\langle \alpha, \lambda \rangle$ otherwise. Note that $\lambda_\alpha \leq 0 $ iff $\alpha >_\lambda 0$ (or equivalently $\lambda_\alpha \geq 1$ iff $\alpha <_\lambda 0$). We fix the notation $\alpha \gg_\lambda 0$ for $\lambda_\alpha < 0$ for later use.
 
 Lastly, we denote 
 \begin{align*}
  I^\lambda &\coloneqq I \cap \varpi^\lambda K \varpi^{-\lambda} = \unif^\lambda (I_\lambda \cap K ) \unif^{-\lambda} 
 \end{align*}
 Note that the canonical morphism $\Theta_\lambda\colon I \epi I\unif^\lambda K/K$ factors through an isomorphism $I/I_\lambda \isom I\unif^\lambda K/K$. 
 
 \subsection{}
 We denote by $\Flag_G$ the base change to $\FFbar_p$ of the affine flag variety (over $k_F$) associated with $G$ as in \cite[\S~1.c]{PappasRapoport:AffineFlag} and \cite[Def.~9.4]{BhattScholze:AffGr}. In particular, $\Flag_G$
is a sheaf on the fpqc-site of $\FFbar_p$-schemes ($\cha F$ = $p$) resp. of perfect $\FFbar_p$-schemes
($\cha F = 0$) with
\[
 \Flag_G (\FFbar_p) = G(L)/K
\]
which is representable by an inductive limit of finite type schemes ($\cha F = p$)
resp. of perfectly of finite type schemes ($\cha F = 0$). More generally we denote for by $K_n$ the $n$-th congruence subgroup of $K$ and by $\Flag_{G,n}$ the analogous ind-scheme with
\[
 \Flag_{G,n} (\FFbar_p) = G(L)/K_n
\]
Since the underlying topological spaces are Jacobian, we will refer to locally closed subsets by their closed $\FFbar_p$-points.

The Kottwitz map $\kappa = \kappa_G\colon G(\L) \to \pi_1(G)_I$, which maps $Kt^\mu K$ to the image of $\mu$ in $\pi_1(G)_I$, induces a bijection $\pi_0(\Flag_{G,n}) \cong \pi_1(G)_I$. For any subset $X$ of $G(L)$ or $G(L)/K_n$ and $\omega \in \pi_1(G)_I$, we denote $X^\omega \coloneqq X \cap \kappa^{-1}(\omega)$.

Since subsets of $G(\L)$ are generally easier to work with than subsets of $G(\L)/K_n$, we introduce the following notation setting the framework to study subvarieties of $\Flag_{G,n}$ via subsets of $G(\L)$.

\begin{defn}
 Let $X \subset G(\L)$ be a subset. 
 \begin{subenv} 
 \item We say that $X$ is admissible, if there exists an $n\in N$ such that it is the preimage of a locally closed subscheme $\Xbar \subset G(\L)/K_n$. In this case we define the dimension of $X$ by
 \[
  \dim_K X = \dim \Xbar - \dim K/K_n. 
 \]
 Similarly, for any property $\Pbf$ that is invariant under torsors of connected linear algebraic groups (e.g.\ irreducible, smooth) we say that $X$ has property $\Pbf$ if $\Xbar$ does.
 \item We say that $X$ is ind-admissible, if for every bounded admissible closed subset $Z \subset G(L)$ the intersection $X \cap Z$ is admissible. In this case, we define the dimension of $X$ by
 \[
  \dim_K X = \sup_{Z \subset G(L) \textnormal{ bdd.\ adm.}} \dim_K X \cap Z. 
 \]
 \end{subenv}
\end{defn}

\begin{rmk}
 Equivalently, we could define admissible sets as the preimage of any locally closed subset of some flag variety $\Flag_\Gscr$, where $\Gscr$ is an integral model $G$. However as the $K_n$ form a cofinal system, these two definitions are equivalent. In the following we also use other groups that $K_n$, most notably the $n$-the congruence group $I_n$ of $I$.
\end{rmk}

\begin{notn}
 Since any locally closed subscheme $\Xscr \subset \Flag_n(\k)$ is uniquely determined by its $\k$-valued points, we simplify the notation by referring to $\Xscr$ by $\Xscr(\k) \subset G(L)/K$. Similarly we refer to its preimage of $\tilde\Xscr \subset LG$ by its corresponding admissible subset  $\tilde\Xscr(\k) \subset G(\L)$. Moreover, if $X \subset G(L)/K_n$, we denote by $\Xtilde \subset G(\L)$ its preimage.
\end{notn}

\begin{lem} \label{lemma-admissible}
 Let $X \subset G(\L)$ be an admissible subset. Then $\sigma^a(X),gX$ and $Xg$ are admissible of the same dimension for any $a \in \ZZ, g \in G(\L)$. 
\end{lem}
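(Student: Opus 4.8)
The plan is to realise each of the three operations as the effect on $\k$-points of an automorphism, a homeomorphism, or a torsor pullback of the relevant affine flag varieties, and to read off admissibility and dimension from that. Fix $n$ (which we may take $\ge 1$) and a locally closed subscheme $\Xbar \subseteq \Flag_{G,n}$ whose preimage under the projection $p_n\colon G(\L) \to \Flag_{G,n}(\k)$ is $X$; thus $X = p_n\iv(\Xbar)$ is right $K_n$-invariant.

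The cases of $\sigma^a$ and $gX$ are formal. Since $K_n$ is defined over $F$, the ($k_F$-semilinear) action of $\sigma$ on $LG$ preserves $K_n$, hence descends to an automorphism $\bar\sigma$ of $\Flag_{G,n}$ which on underlying topological spaces is a homeomorphism, and so preserves local closedness and dimension. As $\sigma^a$ commutes with right multiplication by $K_n$, the set $\sigma^a(X)$ is right $K_n$-invariant and equals $p_n\iv(\bar\sigma^a(\Xbar))$, hence admissible of the same dimension. Similarly, left translation $L_g$ on $LG$ commutes with the right $K_n$-action, so it descends to an automorphism $\bar L_g$ of the ind-scheme $\Flag_{G,n}$, and $gX = p_n\iv(\bar L_g(\Xbar))$ is admissible of the same dimension.

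The only substantial case is $Xg$. Right translation $R_g$ conjugates the right $K_n$-action into the right $g\iv K_n g$-action, so it descends not to $\Flag_{G,n}$ but to an isomorphism $LG/K_n \riso LG/(g\iv K_n g)$. To return to congruence level, pick $m$ with $K_m \subseteq g\iv K_n g$; then $Xg$ is right $K_m$-invariant, so $Xg = p_m\iv(\overline{Xg})$ for a unique $\overline{Xg} \subseteq \Flag_{G,m}(\k)$, and comparing $\k$-points shows $\overline{Xg}$ is the preimage of $\Xbar$ under
\[
 f\colon \Flag_{G,m} \to \Flag_{G,n}, \qquad hK_m \mapsto hg\iv K_n ,
\]
which via $R_{g\iv}$ is identified with the quotient morphism $LG/(gK_mg\iv) \to LG/K_n$, a torsor under the connected smooth group $K_n/(gK_mg\iv)$. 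Hence $\overline{Xg}$ is locally closed, so $Xg$ is admissible; and since conjugation by $g$ identifies $K_n/(gK_mg\iv)$ with $(g\iv K_n g)/K_m$,
\[
 \dim_K(Xg) - \dim_K(X) = \dim\big((g\iv K_n g)/K_m\big) - \dim(K_n/K_m).
\]
The one genuinely nontrivial point is that this difference vanishes, i.e. that $K_n$ and its conjugate $g\iv K_n g$ have the same ``relative volume''. To see it, note that conjugation by any element of $G(\L)$ is an automorphism of the ind-scheme $LG$ and therefore preserves the relative dimension of a pair of compact open subgroups; a short telescoping argument then reduces the claim from $K_n$ to the special parahoric $K$, and writing $g = k_1\varpi^\mu k_2$ by the Cartan decomposition and absorbing $k_1,k_2 \in K$ reduces it to comparing $K$ with $\varpi^{-\mu}K\varpi^\mu$. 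For the latter one decomposes along root groups: conjugation by $\varpi^{-\mu}$ shifts the Bruhat--Tits filtration of $U_\alpha$ by $\langle\alpha,\mu\rangle$, so the two dimensions differ by $\sum_{\alpha\in\Phi}\langle\alpha,\mu\rangle = 0$, the root system being stable under $\alpha\mapsto-\alpha$. So the main obstacle is exactly this volume-invariance; the rest is bookkeeping with congruence levels.
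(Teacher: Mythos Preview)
Your proof is correct and follows essentially the same line as the paper's: the cases $\sigma^a(X)$ and $gX$ are handled identically via induced automorphisms of $\Flag_{G,n}$, and for $Xg$ both arguments reduce via the Cartan decomposition to the case $g=\varpi^\mu$ and then verify the key ``volume invariance'' by a root-group count in which the shifts $\langle\alpha,\mu\rangle$ cancel in pairs $\alpha\leftrightarrow -\alpha$.

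The only stylistic difference is in the bookkeeping for that last step. The paper passes to Iwahori congruence subgroups $I_n$ and $I_{\lambda,n}=\varpi^{-\lambda}I_n\varpi^\lambda$, where the Iwahori decomposition gives a clean direct-product structure on root groups, and checks $\dim I_n/(I_n\cap I_{\lambda,n})=\dim I_{\lambda,n}/(I_n\cap I_{\lambda,n})$ directly. You instead stay with the special parahoric, telescope from $K_n$ down to $K$ using that conjugation preserves the relative dimension $\dim(K/K_n)$, and then compare $K$ with $\varpi^{-\mu}K\varpi^\mu$. Both routes arrive at the same root-group cancellation; the paper's choice of $I_n$ makes the product decomposition (and hence the dimension count) slightly more transparent, while your telescoping argument avoids introducing a second congruence filtration but is a touch more implicit about why the special parahoric admits the requisite root-group description.
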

\begin{proof}
 Since the maps $x \mapsto \sigma^{a}(x), x \mapsto g\cdot x$ induce isomorphisms $\Flag_{G,n} \isom \Flag_{G,n}$ the claim is obviously true for $\sigma^{a}(X)$ and $gX$. By the Cartan decomposition it suffices to show the last statement for $g\in K$ and $g = \unif^\lambda$ for $\lambda \in X_\ast(T)$. The first case follows by the same argument as above. Now $x \mapsto x\cdot\unif^\lambda$ induces an isomorphism $G(L)/I_n \isom G(L)/I_{\lambda,n}$. An easy calculation on root groups shows that $\dim I_n/(I_n \cap I_{\lambda,n}) = \dim I_{\lambda,n}/(I_n \cap I_{\lambda,n})$ and thus $\dim_K I_n = \dim_K I_{\lambda,n}$, finishing the proof.
\end{proof}

\begin{lem} \label{lemma-adm-inv}
 Let $X \subset G(\L)$ be a bounded admissible subset. Then $X\iv \coloneqq \{g \in G(L) \mid g\iv \in X\}$ is admissible and of the same dimension as $X$.
\end{lem}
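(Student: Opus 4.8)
The plan is to use that the inversion morphism $\iota\colon LG\to LG$, $g\mapsto g^{-1}$, is an automorphism of the ind-scheme $LG$ (over $\FFbar_p$, resp.\ of its perfection): it automatically preserves local closedness, boundedness, and, suitably interpreted, dimension. The only genuine issue is that admissibility is defined by invariance under congruence subgroups on the \emph{right}, whereas $\iota$ interchanges left and right cosets, so one has to exploit the boundedness of $X$ to convert the left invariance that $\iota$ produces into the right invariance that the definition requires.

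First I would dispose of the formal points. Boundedness of $X$ gives boundedness of $X^{-1}$; and since $X$ is admissible, the associated subscheme $\underline X\subseteq LG$ is locally closed and contained in a quasi-compact (hence finite type, resp.\ perfectly of finite type) piece of $LG$, so $\underline{X^{-1}}=\iota(\underline X)$ is again locally closed and bounded. Next, fix $n$ with $X$ right $K_n$-invariant; I would produce $m\ge n$ with $X^{-1}$ right $K_m$-invariant. Writing the elements $h$ of the bounded set containing $X$ in Cartan form $h=k_1\unif^\lambda k_2$ and using that $K_m$ is normal in $K$ together with the shift $\unif^{-\lambda}K_m\unif^\lambda\subseteq K_{m-c(\lambda)}$, $c(\lambda)=\max_\alpha|\langle\alpha,\lambda\rangle|$ — a root-subgroup computation of exactly the kind used in Lemma~\ref{lemma-admissible} — one finds, for $m$ large enough uniformly over that bounded set, that $h^{-1}K_m h\subseteq K_n$ for all such $h$. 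Then for $g\in X^{-1}$ (so $g^{-1}\in X$) and $k\in K_m$ we have $(gk)^{-1}=g^{-1}\cdot(g k^{-1}g^{-1})$ with $g k^{-1}g^{-1}\in K_n$ by the estimate applied to $h=g^{-1}$, hence $(gk)^{-1}\in X K_n=X$ and $gk\in X^{-1}$. Thus $\underline{X^{-1}}$ is invariant under right multiplication by $L^+\Kscr_m$, and by fpqc descent along the $L^+\Kscr_m$-torsor $LG\to\Flag_{G,m}$ it descends to a locally closed subscheme of $\Flag_{G,m}$; this shows $X^{-1}$ is admissible.

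For the equality of dimensions, note that $X^{-1}$ being right $K_m$-invariant is equivalent to $X$ being left $K_m$-invariant, so both $X$ and $X^{-1}$ are $K_m$-invariant on both sides. Writing $\bar X_m,\bar Y\subseteq\Flag_{G,m}$ for the images of $\underline X$ and $\underline{X^{-1}}$, we have $\dim_K X=\dim\bar X_m-\dim K/K_m$ and $\dim_K X^{-1}=\dim\bar Y-\dim K/K_m$, while $\iota$ carries $\bar Y=\underline{X^{-1}}/L^+\Kscr_m$ isomorphically onto the \emph{left} quotient $L^+\Kscr_m\backslash\underline X$, whence $\dim\bar Y=\dim\bigl(L^+\Kscr_m\backslash\underline X\bigr)$. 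So everything reduces to showing that the left and right $L^+\Kscr_m$-quotients of the bounded, two-sided $\Kscr_m$-invariant $\underline X$ have the same dimension; mapping both to the common double-coset quotient $L^+\Kscr_m\backslash\underline X/L^+\Kscr_m$, this amounts to the equality of fibre dimensions $\dim\bigl(K_m/(K_m\cap gK_mg^{-1})\bigr)=\dim\bigl(K_m/(K_m\cap g^{-1}K_mg)\bigr)$ for the (bounded) $g$ occurring in $\underline X$. This last point — for the two conjugate subgroups $K_m\cap gK_mg^{-1}$ and $K_m\cap g^{-1}K_mg$ — is the main obstacle: because $LG$ and $L^+\Kscr_m$ are infinite-dimensional one cannot appeal directly to the statement that $\iota$ preserves dimension, so one instead passes to a sufficiently deep finite-type congruence quotient of $K_m$ and controls how conjugation by the bounded element $g$ moves the congruence filtration, by a root-subgroup estimate entirely parallel to the identity $\dim I_n/(I_n\cap I_{\lambda,n})=\dim I_{\lambda,n}/(I_n\cap I_{\lambda,n})$ established in the proof of Lemma~\ref{lemma-admissible}.
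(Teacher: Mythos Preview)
Your argument is correct and rests on the same core input as the paper's --- the equality $\dim_K H = \dim_K gHg^{-1}$ for congruence subgroups proved in Lemma~\ref{lemma-admissible}. The organisation, however, differs. The paper first uses boundedness to cover $X$ by finitely many Iwahori double cosets and so reduces to $X \subset IxI$ for a single $x \in \Wtilde$; then the only conjugation in play is by the fixed element $x$, one writes down the chain $IxI/I_n \cong I_n \backslash Ix^{-1}I \cong Ix^{-1}I/(xI_nx^{-1})$, and the conclusion is immediate from $\dim_K xI_nx^{-1} = \dim_K I_n$. You instead keep the full bounded $X$, manufacture a uniform $m$ with $h^{-1}K_mh \subset K_n$ across the bounded set to obtain admissibility of $X^{-1}$, and then compare the left and right $K_m$-quotients through the double quotient $K_m \backslash X / K_m$. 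This works, but making the double-quotient step precise really amounts to stratifying $X$ by $K_m$-double cosets and checking the fibre-dimension identity on each stratum --- which is exactly the single-coset computation the paper isolates at the outset. So the paper's reduction to one Iwahori double coset is the shortcut that collapses your uniform estimate and your stratified fibre comparison into a single explicit step.
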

\begin{proof}
 Since $X$ is bounded, it can be covered by finitely many $I$-double cosets. Thus we may assume that $X \subset IxI$ for some $x \in \Wtilde$. Now choose $n \in \NN$ such that $X$ is right $I_n$-stable and consider the commutative diagram
 \begin{center}
  \begin{tikzcd}
   IxI \arrow{r}{(\cdot)\iv} \arrow[two heads]{d} & Ix\iv I \arrow[equal]{r}{} \arrow[two heads]{d} & Ix\iv I \arrow[two heads]{d} \\
   IxI / I_n \arrow{r}{\sim} & I_n \backslash Ix\iv I \arrow{r}{\sim} & Ix\iv I / x I_n x\iv.
  \end{tikzcd}
 \end{center}
 We see that $X\iv$ is admissible and that $\dim_K i(X) = \dim X/I_n - \dim_K x I_n x\iv$. Since $\dim_K x I_n x\iv = \dim_K I_n$ by the previous lemma, the claim follows.
\end{proof}

\subsection{} \label{ssect-satake}

Recall that Satake equivalence (\cite{Zhu:RamifiedSatake},\cite{Richarz:AffGr}) states that the category of $K$-equivariant perverse sheaves on $\Flag_G$ is equivalent to the category of representations of a reductive group $G^\vee$, constructed as follows (see \cite[\S~5]{Haines:Dualities} for details). Denote by $\Ghat$ the group corresponding to the dual (absolute) root datum $(X_\ast(T),\Phi^\vee,X^\ast(T),\Phi)$. After fixing a splitting $(\Bhat,\That,\Xhat)$ of $\Ghat$, the Galois action on the root datum lifts uniquely to $\Ghat$. We define
\[
 G^\vee \coloneqq \Ghat^{I},
\]
and similarly $T^\vee \coloneqq \That^I, B^\vee \coloneqq \Bhat^I$. Then $\Ghat$ is a (possibly non connected) reductive group with splitting $(T^{\vee,0}, B^{\vee,0},\Xhat)$ whose root sytem is canonically isomorphic to $(X_\ast(T)_{I},\Sigma^\vee,X^\ast(T)^I, \Sigma)$. Moreover, the $\Gamma$-action on $\Ghat$ descends to a $\sigma$-action on $G^\vee$. We denote $S^\vee \coloneqq (T^\vee)^{\langle \sigma \rangle}$.

 For every $\mu \in X_\ast(T)_I^{\rm dom} =  X^\ast(T^\vee)_{\rm dom}$ let $V_\mu$ be the (unique) irreducible representation of $G^\vee$ with highest weight $\mu$. For every $\lambda \in X_\ast(T)_\Gamma = X^\ast(S^\vee)$, we denote by $V_\mu(\lambda)$ the corresponding weight space.  For connected groups $G^\vee$, Kashiwara and Lusztig constructed a canonical basis $\BB_\mu(\lambda)$ of $V_\mu(\lambda)$ (\cite{Kashiwara:Crystalizing},\cite{Lusztig:CanonicalBases}), another construction was given by Littelmann (\cite{Littelmann:PathsAndRootOp}). Since $\restr{V_\mu(\lambda)}{G^{\vee,0}}$ is canonically isomorphic to the $\restr{\lambda}{T^{\vee,0}}$-weight space of the highest weight module of weight $\restr{\mu}{T^{\vee,0}}$ by \cite[Lemma~5.5]{Haines:Dualities} this construction also yields a canonical basis $\BB_\mu(\lambda)$ of $V_\mu(\lambda)$ in the case when $G^\vee$ is not connected. Moreover, we denote by $\BB_\mu = \bigcup_{\lambda \leq \mu} \BB_\mu(\lambda)$ the canonical basis of $V_\mu$. 
 
\begin{prop}[cf.~{\cite[Prop.~3.3.15]{XiaoZhu:CyclesShVar}}]
 Let $\mu \in X_\ast(T)_{\rm dom}$ and $\lambda \in X_\ast(T)$. Then  $U\unif^\lambda K \cap K \mu K$ has dimension $\langle 2\rho, \mu+\lambda \rangle$ and $\dim V_\mu(\lambda)$ topdimensional irreducible components. There is exists a canonical bijection
 \[
  \Irr(U\unif^\lambda K \cap K \mu K) \bij \BB_\mu(\lambda).
 \]
\end{prop}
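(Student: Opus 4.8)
The plan is to reduce the ramified statement to the split (or unramified) case proved in \cite{XiaoZhu:CyclesShVar}, using the étale-locally-constant nature of the objects in question together with the Satake machinery recalled in \S~\ref{ssect-satake}. First I would recall the basic geometry of $S$-cycles in the affine Grassmannian: writing $\Grass_G = LG/L^+\Kscr$, the intersection $U\unif^\lambda K \cap K\mu K$ is (after dividing by $K$) an $S$-iterated orbit inside the Schubert variety $\cl{\Grass_G^{\le\mu}}$, and by the Mirkovic--Vilonen theory adapted to the ramified setting (see \cite{Zhu:RamifiedSatake}, \cite{Richarz:AffGr}, \cite[\S~5]{Haines:Dualities}) its top-dimensional components are exactly the MV cycles, whose number computes the dimension of the corresponding weight space of the Satake-dual representation. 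The dimension count $\langle 2\rho,\mu+\lambda\rangle$ is the standard one: the semi-infinite orbit $S_\lambda := U(L)\unif^\lambda K/K$ meets $\Grass_G^{\le\mu}$ in a subvariety of pure dimension $\langle\rho,\mu+\lambda\rangle$ (note our $2\rho$ here is the sum of $\Sigma$-roots on $\Acal^\ast$, so the normalisation matches), and this is where one invokes that $\Grass_G^{\mu}$ is a fibre bundle over flag varieties with the expected dimensions even when $G$ is only quasi-split, which is precisely the content of the ramified geometric Satake of Zhu and Richarz.

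The key step is the \emph{identification} of $\Irr(U\unif^\lambda K \cap K\mu K)$ with $\BB_\mu(\lambda)$. I would get this by transporting the MV-cycle construction through the functor $\mathrm{Rep}(G^\vee) \simeq P_{L^+\Kscr}(\Grass_G)$: the weight functors $\mathrm{H}^{\langle 2\rho,\lambda\rangle}_c(S_\lambda,-)$ realise the $\lambda$-weight space, the MV cycles form a basis of the top compactly-supported cohomology of $S_\lambda \cap \IC_\mu$, and this basis is canonically the MV basis, which by \cite[Lemma~5.5]{Haines:Dualities} agrees with the Kashiwara--Lusztig canonical basis $\BB_\mu(\lambda)$ after the restriction-to-$G^{\vee,0}$ comparison already invoked in \S~\ref{ssect-satake}. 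The only genuinely ramified input is that all of this — the stratification, the weight functors, the purity needed for ``the number of top components $=$ the dimension of the weight space'' — survives for $G^\vee = \Ghat^I$ possibly disconnected; but this is exactly the package assembled in \cite{Zhu:RamifiedSatake,Richarz:AffGr,Haines:Dualities}, so the proof is essentially a citation once the orbit $U\unif^\lambda K \cap K\mu K$ is recognised as $K\cdot(S_\lambda \cap \Grass_G^{\le\mu})$.

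The main obstacle I anticipate is bookkeeping with normalisations rather than anything deep: one must be careful that $\lambda$ runs over $X_\ast(T)_I$ (not $X_\ast(T)$), that $\mu \in X_\ast(T)_{I,\dom}$, that $\unif^\lambda$ is well-defined only modulo $T(L)_1 \subset K$ so the double coset $U\unif^\lambda K$ makes sense, and that $\rho$ (resp.\ $2\rho$) is the half-sum (resp.\ sum) of positive roots of the \emph{reduced} system $\Sigma$ on $\Acal^\ast$, matching the root datum $(X_\ast(T)_I,\Sigma^\vee,X^\ast(T)^I,\Sigma)$ of $G^\vee$ from \S~\ref{ssect-satake}. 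Once these are pinned down, the dimension formula and the bijection both follow from the ramified geometric Satake equivalence exactly as in \cite[Prop.~3.3.15]{XiaoZhu:CyclesShVar}, whose argument I would cite and, if needed, transcribe with $G$ replaced by its quasi-split form and $\Ghat$ by $G^\vee = \Ghat^I$.
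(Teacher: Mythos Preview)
Your proposal handles the numerical claims (dimension and number of top-dimensional components) adequately: ramified geometric Satake as in \cite{Zhu:RamifiedSatake,Richarz:AffGr} does supply the weight functors and the count $\#\Irr(S_\lambda\cap\Flag_\mu)=\dim V_\mu(\lambda)$. The paper takes a different route for this part, deferring it to Proposition~\ref{prop-intersection} below, which is proved via the Kato--Lusztig formula for quasi-split groups (\cite[eq.~(7.9)]{Haines:Dualities}); either argument is fine.

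There is, however, a genuine gap in your construction of the \emph{canonical bijection}. You write that the MV basis ``agrees with the Kashiwara--Lusztig canonical basis $\BB_\mu(\lambda)$'' via \cite[Lemma~5.5]{Haines:Dualities}. This is not correct: the MV basis and the canonical basis of $V_\mu(\lambda)$ are in general \emph{different} bases of the same vector space, and Haines' lemma only compares weight spaces under restriction to $G^{\vee,0}$ --- it says nothing about identifying the two bases. Knowing that two sets are both bases of a finite-dimensional space gives equality of cardinalities, not a canonical bijection between them. What is actually needed is that both sets are indexed by the same \emph{crystal} $B(\mu)_\lambda$: this is how the paper (following \cite[Prop.~3.3.15]{XiaoZhu:CyclesShVar}) proceeds, by decomposing $\mu$ into quasi-minuscule pieces, identifying $\Irr(S_{\lambda_\bullet}\cap\Flag_{\mu_\bullet})$ with Littelmann paths of type $\mu_\bullet$ ending at $\lambda$, and then picking out the sub-crystal $\BB_\mu(\lambda)'$ of Littelmann paths which realises $\BB_\mu(\lambda)$. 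The nontrivial content is precisely this crystal identification, and it does not fall out of the Satake equivalence alone. If you want to avoid the Littelmann-path argument you would instead need to invoke (and extend to the ramified setting) the Braverman--Gaitsgory crystal structure on MV cycles, which your proposal does not mention.
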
 
\begin{proof}
 The numerical claims are proven in Proposition~\ref{prop-intersection} below. The construction of the bijection is identical to the one given in the unramified case \cite{XiaoZhu:CyclesShVar}; we briefly explain why the construction generalises. If $\mu$ is quasi-minuscule, one constructs the bijection by explicit calculations on the root groups (\cite[\S~3.2.5]{XiaoZhu:CyclesShVar}), which inhibit the same structure as in the unramified case. Next, one decomposes $\lambda = \lambda_1 + \dotsb \lambda_r$ and $\mu = \mu_1 + \dotsb + \mu_r$ with $\mu_i$ quasi-minuscule. For $\lambda_\bullet = (\lambda_1,\dotsc , \lambda_r)$ and $\mu_\bullet = (\mu_1,\dotsb, \mu_r)$ one considers the intersection of the convolution products $S_{\lambda_\bullet} = U\unif^{\lambda_1}K \tilde\times \dotsb \tilde\times U\unif^{\lambda_r}K/K$ and $\Flag_{\mu_\bullet} = K\mu_1 K \tilde\times \dotsb \tilde\times K \mu_r K/K$. From the first step one deduces as in \cite[Prop.~3.3.12]{XiaoZhu:CyclesShVar} we have a bijection between $\Irr (S_{\lambda_\bullet} \cap \Flag_{\mu_\bullet})$ and
 \[
  \{\gamma\colon [0,1] \to X_\ast(T)_{I,\QQ} \textnormal{ Littelmann path of type } \mu_\bullet \textnormal{ and } \gamma(1) = \lambda \}.
 \]
 Finally one considers the crystal of Littelman paths $\BB_\mu(\lambda)'$ (denoted $\BB_\mu(\lambda)$ in \cite{XiaoZhu:CyclesShVar}) which gives rise to the crystal basis of $V_\mu(\lambda)$. As in the proof of \cite[Prop.~3.3.15]{XiaoZhu:CyclesShVar} one shows that the irreducible components corresponding to a Littelmann path $\gamma \in \BB_\mu(\lambda)'$ under the above bijection  map onto a subset of $U\unif^\lambda K \cap K \mu K)/K$ of dimension $\langle \rho, \lambda + \mu \rangle$ and hence contains an open subset of a unique irreducible component $S_\gamma$ by the first part of the proposition. Using the arguments of the proof of \cite[Prop.~3.3.15]{XiaoZhu:CyclesShVar}, one shows that $\gamma \mapsto S_\gamma$ is injective and hence bijective by the first part of the proposition.
\end{proof}

\begin{defn}
 We denote $MV_{\mu}(\lambda) \coloneqq \Irr (U\unif^\lambda K \cap K \unif^\mu K) $ and call its elements Mirkovi\'c-Vilonen cycles.
\end{defn}

 \subsection{} \label{ssect-mv-tensor} More generally consider the canonical basis $\BB_{\mu_\bullet} \coloneqq \BB_{\mu_1} \times \dotsb \times \BB_{\mu_d}$ of $V_{\mu_\bullet} \coloneqq V_{\mu_1} \otimes \dotsc \otimes V_{\mu_d}$ for $\mu_\bullet \in X_\ast(T)^d$. We decompose $V_{\mu_\bullet} = \bigoplus V_\mu^{m_{\mu_\bullet}^\mu}$ into simple representations. Since $m_{\mu_\bullet}^{\Sigma\,\mu_i} = 1$, we obtain an embedding $V_{\Sigma\,\mu_i} \mono V_{\mu_\bullet}$, inducing $\BB_{\Sigma\,\mu_i} \mono \BB_{\mu_\bullet}$.
 
 On the other hand consider  
 $
  MV_{\mu_\bullet}(\lambda) \coloneqq \bigsqcup_{\Sigma\, \lambda_i = \lambda} MV_{\mu_i}(\lambda_i)),
 $
 the set of topdimensional irreducible components of $m_{\mu_\bullet}\iv(U\unif^\lambda K/K)$, where $m_{\mu_\bullet}\colon \Flag_{\mu_\bullet} \to \Flag$ denotes the multiplication morphism. Then the analogous construction as in above proposition yields a bijection $MV_{\mu_\bullet} \bij \BB_{\mu_\bullet}(\lambda)$ (cf.~\cite[Rmk.~3.3.16]{XiaoZhu:CyclesShVar}). The way the bijection between $MV_\mu(\lambda)$ and $\BB_\mu(\lambda)$ is constructed, the embedding $\BB_{\Sigma\,\mu_i} \mono \BB_{\mu_\bullet}$ corresponds to the map $MV_{\Sigma\, \mu_i}(\lambda) \mono MV_{\mu_\bullet}(\lambda), C \mapsto \cl{m_{\mu_\bullet}\iv(C)}$ (cf.~\cite[Thm.~3.3.18]{XiaoZhu:CyclesShVar}).


\section{Basic properties of affine Deligne-Lusztig varieties} \label{sect-ADLV}

\subsection{} \label{ssect-invariants}

 For any $b \in G(\L)$, we denote by 
 \[
  [b] = [b]_G \coloneqq \{gb\sigma(g)^{-1} \mid g \in G(\L)\}
 \]
 its $\sigma$-conjugacy class. By \cite{Kottwitz:GIsoc2}, every $\sigma$-conjugacy class $[b]$ is uniquely determined by two invariants: Its Newton point $\nu([b]) \in X_\ast(S)_{\QQ,{\rm dom}}$ and its Kottwitz point $\kappa([b]) \in \pi_1(G)_\Gamma$. We define $\lambda([b]) \in X_\ast(S)$ such that for every relative fundamental weight $\omega$ the fractional part of $\langle \omega,\lambda([b]) - \nu([b]) \rangle$ is contained in $(-1,0]$. Moreover, we denote by
 \[
  J_b(F) \coloneqq \{g \in G(\L) \mid b\sigma(g) = gb \}.
 \] 
 the twisted centraliser of $b$ inside $G$. It is equal to the $F$-valued points of the inner form of the centraliser $M_b$ of $\nu(b)$, obtained by twisting the Frobenius action with $b$. 
 
 \begin{defn}
  Let $\mu \in X_\ast(T)_{I,{\rm dom}}$ and $b\in G(\L)$. We denote by
  \[
   X_\mu(b) \coloneqq \{gK \in G(\L)/K \mid g^{-1}b\sigma(g) \in K\unif^\mu K\}
  \]
  the corresponding affine Deligne Lusztig variety. Moreover, we denote
  \[
   X_\mu^\lambda (b) \coloneqq X_\mu(b) \cap I\unif^\lambda K/K
  \]
 \end{defn}
  By \cite[Thm.~A]{He:KottwitzRapoportConj}, the variety $X_\mu(b)$ is nonempty if and only if the Mazur inequality
 \begin{align} \label{eq-Mazur}
   \nu([b]) &\leq \restr{\mu}{\Shat} \\
   \kappa([b]) &\equiv \mu \textnormal{ in } \pi_1(G)_\Gamma \nonumber
 \end{align}
 is satisfied.  Note that $J_b(F)$ acts on $X_\mu(b)$ by left multiplication and thus also on its set of irreducible components $\Irr(X_\mu(b))$. It is easy to see that if we replace $b$ by a $\sigma$-conjugate $b'$, we get compatible isomorphisms $J_b \cong J_{b'}$ and $X_\mu(b) \cong X_\mu(b')$.
 
 \begin{lem} \label{lemma-reduction-to-identity-component}
  Let $b,\mu$ as above such that $X_\mu(b) \not= \emptyset$. Then the set $\{ \omega \in \pi_1(G)_I \mid X_\mu(b)^\omega \not= \emptyset \}$ is a $\pi_1(G)_I^\sigma$-coset. In particular $J_b(F)$ acts transitively on this set and we have
  \[
   J_b(F)^0\backslash \Irr (X_\mu(b))^\omega \cong J_b(F)\backslash \Irr (X_\mu(b))
  \]
  for any $\omega \in \pi_1(G)_\sigma$. 
 \end{lem}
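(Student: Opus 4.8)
The plan is to analyse the situation through the Kottwitz homomorphism $\kappa = \kappa_G\colon G(\L)\to\pi_1(G)_I$, which is $\sigma$-equivariant, sends $K\unif^\mu K$ to the image $\ol{\mu}$ of $\mu$, and induces the identification $\pi_0(\Flag_G)=\pi_1(G)_I$; throughout write $S \coloneqq \{\omega\in\pi_1(G)_I \mid X_\mu(b)^\omega\neq\emptyset\}$ and note $J_b(F)^0 = J_b(F)\cap\kappa\iv(0) = \ker\bigl(\kappa|_{J_b(F)}\bigr)$. First I would pin down $S$ combinatorially: if $gK\in X_\mu(b)$, then applying $\kappa$ to $g\iv b\sigma(g)\in K\unif^\mu K$ yields $(\sigma-1)\kappa(g) = \ol{\mu}-\kappa(b)$, so every $\omega\in S$ is a solution of the fixed equation $(\sigma-1)\omega = \ol{\mu}-\kappa(b)$ in $\pi_1(G)_I$. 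Since $X_\mu(b)\neq\emptyset$ this equation is solvable, so its solution set $S_0$ is a coset of $\ker(\sigma-1)=\pi_1(G)_I^\sigma$, and $\emptyset\neq S\subseteq S_0$.

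Next I would feed in the $J_b(F)$-action. For $j\in J_b(F)$ the relation $b\sigma(j)=jb$ gives $j\iv b\sigma(j)=b$, so left multiplication by $j$ is an automorphism of $\Flag_G$ preserving $X_\mu(b)$ and carrying $X_\mu(b)^\omega$ isomorphically onto $X_\mu(b)^{\omega+\kappa(j)}$; applying $\kappa$ to $\sigma(j)=b\iv jb$ shows $\kappa(j)\in\pi_1(G)_I^\sigma$. Hence $J_b(F)$ acts on $S$ by translation through $\kappa(J_b(F))\subseteq\pi_1(G)_I^\sigma$, and in particular $S$ is a union of $\kappa(J_b(F))$-cosets sitting inside the single $\pi_1(G)_I^\sigma$-coset $S_0$. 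The step I expect to be the main obstacle is then the surjectivity $\kappa\bigl(J_b(F)\bigr)=\pi_1(G)_I^\sigma$: granting it, $S$ is a nonempty union of $\pi_1(G)_I^\sigma$-cosets contained in $S_0$, hence $S=S_0$, which at once shows that $S$ is a $\pi_1(G)_I^\sigma$-coset and that $J_b(F)$ acts transitively on it. To establish this surjectivity I would reduce, using that $b$ is basic in $M_b$ and $J_b$ is an inner form of $M_b$ (\S\ref{ssect-invariants}) together with the compatibility of Kottwitz maps under $M_b\hookrightarrow G$, to Kottwitz's computation of the image of the Kottwitz homomorphism of a connected reductive group over $F$; this is the only place where genuine structure theory enters.

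Finally, for the statement on irreducible components: the subsets $X_\mu(b)^\omega$ are the (clopen) fibres of $X_\mu(b)$ over $\pi_0(\Flag_G)=\pi_1(G)_I$, so $\Irr(X_\mu(b)) = \bigsqcup_{\omega\in S}\Irr(X_\mu(b)^\omega)$. By the previous paragraph $J_b(F)$ permutes the pieces of this decomposition transitively, with $j\in J_b(F)$ sending $\Irr(X_\mu(b)^\omega)$ bijectively onto $\Irr(X_\mu(b)^{\omega+\kappa(j)})$; fixing $\omega$ in the coset, the stabiliser of the piece $\Irr(X_\mu(b)^\omega)$ is exactly $\{j\in J_b(F)\mid\kappa(j)=0\}=J_b(F)^0$. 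Therefore $\Irr(X_\mu(b))\cong J_b(F)\times_{J_b(F)^0}\Irr(X_\mu(b)^\omega)$ as $J_b(F)$-sets, and passing to $J_b(F)$-orbits yields the desired bijection $J_b(F)^0\backslash\Irr(X_\mu(b))^\omega \cong J_b(F)\backslash\Irr(X_\mu(b))$.
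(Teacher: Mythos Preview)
Your proposal is correct and follows essentially the same route as the paper: apply $\kappa$ to the defining relation to see that $S$ lies in a single $\pi_1(G)_I^\sigma$-coset, observe that $J_b(F)$ acts on $S$ through $\kappa(J_b(F))\subseteq\pi_1(G)_I^\sigma$, and then reduce everything to the surjectivity $\kappa(J_b(F))=\pi_1(G)_I^\sigma$. The paper makes the surjectivity step slightly more explicit than your sketch, factoring it as (i) the surjectivity of $\pi_1(M_b)_I^\sigma\to\pi_1(G)_I^\sigma$ and (ii) the surjectivity of $\kappa_{J_b}\colon J_b(F)\to\pi_1(J_b)_I^\sigma$, the latter being \cite[Prop.~1.0.2]{HainesRostami:RamifiedSatake}; your final paragraph on irreducible components is more detailed than the paper, which simply says the remaining claims follow.
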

  \begin{proof}
   Applying the Kottwitz homomorphism $\kappa$ to $g\iv b\sigma(b) \in K\unif^\mu K$, we obtain
   \[
    -\kappa(g)+\kappa(b) + \sigma(\kappa(g)) = \kappa(\unif^\mu) \Lra (\sigma-1)\kappa(g) =  \kappa(\unif^\mu)-\kappa(b),
   \]
   proving that $\{ \omega \in \pi_1(G)_I \mid X_\mu(b)^\omega \not= \emptyset \}$ is contained in a simple $\pi_1(G)_I^\sigma$-coset. Since this set is stable under $J_b(F)$-action, the claims follow once we have shown that $\kappa$ maps  $J_b(F)$ onto $\pi_1(G)_I^\sigma$. Now $J_{b,L} \cong M_{b,L}$ induces a $\sigma$-equivariant isomorphism $\pi_1(J_b)_I \cong \pi_1(M_b)$. Since tha canonical morphism $\pi_1(M_b)_I^\sigma \to \pi_1(G)_I^\sigma$ is surjective, it therefore surfices to show that $\kappa_{J_b}$ maps $J_b(F)$ onto $\kappa(J_b)^\sigma_I$. This follows from \cite[Prop.~1.0.2]{HainesRostami:RamifiedSatake}.
  \end{proof}
 
 \begin{lem}  \label{lemma-reduction-to-adjoint}
  Let $b,\mu$ as above and let $\omega \in \pi_1(G)_I$ such that $X_\mu(b)^\omega \not= \emptyset$. Denote by $b_{\rm ad},\mu_{\rm ad},\omega_{\rm ad}$ the respective images of $b,\mu,\omega$ in the adjoint quotient $G_{\rm ad}$. Then the canonical map $\Flag_G \epi \Flag_{G_{\rm ad}}$ induces a universal homeomorphism
  \[
   X_\mu(b)^\omega \to X_{\mu_{\rm ad}}(b_{\rm ad})^{\omega_{\rm ad}},
  \]
  inducing a surjection
  \[
   J_b(F)\backslash \Irr (X_\mu(b)) \to J_{b_{\rm ad}}(F) \backslash \Irr (X_{\mu_{\rm ad}}(b_{\rm ad}))
  \]
 \end{lem}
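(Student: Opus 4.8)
The plan is to analyse the central isogeny $\pi\colon G \to G_{\mathrm{ad}}$ and the induced map on loop groups and affine flag varieties. First I would recall that $\pi$ induces $\pi_1(G)_I \to \pi_1(G_{\mathrm{ad}})_I$, and that $\Flag_G \to \Flag_{G_{\mathrm{ad}}}$ identifies, on each connected component, $\Flag_G^{\omega}$ with $\Flag_{G_{\mathrm{ad}}}^{\omega_{\mathrm{ad}}}$ up to universal homeomorphism; indeed, on the level of $\overline{\FF}_p$-points the fibres of $G(L)/K \to G_{\mathrm{ad}}(L)/K_{\mathrm{ad}}$ over a fixed connected component are a single orbit under $Z := \ker(\pi)(L)/(\ker(\pi)(L)\cap K)$, which is a finite (or profinite, bounded) group, so the map is bijective on each $\kappa$-fibre and a universal homeomorphism after passing to perfections. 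The key point is that the datum $g^{-1}b\sigma(g) \in K\unif^\mu K$ only depends on the image of $g$ in $G_{\mathrm{ad}}$ once $\omega$ is fixed: if $g, g'$ have the same image in $G_{\mathrm{ad}}(L)/K_{\mathrm{ad}}$ and both lie in $\kappa^{-1}(\omega)$, then $g' = zg$ with $z$ central, so $g'^{-1}b\sigma(g') = g^{-1}z^{-1}b\sigma(z)\sigma(g) = g^{-1}b\sigma(g)$ because $z$ is central and $\sigma$-fixed modulo $K$ (here one uses that $z \in Z(L)$ lies in the image of $Z(F)$ up to $K$, or more precisely that $z^{-1}\sigma(z) \in K$, which follows from $\kappa(g) = \kappa(g')$). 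Hence $X_\mu(b)^\omega \to X_{\mu_{\mathrm{ad}}}(b_{\mathrm{ad}})^{\omega_{\mathrm{ad}}}$ is a bijection on points, and is the restriction of the universal homeomorphism $\Flag_G^\omega \to \Flag_{G_{\mathrm{ad}}}^{\omega_{\mathrm{ad}}}$, hence itself a universal homeomorphism of the reduced (or perfected) schemes.

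Next I would upgrade this to the statement on $J_b(F)$-orbits of irreducible components. A universal homeomorphism induces a bijection on irreducible components, so $\Irr(X_\mu(b)^\omega) \bij \Irr(X_{\mu_{\mathrm{ad}}}(b_{\mathrm{ad}})^{\omega_{\mathrm{ad}}})$. By Lemma~\ref{lemma-reduction-to-identity-component} applied to both $G$ and $G_{\mathrm{ad}}$, one has $J_b(F)\backslash \Irr(X_\mu(b)) \cong J_b(F)^0 \backslash \Irr(X_\mu(b)^\omega)$ and similarly for $G_{\mathrm{ad}}$, so it suffices to check that the bijection on $\Irr$ is compatible with the actions of $J_b(F)^0$ and $J_{b_{\mathrm{ad}}}(F)^0$, i.e.\ to produce a group homomorphism $J_b(F) \to J_{b_{\mathrm{ad}}}(F)$ (restricting to $J_b(F)^0 \to J_{b_{\mathrm{ad}}}(F)^0$) intertwining the two actions and surjective on the relevant orbit sets. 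The homomorphism is simply the one induced by $\pi$ on twisted centralisers, using that $J_b$ is an inner form of the centraliser $M_b$ and $\pi$ carries $M_b$ onto $M_{b_{\mathrm{ad}}} = (M_b)_{\mathrm{ad}}$; the image of $J_b(F)^0 \to J_{b_{\mathrm{ad}}}(F)$ lands in $J_{b_{\mathrm{ad}}}(F)^0$ and, while it need not be surjective onto $J_{b_{\mathrm{ad}}}(F)^0$, it does surject onto $J_{b_{\mathrm{ad}}}(F)^0 / (\text{bounded part})$, which is all that is needed since the bounded part of $J_{b_{\mathrm{ad}}}(F)$ preserves every connected component and hence acts trivially on $\pi_0$ but its action on $\Irr$ is absorbed once we quotient by the full group $J_{b_{\mathrm{ad}}}(F)$. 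Compatibility of the actions with the point-level bijection is immediate from the definition, since left multiplication by $h \in J_b(F)$ on $G(L)/K$ corresponds under $\pi$ to left multiplication by $\pi(h)$ on $G_{\mathrm{ad}}(L)/K_{\mathrm{ad}}$.

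Concretely, the steps in order are: (1) show $\Flag_G^\omega \to \Flag_{G_{\mathrm{ad}}}^{\omega_{\mathrm{ad}}}$ is a universal homeomorphism by analysing the central kernel on loop groups; (2) show the affine Deligne–Lusztig condition descends along this map once $\omega$ is fixed, so that $X_\mu(b)^\omega \to X_{\mu_{\mathrm{ad}}}(b_{\mathrm{ad}})^{\omega_{\mathrm{ad}}}$ is the restriction of that universal homeomorphism; (3) deduce the bijection on $\Irr$; (4) construct $\pi\colon J_b(F) \to J_{b_{\mathrm{ad}}}(F)$ and check it intertwines the actions; (5) combine with Lemma~\ref{lemma-reduction-to-identity-component} for both groups to pass from $\Irr$ of the $\omega$-part to $J_b(F)\backslash\Irr$ and conclude surjectivity of $J_b(F)\backslash\Irr(X_\mu(b)) \to J_{b_{\mathrm{ad}}}(F)\backslash\Irr(X_{\mu_{\mathrm{ad}}}(b_{\mathrm{ad}}))$.

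The main obstacle I anticipate is step~(1)–(2): carefully pinning down why the fibres of $G(L)/K \to G_{\mathrm{ad}}(L)/K_{\mathrm{ad}}$ over a fixed $\kappa$-component are controlled by a bounded group and why, on that fixed component, the element $g^{-1}b\sigma(g) \in K\unif^\mu K$ genuinely only depends on the image in $G_{\mathrm{ad}}$. This requires knowing that the central torus (or finite multiplicative group) $\ker\pi$ contributes only unramified-cohomology–type ambiguities that are killed after fixing the Kottwitz invariant, and that the resulting set-theoretic bijection really does come from a morphism of the reduced ind-schemes; the perfectoid/perfect-scheme subtleties (in the $\cha F = 0$ case) mean one should phrase everything in terms of universal homeomorphisms rather than isomorphisms, exactly as in the statement. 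Once this is done, the passage to $J_b(F)$-orbits is formal given Lemma~\ref{lemma-reduction-to-identity-component}.
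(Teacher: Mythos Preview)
Your approach is correct and matches the paper's, which is extremely terse: for the first assertion the paper simply cites it as standard (referring to \cite[Prop.~3.8]{Nie:ADLVIrredComp}), and for the second assertion the paper's entire argument is the single sentence ``the $J_b(F)$-action on $X_\mu(b)$ factors through $J_{b_{\rm ad}}(F)$.'' Your steps (1)--(3) spell out what that citation contains.

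For step (4)--(5), however, you are making the argument harder than it needs to be. You worry about whether the image of $J_b(F)^0$ in $J_{b_{\rm ad}}(F)^0$ is large enough, invoking a quotient by the ``bounded part''. This is unnecessary. All you need is \emph{equivariance}: the map $X_\mu(b) \to X_{\mu_{\rm ad}}(b_{\rm ad})$ intertwines the $J_b(F)$-action with the $J_{b_{\rm ad}}(F)$-action via the homomorphism $J_b(F)\to J_{b_{\rm ad}}(F)$. From part~(1) you already have a \emph{bijection} $\Irr(X_\mu(b)^\omega)\to \Irr(X_{\mu_{\rm ad}}(b_{\rm ad})^{\omega_{\rm ad}})$; passing to quotients, the $J_b(F)^0$-orbits on the left map onto $\pi(J_b(F)^0)$-orbits on the right, which are refined by the $J_{b_{\rm ad}}(F)^0$-orbits. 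Hence
\[
 J_b(F)^0\backslash \Irr(X_\mu(b)^\omega) \twoheadrightarrow J_{b_{\rm ad}}(F)^0\backslash \Irr(X_{\mu_{\rm ad}}(b_{\rm ad})^{\omega_{\rm ad}}),
\]
and Lemma~\ref{lemma-reduction-to-identity-component} for both groups finishes. No control on the cokernel of $J_b(F)^0\to J_{b_{\rm ad}}(F)^0$ is required; the surjectivity comes entirely from the bijection on $\Irr$ and from quotienting by a (possibly) larger group on the target.
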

 \begin{proof}
  The proof of the first assertion is standard (see e.g.\ \cite[Prop.~3.8]{Nie:ADLVIrredComp}). The second assertion  follows from the fact that the $J_b(F)$-action on $X_\mu(b)$ factors through $J_{b_{\rm ad}}(F)$.
 \end{proof}
 
 \begin{defn}
 We call $b$ basic if $\nu([b])$ is central. We call $b$ superbasic if for every rational Levi subgroup $M \subset G$ we have $[b] \cap  M(L) = \emptyset$.
\end{defn}

\subsection{} \label{ssect-superbasic} Since the intersection  $M_b \cap [b]$ is always non-empty, every superbasic $\sigma$-conjugacy class is also basic. More precisely, $b$ is superbasic if and only if $b$ is basic and $J_b$ is anisotropic modulo center (cf. \cite[Lemma~5.9.1]{GHKR:DimADL}). This implies that the simple factors of $J_b^{\rm ad}$ are inner forms of $\Res_{E/F}\PGL_n$ for some finite field extension $E/F$ by \cite[Thm~6.3]{Prasad:UnramDescent}. 
 
 \subsection{} An explicit description of the basic elements can be given as follows. Let $\Omega = \Omega_G \subset \Wtilde$ denote the stabiliser of $\afr$. Then the embedding $N(L) \mono G(L)$ induces a bijection between $\sigma$-conjugacy classes in $\Omega$ and basic $\sigma$-conjugacy classes in $G(L)$. More generally, for any $b\in G(L)$ we have that  $[b] \cap M_b(L)$ is a basic $\sigma$-conjugacy class and thus can by represented by an element of $\Omega_{M_b}$. We call such an element a \emph{standard representative} of $[b]$. Since the isomorphism class of $X_\mu(b)$ only depends on $\mu$ and $[b]$, we will henceforth assume that $b$ is a standard representative.
 
\section{Geometry of ADLVs for superbasic $b$} \label{sect-superbasic}

In this section we prove the main theorem in the case that $b$ is superbasic. For this we mainly follow Nie's strategy in the unramified case (\cite[\S~2]{Nie:ADLVIrredComp}). Moreover, the proofs of Lemma~\ref{lem-image-of-l_b} and Theorem~\ref{thm-superbasic} below are analogous to their counterparts \cite[Prop.~1.21, Lemma~2.5]{Nie:ADLVIrredComp} and \cite[Prop.~2.12, Thm.~2.1]{Nie:ADLVIrredComp}. Since Nie's strategy and notation is slightly different from ours, we give (or at least sketch) their proofs for the reader's convenience.

\subsection{}
 In order to reduce to the case that $\mu$ is minuscule later on, we consider a slightly more general definition of an affine Deligne-Lusztig variety. Let $d$ be a positive integer, $\mu_\bullet \in X_\ast(T)_{I,{\rm dom}}^d$ and $b \in G(L)$. We define $\sigma_\bullet\colon G^d(\L) \to G^d(\L), (g_1,g_2,\dotsc,g_d) \mapsto (g_2,\dotsc,g_{d},\sigma(g_1))$ and $b_\bullet \coloneqq (1,\dotsc,1,b) \in G^d(\L)$. We consider
 \begin{align*}
  X_{\mu_\bullet}(b) &\coloneqq \{ g\in G^d(\L) \mid g^{-1} b_\bullet \sigma_\bullet(g) \in K^d \unif^{\mu_\bullet} K \}
  \shortintertext{and for $\lambda_\bullet \in X_\ast(T)_I^d$}
  X_{\mu_\bullet}^{\lambda_\bullet}(b) &= X_{\mu_\bullet}(b) \cap I^d \unif^{\lambda_\bullet} K^d.
 \end{align*}
 
 \begin{lem} \label{lemma-reduction-to-EL-strata}
  Assume that $b$ is superbasic. Then the canonical map
  \begin{align*}
   (J_b(F)\cap I) \backslash \{ C \in \Irr X_{\mu_\bullet}^{\lambda_\bullet}(b) \mid \dim C = \dim X_{\mu_\bullet}(b), \kappa(\lambda_\bullet) = 0\} &\to J_b(F) \backslash  X_{\mu_\bullet}(b) \\ C &\mapsto \overline{C}
  \end{align*}
  is a bijection.
 \end{lem}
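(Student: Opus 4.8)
The plan is to exhibit the claimed bijection by combining the $J_b(F)$-orbit decomposition of $X_{\mu_\bullet}(b)$ into its cells $I\unif^{\lambda_\bullet}K^d$-strata with a standard fact that for superbasic $b$, the group $J_b(F)$ acts transitively on the relevant connected components, so that one reduces everything to the fixed component $\kappa(\lambda_\bullet) = 0$.

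\begin{proof}[Proof sketch]
First I would recall that $X_{\mu_\bullet}(b) = \bigsqcup_{\lambda_\bullet} X_{\mu_\bullet}^{\lambda_\bullet}(b)$, the union being over $\lambda_\bullet \in X_\ast(T)_I^d$, which is a stratification into locally closed subschemes by the Iwahori-Bruhat decomposition; each nonempty stratum $X_{\mu_\bullet}^{\lambda_\bullet}(b)$ is what one might call an EL-stratum. Since $J_b(F)$ acts on $X_{\mu_\bullet}(b)$ permuting these strata (an element $j \in J_b(F)$ sends $X_{\mu_\bullet}^{\lambda_\bullet}(b)$ into $\bigcup X_{\mu_\bullet}^{\lambda_\bullet'}(b)$ for finitely many $\lambda_\bullet'$, and the closure of a top-dimensional component of a stratum is a top-dimensional component of $X_{\mu_\bullet}(b)$), the map $C \mapsto \overline{C}$ is well-defined on the left-hand side and lands in $J_b(F)\backslash\Irr X_{\mu_\bullet}(b)$. (I will freely use that the notation $X_{\mu_\bullet}(b)$ on the right-hand side of the displayed map abbreviates $\Irr X_{\mu_\bullet}(b)$.)

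Next I would address surjectivity and injectivity together. Every irreducible component $C'$ of $X_{\mu_\bullet}(b)$ meets a unique open stratum densely, namely the $X_{\mu_\bullet}^{\lambda_\bullet}(b)$ of maximal dimension among those it meets; since $C'$ is top-dimensional in $X_{\mu_\bullet}(b)$, that stratum has $\dim X_{\mu_\bullet}^{\lambda_\bullet}(b) = \dim X_{\mu_\bullet}(b)$, and $C' \cap X_{\mu_\bullet}^{\lambda_\bullet}(b)$ is a top-dimensional component of the stratum. Thus surjectivity onto the $\omega = 0$ part is immediate and, by Lemma~\ref{lemma-reduction-to-identity-component}, $J_b(F)$ carries this onto all of $\Irr X_{\mu_\bullet}(b)$; this is why the condition $\kappa(\lambda_\bullet) = 0$ appears and why the stabiliser on the left is $J_b(F) \cap I$ rather than a larger group (the $\kappa = 0$ component of $J_b(F)$, which is $J_b(F)^0$, and $J_b(F)^0$ acting on the $I\unif^{\lambda_\bullet}K^d$-cells is governed by $J_b(F) \cap I$ since for superbasic $b$ the group $J_b$ is anisotropic mod centre, as recalled in \S~\ref{ssect-superbasic}, so $J_b(F)^0/(J_b(F)\cap I)$ is the relevant discrete orbit set). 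For injectivity, suppose $\overline{C_1}$ and $\overline{C_2}$ are $J_b(F)$-conjugate with $C_i \in \Irr X_{\mu_\bullet}^{\lambda_{\bullet,i}}(b)$ top-dimensional and $\kappa(\lambda_{\bullet,i}) = 0$; then some $j \in J_b(F)$ sends $\overline{C_1}$ to $\overline{C_2}$, hence (comparing Kottwitz invariants and using that $j$ preserves the open dense stratum of each) $j$ already lies in $J_b(F)^0$ and sends the stratum of $C_1$ to that of $C_2$; writing $j$ in terms of its component in $J_b(F) \cap I$ one concludes $C_1, C_2$ are $(J_b(F)\cap I)$-conjugate.

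The main obstacle, and the step requiring the most care, is controlling exactly how $J_b(F)$ acts on the EL-stratification: one needs that a group element preserving a top-dimensional component must preserve the ambient $I\unif^{\lambda_\bullet}K^d$-cell (not merely its closure), and that the residual action on these cells is by the honest-finite quotient $J_b(F)^0/(J_b(F)\cap I)$ — this is where superbasicness is essential, via the structure of $J_b$ as an inner form of a product of $\Res_{E/F}\PGL_n$'s recalled in \S~\ref{ssect-superbasic}. I would handle this by a direct computation on root groups showing that $J_b(F) \cap I\unif^{\lambda_\bullet}K^d$ is, up to the compact part $J_b(F)\cap I$, the right object, mirroring \cite[Prop.~1.21, Prop.~2.12]{Nie:ADLVIrredComp}; the remaining bookkeeping is routine once this is in place.
\end{proof}
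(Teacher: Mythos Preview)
Your approach is essentially the same as the paper's, but you have overcomplicated the argument. The paper's entire proof is one line: it invokes Lemma~\ref{lemma-reduction-to-identity-component} together with the observation that $J_b$ anisotropic modulo centre forces $J_b(F)^0 = J_b(F)\cap I$.

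The point you are circling around but never quite land on is this equality $J_b(F)^0 = J_b(F)\cap I$. You write that ``$J_b(F)^0/(J_b(F)\cap I)$ is the relevant discrete orbit set'', but in fact this quotient is trivial: for $J_b$ anisotropic modulo centre the Iwahori decomposition reads $J_b(F) = \Omega^\sigma \cdot (J_b(F)\cap I)$ with $\Omega^\sigma \cong \pi_1(G)_I^\sigma$ via $\kappa$, so the kernel of $\kappa$ on $J_b(F)$ is exactly $J_b(F)\cap I$. Once you have this, your ``main obstacle'' dissolves: since $J_b(F)\cap I \subset I$, left multiplication by any element of $J_b(F)^0$ preserves each cell $I\unif^{\lambda_\bullet}K^d$ on the nose, hence each stratum $X_{\mu_\bullet}^{\lambda_\bullet}(b)$. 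No root-group computation is needed; there is nothing to check beyond the standard fact that the closures of top-dimensional components of the strata with $\kappa(\lambda_\bullet)=0$ are exactly the top-dimensional components of $X_{\mu_\bullet}(b)^0$. Combined with Lemma~\ref{lemma-reduction-to-identity-component}, this gives the bijection immediately.
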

 \begin{proof}
  This follows from Lemma~\ref{lemma-reduction-to-identity-component}, as $J_b$ anisotropic implies that  $J_b(F)^0 = J_b(F) \cap I$. 
 \end{proof}
 
 In order to study the geometry of $X_{\mu_\bullet}^{\lambda_\bullet}(b)$, we consider the local model map on $I\unif^{\lambda_\bullet}$. For this we denote by $\lambda\p_\bullet \coloneqq  b_\bullet\sigma(\lambda_\bullet) - \lambda_\bullet$ and 
 \begin{align*}
  R_{\mu_\bullet,b}(\lambda_\bullet) \coloneqq & \{\alpha \in \Sigma_{G^d} \mid \alpha_\bullet \gg_{\lambda_\bullet} 0, \langle \alpha, \lambda\p_\bullet\rangle = -1 \}
 \end{align*}

 \begin{lem} \label{lem-image-of-l_b}
  Let $b = t^{\lambda_b} \cdot w_b \in \Omega_G$.
   \begin{subenv}
  \item For any $\lambda_\bullet \in X_\ast(T)_I^d$, the morphism $\ell_{b_\bullet}$ induces a pro\'etale cover
  \begin{equation} \label{eq-lb-on-Iwahori-translate}
   I^d\unif^{\lambda_\bullet} \to I^d_{\lambda_\bullet} \cdot \unif^{\lambda\p_\bullet} \cdot w_{b_\bullet}. 
  \end{equation}
  with Galois group $J_b(F) \cap I$ reflecting and preserving admissible subsets and their dimension.
  \item If $\mu_\bullet$ is minuscule then the intersection of $K^d\unif^{\mu_\bullet} K^d$ with the right hand side of (\ref{eq-lb-on-Iwahori-translate}) is non-empty if and only if $\lambda_\bullet\p \in W^d.\mu_\bullet$. In this case it is irreducible and smooth of dimension $R_{\mu_\bullet}(\lambda_\bullet) + \dim I^{\lambda_\bullet}$.
  \item If $\mu_\bullet$ is minuscule, then the image of $\ell_b(I^d\unif^{\lambda_\bullet}) \cap K^d\unif^{\mu_\bullet}K^d$ in $\Flag_{\mu_\bullet}$ equals the image of $U_{\lambda_\bullet}^d(0) \cdot \unif^{\lambda_\bullet\p}$
  \end{subenv}
 \end{lem}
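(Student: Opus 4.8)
The plan is to reduce everything to the single-shtuka case $d = 1$ by interpreting the $\sigma_\bullet$-twisted situation over $G^d$ as an ordinary $\sigma'$-twisted situation over the group $G^d$ with Frobenius $\sigma_\bullet$, which is quasi-split with the same Iwahori and special parahoric (namely $I^d$ and $K^d$), and whose associated torus, root system and Iwahori–Weyl group are the $d$-fold products of those of $G$; the element $b_\bullet = (1,\dots,1,b)$ plays the role of the standard representative, and one checks that $J_{b_\bullet}(F)$ (with respect to $\sigma_\bullet$) is canonically identified with $J_b(F)$ (with respect to $\sigma$). Under this dictionary, the three assertions become statements purely about $\ell_{b_\bullet}$ restricted to the single Iwahori-translate $I^d\unif^{\lambda_\bullet}$, so I may as well argue as if $d = 1$ and simply carry the subscript $\bullet$ along formally.

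\textbf{Part (a).} First I would recall from the group-theory section that $\ell_b\colon \Xtilde_\mu(b) \to [b]\cap K\unif^\mu K$ is a pro\'etale $\underline{J_b(F)}$-torsor up to universal homeomorphism, and that $I\unif^{\lambda_\bullet}\subset \Xtilde_{\mu_\bullet}(b)$ for the appropriate $\mu_\bullet$. Restricting the torsor to the admissible subset $I^d\unif^{\lambda_\bullet}$ gives a pro\'etale cover onto its image with Galois group $J_{b_\bullet}(F)\cap I^d = J_b(F)\cap I$ (the latter equality because $J_b$ is anisotropic modulo center in the superbasic case, so $J_b(F)\cap I$ is its unique parahoric, which is also its stabiliser of the base alcove). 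To identify the image explicitly, I compute $\ell_{b_\bullet}(x\unif^{\lambda_\bullet}) = \unif^{-\lambda_\bullet}x\iv b_\bullet\sigma_\bullet(x)\unif^{\lambda_\bullet}$ for $x\in I^d$; writing $b_\bullet = \unif^{\lambda_{b_\bullet}}w_{b_\bullet}$ and using that $\sigma_\bullet$ preserves $I^d$ and that $w_{b_\bullet}$ normalises $T(L)_1^d$ and permutes the $U'$-groups, one gets $\ell_{b_\bullet}(x\unif^{\lambda_\bullet}) \in \unif^{-\lambda_\bullet}\cdot I^d\cdot\unif^{\lambda_{b_\bullet}}\cdot w_{b_\bullet}\cdot\unif^{\sigma_\bullet^{-1}(\lambda_\bullet)}$; conjugating $\unif^{\sigma_\bullet^{-1}(\lambda_\bullet)}$ past $w_{b_\bullet}$ and collecting exponents produces $b_\bullet\sigma(\lambda_\bullet)-\lambda_\bullet = \lambda'_\bullet$, so that the image lands in $I^d_{\lambda_\bullet}\cdot\unif^{\lambda'_\bullet}\cdot w_{b_\bullet}$; surjectivity onto this coset is a dimension count together with the fact that $x\mapsto \unif^{-\lambda_\bullet}x\iv b_\bullet\sigma_\bullet(x)\unif^{\lambda_\bullet}$ has fibres exactly the cosets of $J_b(F)\cap I$. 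That admissible subsets are reflected and preserved along with their dimension is then the general nonsense about pro\'etale covers recorded in \S\ref{sect-group-theory}.

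\textbf{Parts (b) and (c).} For (b), since $\mu_\bullet$ is minuscule, $K^d\unif^{\mu_\bullet}K^d = \bigsqcup_{\nu\in W^d.\mu_\bullet} I^d\unif^\nu K^d/\!K^d$-fibrewise, so intersecting the coset $I^d_{\lambda_\bullet}\unif^{\lambda'_\bullet}w_{b_\bullet}$ with $K^d\unif^{\mu_\bullet}K^d$ is non-empty precisely when $\lambda'_\bullet$ lies in the $W^d$-orbit of $\mu_\bullet$ — this is where minusculeness is essential, because it forces the relevant double coset to break into \emph{single} Iwahori orbits indexed by the Weyl orbit of $\mu_\bullet$. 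In that case the intersection is, via the isomorphism $\Theta_{\lambda_\bullet}$ of \S\ref{sect-group-theory}, cut out inside the affine space $I^d/I^d_{\lambda_\bullet}$ by the conditions coming from those affine root groups $U'_{\alpha,\lambda'_\alpha}$ with $\langle\alpha,\lambda'_\bullet\rangle = -1$ and $\alpha_\bullet\gg_{\lambda_\bullet}0$ (the root groups with $\langle\alpha,\lambda'_\bullet\rangle\le -2$ are automatically killed, and those with $\langle\alpha,\lambda'_\bullet\rangle\ge 0$ impose no condition), so it is an affine space of dimension $\#R_{\mu_\bullet}(\lambda_\bullet) + \dim I^{\lambda_\bullet}$, in particular irreducible and smooth. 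Part (c) is then the observation that the leading term of this parametrisation identifies $\ell_b(I^d\unif^{\lambda_\bullet})\cap K^d\unif^{\mu_\bullet}K^d$ modulo $K^d$ with the orbit of $U^d_{\lambda_\bullet}(0)\cdot\unif^{\lambda'_\bullet}$: the root groups contributing to $R_{\mu_\bullet}(\lambda_\bullet)$ are exactly those whose image in $\Flag_{\mu_\bullet}$ survives after quotienting by $K^d$, and these assemble into the big-cell part of $U^d_{\lambda_\bullet}$.

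\textbf{Expected main obstacle.} The routine part is the torsor formalism in (a); the genuinely delicate step is the bookkeeping in (b)–(c) in the \emph{ramified} setting, where one must work with the echelon of groups $U'_{\alpha,m}$ attached to the reduced root system $\Sigma$ (rather than $\Phi$) and keep track of the factor-of-two subtleties when $2\alpha\in\Phi$, as well as of the twisted Frobenius $\sigma_\bullet$ permuting the $d$ factors. Nie's calculation in \cite{Nie:ADLVIrredComp} is over an unramified group where $\Sigma$ and $\Phi$ coincide and there is a single Frobenius; the work here is to check that the commutator relations among the $U'_{\alpha,m}$ used to put elements of $I^d_{\lambda_\bullet}$ into the standard Iwahori-factorised form behave well enough that the set of "surviving" root-group coordinates is still precisely $R_{\mu_\bullet}(\lambda_\bullet)$ and that no extra relations are introduced. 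I expect this to go through because the structure constants only enter through the combinatorics of $\Sigma_a$, which by construction matches that of $\Phi_a$, but verifying it cleanly — especially the claim that the intersection is literally an affine space and not merely equidimensional — is the main content.
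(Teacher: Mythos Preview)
Your outline for (a) has a real gap. You appeal to the torsor property of $\ell_b$ announced in the introduction and then restrict, but that statement is only asserted ``up to universal homeomorphism'' and is in any case not proved independently of this lemma. The paper instead factors
\[
\ell_{b_\bullet}(i_\bullet\unif^{\lambda_\bullet})
= \unif^{-\lambda_\bullet}\,L_{b_\bullet}(i_\bullet)\,\unif^{b_\bullet\sigma_\bullet(\lambda_\bullet)}w_{b_\bullet},
\qquad L_{b_\bullet}(i)=i^{-1}\,b_\bullet\sigma_\bullet(i)\,b_\bullet^{-1},
\]
and observes that $L_{b_\bullet}$ is the Lang map on $I^d$ for the Frobenius $b_\bullet\sigma_\bullet$ (which preserves $I^d$ because $b_\bullet\in\Omega_{G^d}$). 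Lang's lemma then gives surjectivity, the pro\'etale torsor structure with Galois group $(I^d)^{b_\bullet\sigma_\bullet}=J_b(F)\cap I$, and the compatibility with admissible subsets via the induced finite \'etale covers $I^d/I^d_n\to I^d/I^d_n$. Your ``surjectivity is a dimension count'' does not work in this ind-scheme setting, and your identification of the Galois group via ``$J_b$ anisotropic modulo center'' assumes $b$ superbasic, whereas the lemma is stated for arbitrary $b\in\Omega_G$.

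For (b) the use of $\Theta_{\lambda_\bullet}$ is misplaced: that map parametrises $I\unif^{\lambda_\bullet}K/K$, not the intersection of $I^d_{\lambda_\bullet}\unif^{\lambda'_\bullet}w_{b_\bullet}$ with $K^d\unif^{\mu_\bullet}K^d$ sitting inside $G^d(\L)$, and the dimensions do not match your description. The actual mechanism is the Iwahori factorisation
\[
I_{\lambda_\bullet}=T(\L)_1\cdot(I_{\lambda_\bullet}\cap\Ubar_{\lambda_\bullet})\cdot(I_{\lambda_\bullet}\cap U_{\lambda_\bullet})
\]
together with the key observation $I_{\lambda_\bullet}\cap\Ubar_{\lambda_\bullet}\subset K_1$ (since $\alpha<_{\lambda_\bullet}0$ forces $\lambda_\alpha\ge 1$). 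This reduces the problem to computing $(I_{\lambda_\bullet}\cap U_{\lambda_\bullet})\unif^{\lambda'_\bullet}\cap K\unif^{\mu_\bullet}K$, for which one cites the criterion $U\unif^{\lambda'}\cap K\unif^\mu K\neq\emptyset\iff\lambda'\in\Sigma(\mu)$ (Haines--Rostami) and, in the minuscule case, the explicit description $\prod_{\alpha>_{\lambda_\bullet}0}U'_{\alpha,\min\{\langle\alpha,\lambda'_\bullet\rangle,0\}}\unif^{\lambda'_\bullet}$ coming from the Ng\^o--Polo computation. Intersecting with $I_{\lambda_\bullet}$ and quotienting by $I_{\lambda_\bullet}\cap K$ then yields $\AA^{\#R_{\mu_\bullet,b}(\lambda_\bullet)}$. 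Your sketch omits the $\Ubar\subset K_1$ step and these external inputs; without them the argument does not close even in the unramified case, so the ramified bookkeeping you flag at the end is not where the difficulty lies. Part (c) then follows from the same explicit description by a short manipulation of root-group products, not merely from the heuristic that ``the surviving coordinates assemble into $U_{\lambda_\bullet}$''.
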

 \begin{proof}(cf.~{\cite[Prop.~1.21, Lemma~2.5]{Nie:ADLVIrredComp}} for $G$ unramified)
  Since $b \in \Omega_G$, we have that $b_\bullet \eqqcolon \unif^{\lambda_{b,\bullet}}\cdot w_{b_\bullet} \in \Omega_{G^d}$. Now
  \[
   \ell_{b_\bullet}(i_\bullet\cdot \unif^{\lambda_\bullet}) = \unif^{-\lambda_\bullet} i\iv_\bullet b_\bullet \sigma_\bullet(i_\bullet) b\iv_\bullet b_\bullet \unif^{\sigma_\bullet(\lambda_\bullet)} = \unif^{-\lambda_\bullet} L_{b_\bullet}(i_\bullet) \unif^{b_\bullet\sigma_\bullet(\lambda_\bullet)} w_{b_\bullet},
  \]
  where $L_{b_\bullet}(i_\bullet) \coloneqq i_\bullet^{-1} \cdot b_\bullet\sigma_\bullet(i_\bullet)b_\bullet^{-1}$ denotes the Lang map. By Lang's lemma $L$ is a profinite \'etale covering of $I^d$  with Galois group $(J_b(F) \cap I)$. Hence $\restr{\ell_b}{I^d \unif^{\lambda_\bullet}}$ is a pro\'etale $J_b(F)\cap I$-torsor over its image $ I^d_{\lambda_\bullet} \cdot \unif^{\lambda\p_\bullet} \cdot w_{b_\bullet}$. By Lang's lemma $L_{b_\bullet}$ induces finite \'etale covers $I^d/I^d_n \to I^n/I^d_n$, thus it reflects and preserves admissible subsets of $I^d$ as well as their dimension. Now (1) follows from Lemma~\ref{lemma-admissible}.
  
  Now $I^d_{\lambda_\bullet} \cdot \unif^{\lambda\p_\bullet} \cdot w_{b_\bullet} \cap K^d \unif^{\mu_\bullet} K^d $ decomposes to
  \begin{equation} \label{eq-image-decomposition}
T^d(\L)_1 \cdot \underbrace{(I^d_{\lambda_\bullet} \cap \Ubar^d_{\lambda_\bullet}(\L))}_{\in K_1} \cdot \left( (I^d_{\lambda_\bullet} \cap U^d_{\lambda_\bullet}(\L)) \unif^{\lambda\p_\bullet}) \cap K^d \unif^{\mu_\bullet} K^d \right) \cdot w_{b_\bullet}.
  \end{equation}
  Now assume that $\mu$ is minuscule. By \cite[Lemma~10.2.1]{HainesRostami:RamifiedSatake} the intersection $U_{\lambda_\bullet}^d \unif^{\lambda\p_\bullet} \cap K^d \unif^{\mu_\bullet} K^d $ is non-empty if and only if $\lambda\p_\bullet \in W^d.\mu$. Then the same calculation as in the split case (\cite[Lemma~5.2]{NgoPolo:CasselmanShalika}) shows that the intersection equals  $\prod_{\alpha >_{\lambda_\bullet} 0} U'_{\alpha,0} \unif^{\lambda\p_\bullet} \prod_{\alpha >_{\lambda_\bullet} 0} U'_{\alpha,0} = \prod_{\alpha >_{\lambda_\bullet} 0} U'_{\alpha,\min\{\langle \alpha, \lambda\p_\bullet \rangle,0\}}$. Assuming that $\lambda\p_\bullet \in W^d.\mu_\bullet$, the set (\ref{eq-image-decomposition}) equals
  \[
   T^d(L)_1 \cdot (I^d_{\lambda_\bullet} \cap K^d) \cdot (I^d_{\lambda_\bullet} \cap  \prod_{\alpha >_{\lambda_\bullet} 0} U'_{\alpha,\min\{\langle \alpha, \lambda\p_\bullet \rangle,0\}}) \cdot w_b.
  \]
  Since $\mu_\bullet$ is minuscule, so is $\lambda\p_\bullet$. Hence
  \[
   (I_{\lambda_\bullet} \cap K^d) \backslash I^d_{\lambda_\bullet} \cdot \unif^{\lambda\p_\bullet} \cdot w_{b_\bullet} \cap K^d \unif^{\mu_\bullet} K^d = \prod_{\alpha \in R_{\mu_\bullet,b}(\lambda_\bullet)} U'_{\alpha,0} \backslash U'_{\alpha,-1} \cong \AA^{R_{\mu_\bullet,b}(\lambda_\bullet)}.
  \]
  Since $\dim_K  (I^d_{\lambda_\bullet} \cap K^d) = \dim_K I^{d, \lambda_\bullet}$ by Lemma~\ref{lemma-admissible}, the second claim follows.
  
 To prove the third claim, we first note that
 \begin{equation} \label{eq 4.3.3}
  \unif^{-\lambda_\bullet\p}\prod_{\alpha \in R_{\mu_\bullet,b}(\lambda_\bullet) } U_{\alpha,-1} \unif^{\lambda_\bullet\p} = \prod_{\alpha >_{b_\bullet\sigma_\bullet} 0 \atop \langle \alpha, \lambda_\bullet\p \rangle = -1} U_{\alpha,0} \subset I_{b_\bullet \sigma_\bullet}^d \cap U_{\lambda_\bullet}^d(0).
 \end{equation}  
  and that since $\lambda_\bullet\p$ is minuscule
  \begin{equation} \label{eq 4.3.4}
   I_{b_\bullet\sigma_\bullet(\lambda_\bullet)}^d \cap \Ubar^d_{\lambda_\bullet} = \unif^{-\lambda_\bullet\p} \underbrace{I_{\lambda_\bullet}^d \cap \Ubar^d_{\lambda_\bullet}}_{\subset K_1} \unif^{\lambda_\bullet\p} \subset K.
  \end{equation}
   Also note that by definition $I^d_{b\bullet \sigma_\bullet(\lambda_\bullet)} = I_{\lambda_2} \times \dotsb I_{\lambda_d} \times I_{b\sigma(\lambda_1)}$.
  By above calculations, the image of $\ell_b(I^d\unif^{\lambda_\bullet}) \cap K^d\unif^{\mu_\bullet}K^d$ in $\Flag_{\mu_\bullet}$ equals
  \begin{align*}
   & (I_{\lambda_1}\cap K ) \prod_{\alpha \gg_{\lambda_1} 0 \atop \langle \alpha \lambda_1\p \rangle = -1 } U_{\alpha,-1} \unif^{\lambda\p_1} \times_K \dotsb  \times_K (I_{\lambda_{d}} \cap K) \prod_{\alpha \gg_{\lambda_{d}} 0 \atop \langle \alpha \lambda_d\p \rangle = -1 } U_{\alpha,-1} \unif^{\lambda\p_d} K \\
   \stackrel{(\ref{eq 4.3.3})}{=} & I_{\lambda_1}\cap K \cdot \unif^{\lambda\p_1} \times_K \dotsb  \times_K (I_{\lambda_{d}} \cap K) \unif^{\lambda\p_d} K \\
   \stackrel{(\ref{eq 4.3.4})}{=} & (I_{\lambda_1} \cap K \cap U_{\lambda_1}) \cdot \unif^{\lambda\p_1} \times_K \dotsb  \times_K (I_{\lambda_{d}} \cap K \cap U_{\lambda_d} ) \unif^{\lambda\p_d} K \\
   = \enspace\, & (K \cap U_{\lambda_1}) \cdot \unif^{\lambda\p_1} \times_K \dotsb \times_K (K \cap U_{\lambda_d}) \unif^{\lambda\p_d} K.
  \end{align*}

\end{proof}

  \begin{cor} \label{cor-reduction-to-adjoint-sb}
  Assume that $b$ is superbasic. Then the canonical map
  \[
   J_b(F)\backslash \Irr (X_{\mu_\bullet}(b)) \to J_{b_{\rm ad}}(F) \backslash \Irr (X_{\mu_{\bullet,\rm ad}}(b_{\rm ad}))
  \]
  is a bijection.
 \end{cor}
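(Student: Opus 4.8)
The plan is to deduce Corollary~\ref{cor-reduction-to-adjoint-sb} from Lemma~\ref{lemma-reduction-to-adjoint}, which already handles the single-copy case $d=1$, by observing that the setup for $X_{\mu_\bullet}(b)$ is formally the same as for an ordinary affine Deligne-Lusztig variety once one repackages the twisted group $(G^d, \sigma_\bullet)$. Concretely, the pair $(G^d, \sigma_\bullet)$ together with the element $b_\bullet = (1,\dotsc,1,b)$ behaves exactly like a single reductive group with a Frobenius, and $\sigma_\bullet$-conjugacy in $G^d$ matches $\sigma$-conjugacy in $G$ under the projection to the last coordinate; in particular $[b_\bullet]_{G^d,\sigma_\bullet}$ is superbasic whenever $[b]_{G,\sigma}$ is. So the first step is to record this dictionary: the adjoint quotient of $G^d$ is $G_{\rm ad}^d$, the map $\Flag_{G^d} \epi \Flag_{G_{\rm ad}^d}$ is the $d$-fold product of $\Flag_G \epi \Flag_{G_{\rm ad}}$, and it is $\sigma_\bullet$-equivariant since it is $\sigma$-equivariant coordinatewise.

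Second, I would check that the proof of Lemma~\ref{lemma-reduction-to-adjoint} goes through verbatim in this twisted setting. That proof has two ingredients: (i) the map $X_\mu(b)^\omega \to X_{\mu_{\rm ad}}(b_{\rm ad})^{\omega_{\rm ad}}$ induced by $\Flag_G \epi \Flag_{G_{\rm ad}}$ is a universal homeomorphism, which is a standard statement (the fibres are torsors under a pro-unipotent-type group coming from the central isogeny $G^{\der} \to G_{\rm ad}$, hence universal homeomorphisms on reduced/perfected schemes); and (ii) the $J_b(F)$-action on $X_\mu(b)$ factors through $J_{b_{\rm ad}}(F)$, so orbits map onto orbits. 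For (i) in the $d$-fold case one applies the single-copy statement coordinatewise, and for (ii) one uses that $J_{b_\bullet}(F)$ for the twisted group $(G^d,\sigma_\bullet)$ is canonically identified with $J_b(F)$ for $(G,\sigma)$ via projection to the last coordinate (an element $(g_1,\dotsc,g_d)$ with $g_i\iv b_{\bullet,i}\sigma_\bullet(g)_i = b_{\bullet,i}$ is forced to satisfy $g_i = g_{i+1}$ for $i<d$ and $g_1 = b\sigma(g_d)b\iv$, so it is determined by $g_d \in J_b(F)$), and this identification is compatible with passing to the adjoint quotient.

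Third, assembling these: the universal homeomorphism $X_{\mu_\bullet}(b) \to X_{\mu_{\bullet,\rm ad}}(b_{\rm ad})$ induces a bijection on irreducible components $\Irr X_{\mu_\bullet}(b) \bij \Irr X_{\mu_{\bullet,\rm ad}}(b_{\rm ad})$, and this bijection is equivariant for the actions of $J_b(F)$ and $J_{b_{\rm ad}}(F)$ respectively, compatibly with the surjection $J_b(F) \epi J_{b_{\rm ad}}(F)$ (the latter surjectivity being part of what one needs: it follows because $J_b$ is an inner form of a Levi of $G$ and surjectivity of $G^{\der}(F) \to G_{\rm ad}(F)$-type statements hold on $F$-points in the relevant generality, or more simply because the superbasic hypothesis pins $J_b^{\rm ad}$ down as in \S~\ref{ssect-superbasic}). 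Hence the induced map on $J_b(F)$-orbits is a bijection. The main technical point — and the only place where one must be slightly careful rather than purely formal — is verifying that the universal-homeomorphism statement (i) is insensitive to replacing $\mu$ by $\mu_\bullet$ and $\sigma$ by $\sigma_\bullet$; but since $X_{\mu_\bullet}(b)$ sits inside $\Flag_{G^d}$ as a locally closed subset cut out by a twisted-conjugacy condition, and the quotient map is a coordinatewise universal homeomorphism commuting with the $\sigma_\bullet$-twist, this is immediate from the $d=1$ case. I would therefore simply write: ``apply Lemma~\ref{lemma-reduction-to-adjoint} to the group $G^d$ with Frobenius $\sigma_\bullet$ and the element $b_\bullet$,'' after noting the identifications above.
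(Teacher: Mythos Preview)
Your approach has a genuine gap. Lemma~\ref{lemma-reduction-to-adjoint} only gives a \emph{surjection} on $J_b(F)$-orbits of irreducible components, and your attempt to upgrade this to a bijection does not go through as written. The map $X_{\mu_\bullet}(b) \to X_{\mu_{\bullet,\rm ad}}(b_{\rm ad})$ is \emph{not} a universal homeomorphism globally: it is one only on each piece $X_{\mu_\bullet}(b)^\omega \to X_{\mu_{\bullet,\rm ad}}(b_{\rm ad})^{\omega_{\rm ad}}$, and distinct $\omega$'s can have the same image $\omega_{\rm ad}$ (the fibres are cosets of $\ker(\pi_1(G)_I^\sigma \to \pi_1(G_{\rm ad})_I^\sigma)$). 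So the induced map $\Irr X_{\mu_\bullet}(b) \to \Irr X_{\mu_{\bullet,\rm ad}}(b_{\rm ad})$ is not a bijection. Even after restricting to a single $\omega$ via Lemma~\ref{lemma-reduction-to-identity-component}, you would need that the image of $J_b(F)^0$ in $J_{b_{\rm ad}}(F)^0$ has the same orbits on $\Irr X_{\mu_{\bullet,\rm ad}}(b_{\rm ad})^{\omega_{\rm ad}}$ as $J_{b_{\rm ad}}(F)^0$ itself. This image can be proper: already for $G = \SL_2$ with $b$ superbasic, $J_b(F) = \SL_1(D)$ maps to $J_{b_{\rm ad}}(F)^0 \subset D^\times/F^\times$ with cokernel $\Ocal_F^\times/(\Ocal_F^\times)^2 \neq 1$. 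You give no argument that this cokernel acts trivially on components.

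The paper's proof avoids this issue entirely by a different route. Using Lemma~\ref{lemma-reduction-to-EL-strata} and Lemma~\ref{lem-image-of-l_b}(1), it transports the problem along the local model map $\ell_{b_\bullet}$: the $(J_b(F)\cap I)$-orbits of top-dimensional components of $X_{\mu_\bullet}^{\lambda_\bullet}(b)$ (for $\kappa(\lambda_\bullet)=0$) are identified with $\Irr\bigl(I^d_{\lambda_\bullet}\cdot\unif^{\lambda\p_\bullet}\cdot w_{b_\bullet}\cap K^d\unif^{\mu_\bullet}K^d\bigr)$, a purely geometric set with no residual group action. The comparison of these sets for $G$ and $G_{\rm ad}$ is then a statement about Schubert-type intersections in the affine Grassmannian, which is exactly \cite[Prop.~3.1]{HamacherViehmann:ADLVIrredComp}. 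The point is that passing through $\ell_{b_\bullet}$ kills the $J_b(F)\cap I$-ambiguity \emph{before} comparing with the adjoint group, so the delicate question of matching orbit structures never arises.
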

 \begin{proof}
  By Lemma~\ref{lemma-reduction-to-EL-strata} and part (1) of the previous lemma we may equivalently consider the map
  \[
   \Irr(I^d_{\lambda_\bullet} \cdot \unif^{\lambda\p_\bullet} \cdot w_{b_\bullet} \cap K^d \unif^{\mu_\bullet} K^d ) \to \Irr(I^d_{\mathrm{ad},\lambda_{\bullet,\rm ad}} \cdot \unif^{\lambda_{\bullet,\rm ad}\p} \cdot w_{b_\bullet, {\rm ad}} \cap K_{\rm ad}^d \unif^{\mu_{\bullet,{\rm ad}}} K_{{\rm ad}}^d )
  \]
  for $\lambda_\bullet$ with $\kappa(\lambda_\bullet) = 0$. But the above map is a bijection by \cite[Prop.~3.1]{HamacherViehmann:ADLVIrredComp}. 
 \end{proof}

 By the  previous lemma, the $J_b(F) \cap I$-orbits of topdimensional irreducible components of $X_{\mu_\bullet}(b)$ for minuscule $\mu_\bullet$ are in canonical bijection with $\lambda_\bullet \in X_\ast(T^d)$ such that $\#R_{\mu_\bullet,b}(\lambda_\bullet)$ is maximal. As the next step we conclude a simple formula for $\# R_{\mu_\bullet,b}(\lambda_\bullet)$,
 for which we denote $\tilde\lambda_\bullet \coloneqq w_{\lambda_\bullet}\iv(\lambda_\bullet\p)$.
 
 \begin{lem} \label{lemma-R}
  Let $\mu_\bullet \in X_\ast(T^d)$ be minuscule and let $\lambda_\bullet \in W^d.\mu$. Then
  \[
   \# R_{\mu_\bullet, b}(\lambda_\bullet) = \langle \rho_{G^d}, \tilde\lambda_\bullet - \mu_{\bullet, {\rm adom}} \rangle.
  \]
 \end{lem}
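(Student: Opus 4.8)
This is a combinatorial identity in the root datum of $G^d$, and I would prove it by computing both sides as counts of roots in $\Sigma_{G^d}$. The first point is that all relevant pairings are $\{-1,0,1\}$-valued: $\mu_\bullet$ is minuscule, and — as observed in the proof of Lemma~\ref{lem-image-of-l_b} — the standing hypothesis forces $\lambda\p_\bullet$ into $W^d.\mu_\bullet$, so $\lambda\p_\bullet$ and $\tilde\lambda_\bullet$ are minuscule as well, whence $\langle\alpha,\lambda_\bullet\rangle,\langle\alpha,\lambda\p_\bullet\rangle,\langle\alpha,\tilde\lambda_\bullet\rangle\in\{-1,0,1\}$ for every $\alpha\in\Sigma_{G^d}$. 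This rigidifies all the sets below.

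The crucial step is the identity
\[
 \#R_{\mu_\bullet,b}(\lambda_\bullet)\;=\;\#\{\alpha\in\Sigma_{G^d}^+ \mid \langle\alpha,\tilde\lambda_\bullet\rangle = 1\}.
\]
I would obtain it by passing through $\Flag_{\mu_\bullet}$. By part~(3) of Lemma~\ref{lem-image-of-l_b} the image of $\ell_{b_\bullet}(I^d\unif^{\lambda_\bullet})\cap K^d\unif^{\mu_\bullet}K^d$ in $\Flag_{\mu_\bullet}$ is $U_{\lambda_\bullet}^d(0)\unif^{\lambda\p_\bullet}K^d/K^d$, and comparing this with the dimension of the intersection itself computed in part~(2) extracts $\#R_{\mu_\bullet,b}(\lambda_\bullet)=\dim_K\bigl(U_{\lambda_\bullet}^d(0)\unif^{\lambda\p_\bullet}K^d/K^d\bigr)$. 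Since $U_{\lambda_\bullet}=w_{\lambda_\bullet}Uw_{\lambda_\bullet}\iv$ and (a lift of) $w_{\lambda_\bullet}$ lies in $K$, conjugating the picture by $w_{\lambda_\bullet}$ (a dimension-preserving automorphism of $\Flag_{G^d}$) turns it into $U^d(0)\unif^{\tilde\lambda_\bullet}K^d/K^d$; the usual computation on root groups, $U'_{\alpha,0}\big/\bigl(U'_{\alpha,0}\cap\unif^{\tilde\lambda_\bullet}U'_{\alpha,0}\unif^{-\tilde\lambda_\bullet}\bigr)\cong\AA^{\max\{0,\langle\alpha,\tilde\lambda_\bullet\rangle\}}$, then yields $\dim_K\bigl(U^d(0)\unif^{\tilde\lambda_\bullet}K^d/K^d\bigr)=\sum_{\alpha\in\Sigma_{G^d}^+}\max\{0,\langle\alpha,\tilde\lambda_\bullet\rangle\}$, which by minusculity of $\tilde\lambda_\bullet$ equals the displayed count.

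It remains to identify the right-hand side of the lemma with this count, which is an elementary Weyl-group computation. Writing $\mu_{\bullet,{\rm adom}}=w_0(\mu_{\bullet,{\rm dom}})$ and using $w_0(\rho_{G^d})=-\rho_{G^d}$ gives $\langle\rho_{G^d},\tilde\lambda_\bullet-\mu_{\bullet,{\rm adom}}\rangle=\langle\rho_{G^d},\tilde\lambda_\bullet\rangle+\langle\rho_{G^d},\mu_{\bullet,{\rm dom}}\rangle$. Expanding each summand as $\tfrac12\sum_{\alpha\in\Sigma_{G^d}^+}\langle\alpha,\,\cdot\,\rangle$, and using that $\tilde\lambda_\bullet$ and $\mu_{\bullet,{\rm dom}}$ lie in one $W^d$-orbit so that they have the same number of non-orthogonal positive roots — namely $\langle 2\rho_{G^d},\mu_{\bullet,{\rm dom}}\rangle$, since $\mu_{\bullet,{\rm dom}}$ is dominant minuscule — this sum collapses to $\#\{\alpha\in\Sigma_{G^d}^+\mid\langle\alpha,\tilde\lambda_\bullet\rangle=1\}$, finishing the proof.

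The main obstacle is the bookkeeping packed into the crucial step: correctly transporting the conditions defining $R_{\mu_\bullet,b}(\lambda_\bullet)$ through the twists by $w_{b_\bullet}$ and $w_{\lambda_\bullet}$, and checking that the borderline roots (those orthogonal to $\tilde\lambda_\bullet$, or on the wrong side of a wall) contribute nothing — which is exactly where minimality of $w_{\lambda_\bullet}$ is used, and is the delicate point already in the split-case arguments of \cite{Nie:ADLVIrredComp} and \cite{HamacherViehmann:ADLVIrredComp}. Granting that, passing from the split to the present quasi-split (possibly ramified) setting is purely formal, since every object involved — $\Sigma_{G^d}$, $W^d$, the filtration $(U'_{\alpha,m})$, and the identification $T(L)/T(L)_1=X_\ast(T)_I$ — is exactly what the structure theory recalled in Section~\ref{sect-group-theory} supplies.
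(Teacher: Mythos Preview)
Your overall architecture is right, and the final $N_1$/$N_{-1}$ computation matches the paper's, but the derivation of the crucial step has a gap. Part~(2) of Lemma~\ref{lem-image-of-l_b} gives the intersection dimension $\#R_{\mu_\bullet,b}(\lambda_\bullet) + \dim I^{d,\lambda_\bullet}$, and part~(3) describes the image in $\Flag_{\mu_\bullet}$; to conclude $\#R_{\mu_\bullet,b}(\lambda_\bullet) = \dim(\text{image})$ you must know the fibre dimension of the projection and check it equals $\dim I^{d,\lambda_\bullet}$. But the fibre over a point is a right $K^d$-coset intersected with $I^d_{\lambda_\bullet}\unif^{\lambda\p_\bullet}w_{b_\bullet}$, which unwinds to (a $w_{b_\bullet}$-conjugate of) $I^d_{b_\bullet\sigma_\bullet(\lambda_\bullet)} \cap K^d$ and hence has dimension $\dim I^{d,b_\bullet\sigma_\bullet(\lambda_\bullet)}$ --- not \emph{a priori} $\dim I^{d,\lambda_\bullet}$. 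So the missing ingredient is precisely the identity $I^{d,\lambda_\bullet} = I^{d,b_\bullet\sigma_\bullet(\lambda_\bullet)}$ (true because $b_\bullet\sigma_\bullet$ normalises both $I^d$ and $K^d$), and without it your comparison does not close.

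The paper reaches the same intermediate identity $\#R_{\mu_\bullet,b}(\lambda_\bullet) = \#\{\alpha\in\Sigma_{G^d}^+\mid \langle\alpha,\tilde\lambda_\bullet\rangle = 1\}$ by a closely related but more self-contained route: it applies the inversion lemma (Lemma~\ref{lemma-adm-inv}) to turn $\dim I^d_{\lambda_\bullet}\unif^{\lambda\p_\bullet}$ into $\dim I^d_{b_\bullet\sigma_\bullet(\lambda_\bullet)}\unif^{-\lambda\p_\bullet}$, then reruns the root-group calculation of part~(2) with the roles of $\lambda_\bullet$ and $b_\bullet\sigma_\bullet(\lambda_\bullet)$ swapped, obtaining $\#R_{\mu_\bullet,b}(\lambda_\bullet) = \#\{\alpha \gg_{b_\bullet\sigma_\bullet(\lambda_\bullet)} 0,\ \langle\alpha,\lambda\p_\bullet\rangle = 1\}$; the same cancellation $\dim I^{d,\lambda_\bullet} = \dim I^{d,b_\bullet\sigma_\bullet(\lambda_\bullet)}$ is then invoked explicitly. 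After that the paper observes that under $\langle\alpha,\lambda\p_\bullet\rangle = 1$ one has $\alpha \gg_{b_\bullet\sigma_\bullet(\lambda_\bullet)} 0 \Leftrightarrow \alpha >_{\lambda_\bullet} 0 \Leftrightarrow w_{\lambda_\bullet}\iv(\alpha) > 0$, and the substitution $\alpha \mapsto w_{\lambda_\bullet}\iv(\alpha)$ gives the positive-root count. So your approach and the paper's are essentially the same argument viewed from two sides; once you supply the missing fibre-dimension step, they converge.
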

 \begin{proof}
  We have
  \begin{align*}
   &\# R_{\mu_\bullet, b}(\lambda_\bullet) + \dim I^{d,\lambda_\bullet} \\ &\stackrel{L.~\ref{lem-image-of-l_b}}{=}  \dim I^d_{\lambda_\bullet} \unif^{\lambda\p}  
   \stackrel{L.~\ref{lemma-adm-inv}}{=} \dim \unif^{-\lambda\p}I^d_{\lambda_\bullet} 
   = \dim I^d_{b_\bullet\sigma_\bullet(\lambda_\bullet)} \unif^{-\lambda\p} \\
   &\,\,\,\,=\,\#\{\alpha \in \Sigma \mid \alpha \gg_{b_\bullet\sigma_\bullet(\lambda_\bullet)} 0,  \langle \alpha, -\lambda_\bullet\p\rangle = -1 \} + \dim I^{d,b_\bullet\sigma_\bullet(\lambda_\bullet)},
  \end{align*}
  where the last equality follows by the same calculations as in the previous lemma. Since $I^{d,\lambda_\bullet} = I^{d,b_\bullet\sigma_\bullet(\lambda_\bullet)}$, we obtain
  \begin{equation} \label{eq-R}
   \# R_{\mu_\bullet, b}(\lambda_\bullet) = \#\{\alpha \in \Sigma_{G^d} \mid \alpha \gg_{b_\bullet\sigma_\bullet(\lambda_\bullet)} 0,  \langle \alpha, \lambda_\bullet\p\rangle = 1 \}.
  \end{equation}
  Note that if $\langle \alpha, \lambda_\bullet\p\rangle = 1$, we have equivalences
  \[
   \alpha \gg_{b_\bullet\sigma_\bullet(\lambda_\bullet)} 0 \Lra \alpha >_{\lambda_\bullet} 0 \Lra w_{\lambda_\bullet}\iv (\alpha) > 0.
  \]
  Thus we deduce from (\ref{eq-R}) that
  \[
   \# R_{\mu_\bullet, b}(\lambda_\bullet) = \#\{\alpha \in \Sigma_{G^d} \mid \alpha > 0,  \langle \alpha, \tilde\lambda_\bullet\rangle = 1 \}.
  \]
  Now denote $N_i \coloneqq \#\{\alpha \in \Sigma_{G^d} \mid \alpha > 0,  \langle \alpha, \tilde\lambda_\bullet\rangle = i \}$. Since $\tilde\lambda_\bullet \in W^d.\mu_\bullet$, we have
  \begin{align*}
   \langle \rho_{G^d}, \mu_{\bullet, {\rm adom}} \rangle &= -\frac{1}{2}(N_{1}+N_{-1}) \\
   \langle \rho_{G^d}, \tilde\lambda_\bullet\rangle &= \frac{1}{2} (N_1- N_{-1})
   \shortintertext{and thus}
   \langle \rho_{G^d},\tilde\lambda_\bullet -  \mu_{\bullet, {\rm adom}} \rangle &= N_{1} = \# R_{\mu_\bullet, b}(\lambda_\bullet)
  \end{align*}
 \end{proof}

 As a consequence we obtain the main theorems for superbasic $b$ and minuscule $\mu$.
 
 \begin{prop}
 Let $b = t^{\lambda_b} \cdot w_b \in \Omega_G$ and $\mu_\bullet$ be minuscule. 
 \begin{subenv}
  \item $X_{\mu_\bullet}^{\lambda_\bullet}(b)$ is non-empty if and only if $\lambda_\bullet\p \in W^d.\mu_\bullet$. In this case it is  smooth of dimension $R_{\mu_\bullet,b}(\lambda_\bullet)$ and $J_b(F) \cap I$ acts transitively on its components.
  \item Assume in addition that $b$ is superbasic. Then the map
  \begin{align*}
   \{\lambda_\bullet \in X_\ast(T)_I^d \mid X_{\mu_\bullet}^{\lambda_\bullet}(b) \not= \emptyset\} &\to \{\tilde\mu_\bullet \in W^d.\mu_\bullet \mid  \restr{\tilde\mu_\bullet}{\Shat} \geq \nu([b]) \} \\
   \lambda_\bullet &\mapsto \tilde\lambda_\bullet  
  \end{align*}
  is surjective and its fibres are precisely the $\Omega^\sigma$-orbits. In particular, the topdimensional pieces correspond to those $\tilde\lambda_\bullet$ restricting to $\lambda([b])$ and
  \[
   \dim X_{\mu_\bullet}(b) = \langle \rho_{G^d},\mu_\bullet - \nu([b]) \rangle - \frac{1}{2} \defect_G([b])
  \]
  and have a canonical bijection
  \[
   \# J_b(F) \backslash \Irr X_{\mu_\bullet}(b) \bij \BB_{\mu_\bullet}(\lambda([b])),
  \]
  satisfying the description of Proposition~\ref{prop-ic}.
 \end{subenv} 
 \end{prop}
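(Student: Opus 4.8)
The plan is to deduce part~(1) directly from Lemma~\ref{lem-image-of-l_b} by pushing $X_{\mu_\bullet}^{\lambda_\bullet}(b)$ through the local model map, and to obtain part~(2) by feeding the dimension count of Lemma~\ref{lemma-R} into a combinatorial analysis of the map $\lambda_\bullet\mapsto\tilde\lambda_\bullet$ in the style of \cite[\S~5]{GHKR:DimADL}, together with the reduction to EL-strata of Lemma~\ref{lemma-reduction-to-EL-strata}. For (1): writing a point of $X_{\mu_\bullet}^{\lambda_\bullet}(b)$ as $g=i_\bullet\unif^{\lambda_\bullet}K^d$ with $i_\bullet\in I^d$, the computation opening the proof of Lemma~\ref{lem-image-of-l_b} gives $\ell_{b_\bullet}(i_\bullet\unif^{\lambda_\bullet})\in I^d_{\lambda_\bullet}\unif^{\lambda\p_\bullet}w_{b_\bullet}$, so setting $Y\coloneqq\ell_{b_\bullet}\iv\bigl((I^d_{\lambda_\bullet}\unif^{\lambda\p_\bullet}w_{b_\bullet})\cap K^d\unif^{\mu_\bullet}K^d\bigr)\subset I^d\unif^{\lambda_\bullet}$ we obtain, by restriction of the torsor of Lemma~\ref{lem-image-of-l_b}(1), a pro\'etale $\underline{J_b(F)\cap I}$-torsor $Y\to(I^d_{\lambda_\bullet}\unif^{\lambda\p_\bullet}w_{b_\bullet})\cap K^d\unif^{\mu_\bullet}K^d$, while $g\mapsto gK^d$ realises $Y$ as an $(I^d_{\lambda_\bullet}\cap K^d)$-torsor over $X_{\mu_\bullet}^{\lambda_\bullet}(b)$. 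Hence, by Lemma~\ref{lem-image-of-l_b}(2), $X_{\mu_\bullet}^{\lambda_\bullet}(b)$ is non-empty iff $\lambda\p_\bullet\in W^d.\mu_\bullet$; it is smooth, smoothness being preserved and reflected by the \'etale cover $\ell_{b_\bullet}$ and by the torsor under the connected group $I^d_{\lambda_\bullet}\cap K^d$; and, using $\dim_K(I^d_{\lambda_\bullet}\cap K^d)=\dim I^{d,\lambda_\bullet}$ (Lemma~\ref{lemma-admissible}) together with $\dim_K Y=R_{\mu_\bullet,b}(\lambda_\bullet)+\dim I^{d,\lambda_\bullet}$ (Lemma~\ref{lem-image-of-l_b}(2)), one gets $\dim X_{\mu_\bullet}^{\lambda_\bullet}(b)=R_{\mu_\bullet,b}(\lambda_\bullet)$. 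Finally, since the base of the torsor is irreducible (Lemma~\ref{lem-image-of-l_b}(2)), a pro\'etale $\underline{J_b(F)\cap I}$-torsor over it has its connected components permuted transitively by $J_b(F)\cap I$, and this passes to the quotient $X_{\mu_\bullet}^{\lambda_\bullet}(b)=Y/(I^d_{\lambda_\bullet}\cap K^d)$.

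For (2), the Iwahori--Bruhat decomposition gives $X_{\mu_\bullet}(b)=\bigsqcup_{\lambda_\bullet}X_{\mu_\bullet}^{\lambda_\bullet}(b)$, so by part~(1) and Lemma~\ref{lemma-R} everything reduces to the map $\lambda_\bullet\mapsto\tilde\lambda_\bullet=w_{\lambda_\bullet}\iv(b_\bullet\sigma_\bullet(\lambda_\bullet)-\lambda_\bullet)$ on $\{\lambda_\bullet:X_{\mu_\bullet}^{\lambda_\bullet}(b)\neq\emptyset\}=\{\lambda_\bullet:\tilde\lambda_\bullet\in W^d.\mu_\bullet\}$. Superbasicity enters here essentially: by \S~\ref{ssect-superbasic}, $J_b$ is anisotropic modulo centre and $J_b^{\rm ad}$ is a product of inner forms of groups $\Res_{E/F}\PGL_n$, so — passing to $G^d$ with the twisted Frobenius $\sigma_\bullet$, and using the adjoint reduction of Corollary~\ref{cor-reduction-to-adjoint-sb} — we are in exactly the situation analysed for split groups in \cite[\S~5]{GHKR:DimADL}. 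Transcribing that argument, the equation $(b_\bullet\sigma_\bullet-1)(\lambda_\bullet)=w_{\lambda_\bullet}(\tilde\mu_\bullet)$ has a solution $\lambda_\bullet$ for a prescribed $\tilde\mu_\bullet\in W^d.\mu_\bullet$ precisely when the Mazur-type inequality $\restr{\tilde\mu_\bullet}{\Shat}\geq\nu([b])$ holds, and the solution set is then a single $\Omega^\sigma$-orbit; this yields the asserted surjectivity and fibre description.

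The remaining assertions of (2) are then bookkeeping. By part~(1) and Lemma~\ref{lemma-R}, $\dim X_{\mu_\bullet}^{\lambda_\bullet}(b)=\langle\rho_{G^d},\tilde\lambda_\bullet-\mu_{\bullet,{\rm adom}}\rangle$ depends only on $\tilde\lambda_\bullet$, so $\dim X_{\mu_\bullet}(b)$ is the maximum of $\langle\rho_{G^d},\tilde\mu_\bullet-\mu_{\bullet,{\rm adom}}\rangle$ over $\tilde\mu_\bullet\in W^d.\mu_\bullet$ with $\restr{\tilde\mu_\bullet}{\Shat}\geq\nu([b])$; an elementary computation with the fundamental weights and the definition of $\lambda([b])$ from \S~\ref{ssect-invariants} shows that this maximum is attained exactly when $\restr{\tilde\mu_\bullet}{\Shat}=\lambda([b])$ and equals $\langle\rho_{G^d},\mu_\bullet-\nu([b])\rangle-\tfrac12\defect_G([b])$. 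For the irreducible components, Lemma~\ref{lemma-reduction-to-EL-strata} identifies $J_b(F)\backslash\Irr X_{\mu_\bullet}(b)$ with the $(J_b(F)\cap I)$-orbits of top-dimensional components of the strata $X_{\mu_\bullet}^{\lambda_\bullet}(b)$ with $\kappa(\lambda_\bullet)=0$; by part~(1) there is a single such orbit per top-dimensional $\lambda_\bullet$, and by the fibre description each $\Omega^\sigma$-orbit contains a unique $\lambda_\bullet$ with $\kappa(\lambda_\bullet)=0$, so this set is in bijection with $\{\tilde\mu_\bullet\in W^d.\mu_\bullet:\restr{\tilde\mu_\bullet}{\Shat}=\lambda([b])\}$. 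For minuscule $\mu_\bullet$ this is canonically $\BB_{\mu_\bullet}(\lambda([b]))$, since $\BB_{\mu_i}(\lambda_i)$ is canonically the set of $W$-conjugates of $\mu_i$ restricting to $\lambda_i$ on $\Shat$ and $\BB_{\mu_\bullet}(\lambda)=\bigsqcup_{\sum\lambda_i=\lambda}\prod_i\BB_{\mu_i}(\lambda_i)$ by \S~\ref{ssect-mv-tensor}. The compatibility with Proposition~\ref{prop-ic} is read off from Lemma~\ref{lem-image-of-l_b}(3): the image of the relevant stratum in $\Flag_{\mu_\bullet}$ is the image of $U_{\lambda_\bullet}^d(0)\unif^{\lambda\p_\bullet}$, which $w_{\lambda_\bullet}\iv$ carries to the image of $U^d(0)\unif^{\tilde\lambda_\bullet}$, i.e.\ to the Mirkovi\'c--Vilonen cycle attached to $\tilde\lambda_\bullet$, and $\tilde\lambda_\bullet$ is $b_\bullet\sigma_\bullet(\lambda_\bullet)-\lambda_\bullet$ up to the $W^d$-twist $w_{\lambda_\bullet}$.

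The main obstacle is the combinatorial lemma used in part~(2): surjectivity of $\lambda_\bullet\mapsto\tilde\lambda_\bullet$ onto the Mazur-admissible $W^d$-conjugates of $\mu_\bullet$, with fibres the $\Omega^\sigma$-orbits. This is the one place where superbasicity is indispensable — it forces $\ker(b_\bullet\sigma_\bullet-1)$ to be minimal, so that the recursion through the $\PGL_n$-factors closes up — and the sole genuine difficulty is porting the split computation of \cite[\S~5]{GHKR:DimADL} to the coinvariant lattice $X_\ast(T)_I$ in the ramified setting; everything else is manipulation of the torsor structures already established in Lemma~\ref{lem-image-of-l_b}.
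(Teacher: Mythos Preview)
Your treatment of part~(1) is correct and matches the paper's argument: both pull the stratum back through the pro\'etale torsor of Lemma~\ref{lem-image-of-l_b}(1) and read off non-emptiness, smoothness, dimension, and transitivity on components from Lemma~\ref{lem-image-of-l_b}(2).

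For part~(2), however, there is a genuine gap: the combinatorial lemma you isolate as the main obstacle --- that $\lambda_\bullet\mapsto\tilde\lambda_\bullet$ surjects onto $\{\tilde\mu_\bullet\in W^d.\mu_\bullet:\restr{\tilde\mu_\bullet}{\Shat}\geq\nu([b])\}$ with fibres the $\Omega^\sigma$-orbits --- is \emph{not} contained in \cite[\S~5]{GHKR:DimADL}. That section treats the reduction of $X_\mu(b)$ to a Levi subgroup (it is precisely what the present paper's Section~\ref{sect-reduction-step} generalises), not the internal combinatorics of the superbasic case. The equation $(b_\bullet\sigma_\bullet-1)(\lambda_\bullet)=w_{\lambda_\bullet}(\tilde\mu_\bullet)$ is nonlinear in $\lambda_\bullet$ (since $w_{\lambda_\bullet}$ depends on $\lambda_\bullet$), and there is no ``split computation'' in \cite{GHKR:DimADL} to transcribe.

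The paper's route is different: after the adjoint reduction (Corollary~\ref{cor-reduction-to-adjoint-sb}) to $G=\Res_{E/F}\GL_n$, it encodes each $\lambda_\bullet$ as an \emph{EL-chart} $A(\lambda_\bullet)\subset\ZZ^{(d\cdot f)}$ in the sense of \cite{HamacherViehmann:ADLVIrredComp,ChenViehmann:ADLV}, identifies its \emph{type} with a permutation of $\lambda_\bullet\p$ and its \emph{cotype} with $w_0\tilde\lambda_\bullet$, and then invokes \cite[Thm.~4.16]{HamacherViehmann:ADLVIrredComp}, which asserts precisely that the cotype gives a bijection between EL-charts of Hodge point $\mu_\bullet$ modulo the shift relation $A\sim A+k$ and the set $\{\tilde\mu_\bullet\in W^d.\mu_\bullet:\restr{\tilde\mu_\bullet}{\Shat}\leq\nu(b)\}$. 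One then checks that the shift relation coincides with the $\Omega^\sigma$-action. This EL-chart machinery is the substantive input you are missing. The final identification of the maximal value $\langle\rho_{G^d},\tilde\lambda([b])-\mu_{\bullet,{\rm adom}}\rangle$ with $d(b,\mu)$ is also not just ``an elementary computation'': the paper cites \cite[Lemma~4.6]{Hamacher:DimADLV} for it.
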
 
 \begin{proof}
  (1) By the previous lemma we have
  \[
   \dim X_{\mu_\bullet}^{\lambda_\bullet}(b) = \dim I_{\lambda_\bullet} \cdot \unif^{\lambda\p_\bullet} \cdot w_{b_\bullet} \cap K^d \unif^{\mu_\bullet} K^d  - \dim I^{\lambda_\bullet} = \# R_{\mu_\bullet,b}(\lambda_\bullet)
  \]
  and similarly
  \[
   J_b(F) \backslash \Irr X_{\mu_\bullet}^{\lambda_\bullet}(b) \cong \Irr I_{\lambda_\bullet} \cdot \unif^{\lambda\p_\bullet} \cdot w_{b_\bullet} \cap K^d \unif^{\mu_\bullet} K^d,
  \] 
  which is a singleton.
  
 (2) By Corollary~\ref{cor-reduction-to-adjoint-sb}, the statement does not change when we replace $G$ by a group with isomorphic adjoint group. Hence we may assume that $G = \Res_{E/F} \GL_n$ by (\ref{ssect-superbasic}). We denote by $S \subset G$ the diagonal torus and by $B$ the Borel subgroup of lower triangular matrices. We denote by $f$ the inertia index of $E/F$ and identify $G(\L) = \GL_n(\Ebreve)^f$. Then we may choose $b$ as follows: There exist integers $m_1,\dotsc,m_f$ such that for every $1 \leq \tau \leq f$ we have
 \[
  b_\tau = \diag(\underbrace{\unif^{\lceil m_\tau/n \rceil},\dotsc,\unif^{\lceil m_\tau/n \rceil}}_{(m_\tau\ {\rm mod}\ n) \ times}, \unif^{\lfloor m_\tau/n \rfloor},\dotsc,\unif^{\lfloor m_\tau/n \rfloor}) \cdot P_{m_\tau}
 \]
 where $P_{m_\tau}$ is the permutation matrix corresponding to the permutation $i \mapsto i+m_\tau \ {\rm mod} \ n$.
 
 Now the claim follows from the calculations in \cite[\S~4]{HamacherViehmann:ADLVIrredComp}. We briefly explain how to translate our description to  the language of EL-charts used in \cite{HamacherViehmann:ADLVIrredComp}.  We denote by $f$ the inertia index of $E/F$ and identify $X_\ast(T^d)_I = ((\ZZ^n)^f)^d\cong (\ZZ^n)^{d\cdot f}$ such that $\sigma_\bullet(v_1,v_2,\dotsc,v_{df}) = (v_{d\cdot f},v_1,\dotsc,v_{d\cdot f-1})$ and $b_\bullet$ only acts non-trivial on the $d,2d,\dotsc,d\cdot f$-th component. This also identifies $W^d = S_n^{d\cdot f}$. For any $\lambda_\bullet \in X_\ast(T)_I^d = (\ZZ^n)^{d\cdot f}$, we define
 \[
  A(\lambda_\bullet) \coloneqq \bigsqcup_{\tau=1}^{d\cdot f-1} \{ x\cdot n + i \mid 1 \leq i \leq n, x \geq \lambda_{\tau,i} \}.
 \] 
 Moreover, we denote $\ZZ^{(d\cdot f)} \coloneqq \bigsqcup_{\tau=1}^{d\cdot f} \ZZ$ and by for any $m \in \ZZ$ we denote by $k_{\tau} \in \ZZ^{(d\cdot f)}$ the corresponding element in the $\tau$-th component. We consider
 \begin{align*}
  \frm\colon \ZZ^{(d\cdot f)} &\to \ZZ^{(d\cdot f)} \\
  k_{(\tau-1)} &\mapsto \begin{cases}
   (k+m_\tau)_{(\tau)} &\textnormal{if } d \mid \tau \\
   k_{(\tau)} & \textnormal {else.}
  \end{cases}
 \end{align*}
 Then $A(b_\bullet\sigma_\bullet(\lambda_\bullet)) = \fsf(A(\lambda_\bullet))$.
 
 Now $A(\lambda_\bullet)$ is a weak EL-chart in the sense of \cite{ChenViehmann:ADLV}, i.e.\ it is bounded below, $A(\lambda_\bullet)+n \subset A(\lambda_\bullet)$ and $i_{(\tau)} \in A(\lambda_\bullet)$ for all $\tau$ if $i$ is big enough. This is slightly more general than the notion of an EL-chart in \cite{HamacherViehmann:ADLVIrredComp}, but does not change the combinatorics. Comparing definitions, we see that the type of $A(\lambda_\bullet)$ equals the image of $\lambda_\bullet\p$ under a certain permutation $w \in W^d$. More precisely we have
 \[
  \type(A) = (\varsigma_{m,n}, \dotsc, \varsigma_{m,n}) \circ (c_{\lambda_1},\dotsc,c_{\lambda_1})(\lambda_\bullet\p),
 \]
 where $c_{\lambda_1} \in S_n$ is the cyclic permutation mapping the largest entry of $\lambda_1$ to the first position and $(\varsigma_{m,n}\colon i \mapsto i\cdot m\ \mathrm{mod}\ n) \in S_n$.  In particular the Hodge point of $A(\lambda)$ equals $\type(A)_{\rm dom} = (\lambda_\bullet\p)_{\rm dom}$. 
 
 Similarly one sees that the cotype of $A(\lambda_\bullet)$ equals $w_0 \tilde\lambda_\bullet$\footnote{ Since we choose the upper triangular Borel in \cite{HamacherViehmann:ADLVIrredComp}, the $w_0$ occurs here}.
We recall that \cite[Thm.~4.16]{HamacherViehmann:ADLVIrredComp} the cotype has defines a bijection
 \[
  \{\textnormal{EL-charts with Hodge-point } \mu_\bullet\}/\sim \bij \{\tilde\mu_\bullet \in W^d.\mu_\bullet \mid  \restr{\tilde\mu_\bullet}{\Shat} \leq \nu(b) \},
 \] 
 where $A_1 \sim A_2$ iff there exists $k \in \ZZ$ such that $A_2 = A_1+k$. One easily checks that this coincides with the action of $\Omega^\sigma$, proving the first assertion of (2). 
 
 Since $J_b$ is anisotropic mod center, its Iwahori decompostion reduces to $J_b(F) = \Omega^\sigma \cdot (J_b(F) \cap I)$. Thus we deduce from (1) that
 \[
  \dim X_{\mu_\bullet}(b) = \max \# R_{\mu_\bullet,b}(\lambda_\bullet) 
 \]
 and that the $J_b(F)$-orbits of topdimensional irreducible components correspond to $\tilde\lambda_\bullet$ with $\#R_{\mu_\bullet,b}(\lambda_\bullet)$ maximal. By Lemma~\ref{lemma-R} this is equivalent to $\restr{\tilde\lambda_\bullet}{\Shat^d}$ being maximal, i.e.\ equal to $\lambda([b])$. In this case we obtain
 \[
  \dim X_{\mu_\bullet}(b) = \langle \rho_{G^d}, \tilde\lambda([b]) - \mu_{\bullet, {\rm adom}} \rangle = d(b,\mu),
 \]
 where the last equality follows from \cite[Lemma~4.6]{Hamacher:DimADLV}, which is a purely root theoretic computation and thus also holds in the ramified case.  
 \end{proof}

We can now prove the main theorem in the case that $b$ is superbasic.

 \begin{thm} \label{thm-superbasic}
  Assume that $b \in G(\L)$ is superbasic. 
  \begin{subenv}
   \item We have
  \[
   \dim X_\mu(b) = d(b,\mu)
  \]
  and
  \[
   \# J_b(F) \backslash \Irr X_\mu(b) = \dim V_\mu(\lambda(b)).
  \]
  \item Explicitely, let $C \in \Irr X_\mu(b)$ and $\lambda_C$ such that $C \subset \cl{X_\mu^{\lambda_C}(b)}$. Then
  \[
   Z_C \coloneqq w_{\lambda_C}\iv \cl{\ell_b( \tilde{C} \cap I\unif^{\lambda_C}) \cdot K /K} \in MV_\mu(\tilde\lambda_C)
  \]
  and $C \mapsto Z_C$ defines a bijection 
  \[
   J_b(F) \backslash \Irr X_\mu(b) \bij \bigcup_{\restr{\tilde\lambda}{T^\vee} = \lambda([b])} MV_\mu(\tilde\lambda)
  \]  
  
  \end{subenv}
 \end{thm}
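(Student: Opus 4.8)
The plan is to bootstrap from the minuscule case proved in the preceding proposition, arguing by induction on $\langle\rho,\mu\rangle$. The case of $\mu$ minuscule is the $d=1$ instance of that proposition; when $\mu$ is not minuscule every $\mu'<\mu$ has strictly smaller $\langle\rho,\mu'\rangle$, so the full statement may be assumed for all such $\mu'$. One may also assume $X_\mu(b)\neq\emptyset$, since both sides are empty otherwise by the nonemptiness criterion \cite{He:KottwitzRapoportConj}. Just as in the proof of the preceding proposition, Corollary~\ref{cor-reduction-to-adjoint-sb} together with \S\ref{ssect-superbasic} reduces to $G=\Res_{E/F}\GL_n$; fix then a decomposition $\mu=\mu_1+\dotsb+\mu_d$ into dominant minuscule cocharacters and set $\mu_\bullet=(\mu_1,\dotsc,\mu_d)$.

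The device linking $X_\mu(b)$ to the already-understood $X_{\mu_\bullet}(b)$ is the first-projection map $\pi\colon X_{\mu_\bullet}(b)\to\bigsqcup_{\mu'\leq\mu}X_{\mu'}(b)$, $(g_1,\dotsc,g_d)\mapsto g_1K$, which is well defined since $g_1\iv b\sigma(g_1)\in K\unif^{\mu_1}K\dotsm K\unif^{\mu_d}K\subseteq\cl{K\unif^\mu K}$, and is $J_b(F)$-equivariant under the diagonal identification $J_b(F)\cong J_{b_\bullet}(F)$. Reading a point of $X_{\mu_\bullet}(b)$ as a chain of lattices, the fibre of $\pi$ over a point $gK\in X_{\mu'}(b)$ is canonically the fibre $m_{\mu_\bullet}\iv(g\iv b\sigma(g)K)$ of the convolution morphism $m_{\mu_\bullet}\colon\Flag_{\mu_\bullet}\to\Flag_G$ over the point $g\iv b\sigma(g)K$, which lies in the stratum $K\unif^{\mu'}K/K$. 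I would then feed in the ramified geometric Satake equivalence \cite{Zhu:RamifiedSatake,Richarz:AffGr}: $m_{\mu_\bullet}$ is $L^+\Kscr$-equivariant and semismall, so over $K\unif^{\mu'}K/K$ all of its fibres are equidimensional of dimension $\langle\rho,\mu-\mu'\rangle$ and have exactly $m_{\mu_\bullet}^{\mu'}$ topdimensional components, while over the open stratum $K\unif^\mu K/K$ (where $m_{\mu_\bullet}^\mu=1$, cf.\ \S\ref{ssect-mv-tensor}) it restricts to an isomorphism. Consequently $\dim\pi\iv(X_{\mu'}(b))=\dim X_{\mu'}(b)+\langle\rho,\mu-\mu'\rangle$, $\#J_b(F)\backslash\Irr\pi\iv(X_{\mu'}(b))=m_{\mu_\bullet}^{\mu'}\cdot\#J_b(F)\backslash\Irr X_{\mu'}(b)$, and $\pi$ restricts to an isomorphism $\pi\iv(X_\mu(b))\riso X_\mu(b)$.

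The numerical core is then as follows. Since $\pi\iv(X_\mu(b))\cong X_\mu(b)$ sits inside $X_{\mu_\bullet}(b)$, which has dimension $d(b,\mu)$ by the preceding proposition, one has $\dim X_\mu(b)\leq d(b,\mu)$. By the induction hypothesis $\dim X_{\mu'}(b)=d(b,\mu')$ for $\mu'<\mu$ with $X_{\mu'}(b)\neq\emptyset$, so $\dim\pi\iv(X_{\mu'}(b))=d(b,\mu')+\langle\rho,\mu-\mu'\rangle=d(b,\mu)$ as well. Were $\dim X_\mu(b)<d(b,\mu)$, then $\pi\iv(X_\mu(b))$ would contain no topdimensional component of $X_{\mu_\bullet}(b)$, so counting topdimensional components and using the induction hypothesis together with the branching identity $V_{\mu_\bullet}=\bigoplus_{\mu'\leq\mu}V_{\mu'}^{\oplus m_{\mu_\bullet}^{\mu'}}$ would give
\[
 \dim V_{\mu_\bullet}(\lambda([b]))=\#J_b(F)\backslash\Irr X_{\mu_\bullet}(b)=\sum_{\mu'<\mu}m_{\mu_\bullet}^{\mu'}\dim V_{\mu'}(\lambda([b])),
\]
forcing $\dim V_\mu(\lambda([b]))=0$ and contradicting $X_\mu(b)\neq\emptyset$ by \cite{He:KottwitzRapoportConj}. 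Hence $\dim X_\mu(b)=d(b,\mu)$, and reinstating the $\mu'=\mu$ term in the same count yields $\#J_b(F)\backslash\Irr X_\mu(b)=\dim V_\mu(\lambda([b]))$, which is part~(1). I expect this step to be the main obstacle: one must make precise that $\pi$ is, stratum by stratum, a fibration contributing exactly $m_{\mu_\bullet}^{\mu'}$ component-orbits with no residual monodromy, and one must marshal the ramified Satake inputs (semismallness, the tensor multiplicities, the multiplicity-one statement for the top constituent) on which the whole count rests.

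For part~(2) I would transport the bijection of the preceding proposition — which satisfies the description of Proposition~\ref{prop-ic}, specialised to the Levi $M=G$ since $b$ is superbasic — across the isomorphism $\pi\iv(X_\mu(b))\cong X_\mu(b)$. By \S\ref{ssect-mv-tensor} the embedding $\BB_\mu(\lambda([b]))\hookrightarrow\BB_{\mu_\bullet}(\lambda([b]))$ corresponds geometrically to the inclusion of the components of $X_{\mu_\bullet}(b)$ lying in $\pi\iv(X_\mu(b))$ — a set of the right cardinality by part~(1) — and, on the Satake side, to the map $MV_\mu(\tilde\lambda)\hookrightarrow MV_{\mu_\bullet}(\tilde\lambda_\bullet)$, $Z\mapsto\cl{m_{\mu_\bullet}\iv(Z)}$. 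Restricting the explicit Proposition~\ref{prop-ic} description of a component $\Chat$ of $X_{\mu_\bullet}(b)$ to these components, and noting that under $\pi$ the tuple $\lambda_\bullet$ has first entry $\lambda_C$ and cotype $\tilde\lambda_C$, one checks that the transported bijection is exactly $C\mapsto Z_C$. This final verification is a routine diagram chase using the compatibilities already assembled in \S\ref{ssect-mv-tensor}.
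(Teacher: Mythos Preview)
Your proof is correct and follows essentially the same approach as the paper: reduce to $\Res_{E/F}\GL_n$, decompose $\mu$ into minuscule summands, analyse the first projection $X_{\mu_\bullet}(b)\to X_{\leq\mu}(b)$ via the convolution morphism, and conclude by induction with the same counting argument; part~(2) is then the same diagram chase comparing $\ell_{b_\bullet}$ and $\ell_b$ through \S\ref{ssect-mv-tensor}. The only packaging differences are that the paper cites \cite[Thm.~1.22, Cor.~1.23]{Nie:ADLVIrredComp} for the fibre analysis where you invoke semismallness directly, and that the paper uses only $m_{\mu_\bullet}^\mu=1$ (irreducible fibres of dimension~$0$) rather than your stronger claim that $\pi$ restricts to an isomorphism over $X_\mu(b)$---but the weaker statement already suffices for the dimension and component counts you need.
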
 
 \begin{proof}
 The proof follows by the same argument as in the unramified case (\cite[Prop.~2.12, Thm.~2.1]{Nie:ADLVIrredComp}). Since we are not assuming the dimension formula a priori as in loc.~cit., we give the proof for the reader's convenience. 
  
  By Lemma~\ref{lemma-reduction-to-adjoint} and the previous corollary it we may replace $G$ by $G^{\rm ad}$ and vice versa. Hence we may assume that $G  = \Res_{E/F} GL_n$ by \eqref{ssect-superbasic}. In particular, we may write $\mu = \mu_1 + \dotsc + \mu_d$ as a sum of minuscule cocharacters. Note that the projection $G^d \to G$ to the first factor induces a $J_b(F)$-equivariant surjective morphism $m\colon X_{\mu_\bullet}(b) \to X_{\leq\mu}(b)$. By the same argument as in the unramified case \cite[Thm.~1.22,Cor~1.23]{Nie:ADLVIrredComp} we see that for $\mu' \leq \mu$ and $C \in \Irr X_{\mu'}(b)$, we have that $m\iv (C)$ is equidimensional of dimension $\dim C + \langle \rho, \mu - \mu' \rangle$ and has $m_{\mu_\bullet}^\mu$ irreducible components. 

  We proceed by induction on $\mu$. Assume the theorem holds true for every $\mu' < \mu$. By the dimension formula the closure of the preimage of each $C \in \Irr X_{\mu'}(b)$ contains $m_{\mu_\bullet}^{\mu'}$ topdimensional irreducible components of $X_{\mu_\bullet}(b)$. Hence the number of $J_b(F)$-orbits of irreducible components of $X_{\mu_\bullet}(b)$ mapping generically to $X_\mu(b)$ equals
 \[
  \dim V_{\mu_\bullet}(\lambda(b)) - \sum_{\mu' < \mu} \dim V_{\mu'}(\lambda(b)) = \dim V_\mu(\lambda(b)) 
 \]
 In particular, we obtain $\dim X_\mu(b) = \dim p^{-1} X_\mu(b) = d(b,\mu)$. Since $m_{\mu_\bullet}^\mu = 1$ we obtain
 \[
  \# J_b(F) \backslash \Irr(X_\mu(b)) = \# J_b(F) \backslash \Irr(p^{-1}(X_\mu(b))) =  V_\mu(\lambda(b)).
 \]
  Now consider the Cartesian diagram
  \begin{center}
   \begin{tikzcd}
   \Xtilde_{\mu_\bullet}(b) \ar{r}{\ell_{b_\bullet}} \ar{d}{pr_1} & \left[ b_\bullet \right] \cap K^d\unif^{\mu_\bullet}K^d \ar{d}{mult.}\\ 
   \Xtilde_{\leq \mu}(b) \ar{r}{\ell_b}  & \left[ b \right] \cap \bigcup_{\mu' \leq \mu} K \unif^{\mu'} K.
   \end{tikzcd}
  \end{center}
  By the above reasoning, there exists a unique $C_\bullet \in \Irr X_{\mu_\bullet}(b)$ with $\cl{pr_1\iv(C)} = C_\bullet$. We fix $\lambda_\bullet \in X_\ast(T)^d$ such that $C_\bullet = \cl{X_{\mu_\bullet}^{\lambda_\bullet}(b)}$. By same calculations as in the unramified case (\cite[Prop.~2.4]{Nie:ADLVIrredComp}), we deduce from Lemma~\ref{lem-image-of-l_b}~(3) that the image of $\ell_{b_\bullet}(I^d\unif^{\lambda_\bullet}) \cap K^d\unif^{\mu_\bullet}K^d$ in $\Flag_{\mu_\bullet}$ is a dense open subset of the image of $w_{\lambda_\bullet} S_{\tilde\lambda_\bullet} \cap \Flag_{\mu_\bullet}$. Altogether, we get a commutative diagram
    \begin{center}
     \begin{tikzcd}
      pr_1\iv(\Ctilde) \cap I \unif^{\lambda_\bullet} \ar{d}{pr_1} \ar{r}{\ell_{b_\bullet}} & I_{\lambda_\bullet} \unif^{\lambda_\bullet\p} \cap \tilde{m}_{\mu_\bullet}\iv(K\unif^\mu K) \ar{d}{\tilde{m}_{\mu_\bullet}} \ar{r} & w_{\lambda_C} S_{\tilde\lambda_\bullet} \cap m_\mu\iv(\Flag_{\mu}) \ar{d}{m_{\mu_\bullet}} \\
      \Ctilde \cap I  \unif^{\lambda_C} \ar{r}{\ell_b} & I_\lambda \unif^{\lambda_\bullet\p} \cap K\unif^\mu K \ar{r} & w_{\lambda_C} S_{\tilde\lambda_C} \cap \Flag_\mu
     \end{tikzcd}
    \end{center}
    where $\tilde{m}_{\mu_\bullet}\colon K^d \unif^{\mu_\bullet} K^d \to G(L)$ denotes the multiplication map. By the reasoning above, the top right morphism is dominant. As $\restr{\tilde\lambda_C}{S^\vee} = \restr{\tilde\lambda_\bullet}{S^\vee} = \lambda([b])$, the map $J_b(F) \backslash \Irr X_\mu(b)\to \bigcup MV_\mu(\tilde\lambda), C \mapsto Z_C$ is well-defined. Since to any $Z \in\bigcup MV_\mu(\tilde\lambda) $ there exists a unique $\tilde\lambda_\bullet$ such that ${m_{\mu_\bullet}}\iv(Z) = S_{\tilde\lambda_\bullet} \cap m_{\mu_\bullet}\iv(\Flag_\mu)$ (cf.~\ref{ssect-mv-tensor}), this map is bijective. 
 \end{proof}

\section{Reduction to a Levi subgroup} \label{sect-reduction-step}

\subsection{} \label{ssect-reduction-to-levi} Let $P = MN \subset G$ be a (rational) standard Levi subgroup where $M$ its Levi factor and $N$ its unipotent radical. For any subgroup $H \subset G(L)$ we denote $H_P \coloneqq H \cap P$ and $H_M \coloneqq H \cap M$. Then we have a diagram of ind-schemes
\begin{center}
 \begin{tikzcd}
  & \Flag_P \arrow{ld}{p} \arrow{rd}{i} & \\
  \Flag_M& & \Flag_G.
 \end{tikzcd}
\end{center}
We denote for any $b \in M(L)$ by $X_\mu(b)$ the affine Deligne-Lusztig variety in $\Flag_G$ and
\begin{align*}
 X_\mu^{P\subset G}(b) &\coloneqq i^{-1} X_\mu(b) \\
 X_\mu^{M\subset G}(b) &\coloneqq p(X_\mu^{P\subset G}(b)) \\ &= \{ gK_M \in M(L)/K_M \mid g\iv b\sigma(b) \in K \unif^\mu K \} \\&= \bigsqcup_{\mu_M \in  X_\ast(T)_{M-dom} \atop \mu_{M,dom} = \mu} X_{\mu_M}^M(b).
\end{align*}
We denote by $X_\ast(T)_{I,M-dom} \subset X_\ast(T)$ the submonoid of $M$-dominant elements and denote by $\leq_M$ the Bruhat order on  $X_\ast(T)_{I,M-dom}$. We define $I_{\mu,b} \coloneqq \{\mu_M \in X_\ast(T)_{M-dom} \mid \mu_{M,dom} = \mu \textnormal{ and } X_{\mu_M}^M(b) \not= \emptyset \}$.

\subsection{} \label{sect-XP} As in the unramified case (\cite[p.~1628]{Hamacher:DimADLV}) one proves that $i$ identifies $\Flag_P$ with a disjoint union of locally closed subsets of $\Flag_G$ which cover $\Flag_G$. In particular, we obtain
\begin{align*}
 \dim X_\mu(b) &= \dim X_\mu^{P\subset G}(b) \\
 \Irr X_\mu(b) &= \Irr X_\mu^{P\subset G}(b)
\end{align*}
Thus it remains to relate the geometry of $X_\mu^{P\subset G}(b)$ with $X_\mu^{M\subset G}(b)$. For this we generalise the calculations of \cite[\S~5]{GHKR:DimADL} in the split case. Denote by $\pi\colon X_\mu^{P\subset G}(b) \to X_\mu^{M\subset G}(b)$ the restriction of $p$. For any $mK_M \in X_\mu^{M\subset G}(b)$ we have
 \begin{align*}
  \pi\iv(m) &= \{ mnK_P \in P(L)/K_P \mid n\iv m\iv b \sigma(m) \sigma(n) \in K \unif^\mu K \} \\
  &\cong \{ nK_N \in N(L)/K_N \mid L_{1,b_m}(n) \in K\unif^\mu Kb_m \cap N(L) \},
 \end{align*}
 where $b_m \coloneqq m\in b \sigma(m)$ and $L_{1,b_m}(n) \coloneqq n\iv \cdot b_m \sigma(n) b_m\iv$. Writing $b_M = k_1 \unif^{\mu_M} k_2$ with $k_1,k_2 \in K_M, \mu_M \in I_{\mu,b}$, conjugating with $\unif^{\mu_M}k_2$ gives us a bijection
 \[
  K\unif^\mu Kb_m \cap N(L) \cong K\unif^\mu K \cap N(L)\unif^{\mu_M}.
 \]
 We denote
 \[
  d(\mu,\mu_M) \coloneqq \dim \{ n \in N(L)/K_N \mid n\unif^{\mu_M} \in K\unif^\mu K \}.
 \]
 \begin{defn}
  For any $\mu \in X_\ast(T)_{I,dom}, \kappa \in \pi_1(M)_\Gamma$ we denote
  \begin{align*}
   S_M(\mu) &= \{\mu_M \in X_\ast(T)_{I,M-dom} \mid K\unif^\mu K\unif^{\mu_M} \cap N(L) \not= \emptyset \} \\
   \Sigma(\mu) &= \{\mu' \in X_\ast(T)_I \mid \mu'_{dom} \leq \mu \} \\
   \Sigma(\mu)_{M-dom} &= \Sigma(\mu) \cap X_\ast(T)_{I,M-dom} \\
   \Sigma(\mu)_{M-max} &= \{\mu' \in \Sigma(\mu)_{M-dom} \mid \mu' \textnormal{ maximal  in } \Sigma(\mu)_{M-dom} \textnormal{ w.r.t.\ } \leq_M \}.
  \end{align*}
 \end{defn}
 
  \begin{prop}[{\cite[Lemma~5.4.1,Prop.~5.4.2]{GHKR:DimADL}}] \label{prop-intersection}
   We have
   \[
    \Sigma(\mu)_{M-max} \subset S_M(\mu) \subset \Sigma(\mu)_{M-dom}.
   \]
   For any $\mu_M \in S_M(\mu)$ we have  $d(\mu,\mu_M) \leq \langle \rho, \mu+\mu_M \rangle - 2 \langle \rho_M, \mu_M \rangle$. Moreover, we have equality if and only if $V_{\mu_M}^M$ occurs in the restriction of $V_\mu$ to $M^\vee$. In this case the number of top-dimensional irreducible components equals the multiplicity of $V_{\mu_M}^M$ in $\restr{V_\mu}{M^\vee}$.
   \end{prop}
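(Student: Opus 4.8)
The plan is to adapt the argument of G\"ortz--Haines--Kottwitz--Reuman \cite[\S 5]{GHKR:DimADL} for split groups, carrying out the root-group computations with the affine root system $\Sigma$ and the filtration subgroups $U'_{\alpha,m}$ in place of ordinary root groups, and replacing the geometric Satake input by its ramified form (\cite{Zhu:RamifiedSatake}, \cite{Richarz:AffGr}), with the possibly-non-connected bookkeeping supplied by \cite[\S 5]{Haines:Dualities} and \cite{HainesRostami:RamifiedSatake}. The three assertions will be treated in turn: (i) the inclusions $\Sigma(\mu)_{M-max}\subseteq S_M(\mu)\subseteq\Sigma(\mu)_{M-dom}$; (ii) the dimension bound $d(\mu,\mu_M)\leq\langle\rho,\mu+\mu_M\rangle-2\langle\rho_M,\mu_M\rangle$; (iii) the criterion for equality together with the count of top-dimensional components. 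The unifying tool is the retraction $r_P\colon\Flag_G\to\Flag_M$ obtained from the decomposition of \S\ref{sect-XP} into locally closed pieces $\Flag_P\hookrightarrow\Flag_G$ by sending $gK$ with $g\in P(\L)$ to $\pr_M(g)K_M$; it is $K_M$-equivariant, invariant under left translation by $N(\L)$, and equal to the hyperbolic-localization map for a dominant cocharacter of the centre of $M$, hence a morphism which is proper on Schubert varieties. Via this map, $d(\mu,\mu_M)$ computes, up to a finite-dimensional compact correction, the dimension of $\bigl(K\unif^\mu K/K\bigr)\cap N(\L)\unif^{\mu_M}K/K$, the intersection of a Schubert cell with a parabolic semi-infinite orbit.

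For (i) I would argue as in \cite[Lemma~5.4.1]{GHKR:DimADL}. Since $r_P$ contracts $\cl{K\unif^\mu K/K}=\bigsqcup_{\mu'\leq\mu}K\unif^{\mu'}K/K$ into itself and is $K_M$-equivariant, its image on this Schubert variety is a closed, $K_M$-stable union of Schubert cells $K_M\unif^{\mu_M}K_M/K_M$ with $(\mu_M)_{dom}\leq\mu$; this gives $S_M(\mu)\subseteq\Sigma(\mu)_{M-dom}$. Conversely, for $\mu'\in\Sigma(\mu)_{M-max}$ the point $\unif^{\mu'}K$ lies in $K\unif^{\mu'_{dom}}K/K\subseteq\cl{K\unif^\mu K/K}$, hence $\unif^{\mu'}K_M=r_P(\unif^{\mu'}K)$ lies in the (closed) image of $r_P$; as $\mu'$ is $\leq_M$-maximal in $\Sigma(\mu)_{M-dom}$ and the image is contained in $\Sigma(\mu)_{M-dom}$, the cell through $\unif^{\mu'}K_M$ cannot lie in the boundary of the image, so $\mu'\in S_M(\mu)$. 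For (ii), after conjugating $K\unif^\mu K$ by $\unif^{\mu_M}$ one expresses the relevant fibre in terms of the groups $U'_{\alpha,m}$ using the Iwahori--Bruhat and Iwahori factorisations and estimates its dimension along the affine roots lying over the roots occurring in $N$ --- the computation of loc.\ cit., now performed with $U'_{\alpha,m}$; this yields $d(\mu,\mu_M)\leq\langle\rho,\mu+\mu_M\rangle-2\langle\rho_M,\mu_M\rangle$. (The inequality also follows formally from the semismallness of $r_P$ restricted to $\cl{K\unif^\mu K/K}$, itself a consequence of the purity of the intersection-cohomology complex $\IC_\mu$ of $\cl{K\unif^\mu K/K}$.)

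Part (iii) is where the ramified geometric Satake equivalence enters. The complex $\IC_\mu$ corresponds under the Satake equivalence to the irreducible $G^\vee$-representation $V_\mu$, and the hyperbolic-localization (parabolic-induction) compatibility identifies the pushforward $(r_P)_!\IC_\mu$ with the $K_M$-equivariant perverse sheaf on $\Flag_M$ corresponding to $\Res^{G^\vee}_{M^\vee}V_\mu$; for the possibly non-connected groups $G^\vee$ and $M^\vee$ (the $I$-fixed points of the dual groups of $G$ and $M$) this bookkeeping is the one of \cite[\S 5]{Haines:Dualities}. Decomposing $\Res^{G^\vee}_{M^\vee}V_\mu=\bigoplus_{\mu_M}V_{\mu_M}^M\otimes\Hom_{M^\vee}\bigl(V_{\mu_M}^M,\Res^{G^\vee}_{M^\vee}V_\mu\bigr)$ and comparing, cell by cell over $\Flag_M$, the perverse amplitude and stalk rank of $(r_P)_!\IC_\mu$ with the dimension $d(\mu,\mu_M)$ found in (ii), one obtains that equality holds in (ii) exactly when $V_{\mu_M}^M$ occurs in $\Res^{G^\vee}_{M^\vee}V_\mu$, and in that case that the top-dimensional irreducible components of the fibre are in canonical bijection with a basis of $\Hom_{M^\vee}(V_{\mu_M}^M,\Res^{G^\vee}_{M^\vee}V_\mu)$, so that their number equals the multiplicity; the case $M=T$ yields in particular the numerical claims invoked in \S\ref{ssect-satake}. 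An alternative, perhaps cleaner in the ramified case, is the bootstrap of \cite[Prop.~5.4.2]{GHKR:DimADL}: first handle $M=T$ by ramified Mirkovi\'c--Vilonen theory, then factor $U=U_M\cdot N$ and induct.

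The hard part will be (iii): one must make the hyperbolic-localization description of $\Res^{G^\vee}_{M^\vee}$ rigorous in the ramified setting --- that is, verify that Braden's theorem, the compatibility of the ramified Satake equivalence with the parabolic $\Flag_P$, and the identification of $(r_P)_!\IC_\mu$ with a restriction all survive the passage to $I$-fixed points, where $G^\vee$ and $M^\vee$ may be non-connected --- or else organise the proof so that only the already-available combinatorics of \cite{HainesRostami:RamifiedSatake} and the reduction $U=U_M\cdot N$ to the Borel case are used. By contrast, (i) and (ii) should be routine: the split-group arguments of \cite[Lemma~5.4.1]{GHKR:DimADL} go through once every intersection of root groups is replaced by the corresponding intersection of the groups $U'_{\alpha,m}$, exactly as in \S\ref{sect-group-theory}.
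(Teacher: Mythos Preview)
Your proposal is sound in outline but takes a genuinely different route from the paper, and it is worth knowing how much shorter the paper's path is. For parts (ii) and (iii) the paper does not touch the geometric Satake equivalence, hyperbolic localisation, or Braden's theorem at all: it simply observes that the argument of \cite[Prop.~5.4.2]{GHKR:DimADL} is a formal manipulation of Hecke-algebra identities which reduces everything to the Lusztig--Kato formula, and that this formula is already available for quasi-split groups with very special level by \cite[eq.~(7.9)]{Haines:Dualities} (via Knop's general framework \cite{Knop:KazhdanLusztigBasis}). Once that single input is cited, the split-group computation carries over verbatim. Your approach via $(r_P)_!\IC_\mu$ and the compatibility of ramified Satake with parabolic restriction would also work --- and is arguably more geometric, giving the canonical bijection of components with a basis of the multiplicity space rather than just a count --- but it requires you to verify that the parabolic compatibility statement holds in the ramified (and possibly non-connected-dual) setting, which is exactly the ``hard part'' you flagged; the paper sidesteps this entirely.

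For part (i) the paper again follows \cite[Lemma~5.4.1]{GHKR:DimADL} directly, reducing to the Borel case via the factorisation $U=U_M\cdot N$ and invoking the characterisation $U(\L)\unif^{\mu'}\cap K\unif^\mu K\neq\emptyset\Leftrightarrow\mu'\in\Sigma(\mu)$ from \cite[Lemma~10.2.1]{HainesRostami:RamifiedSatake}. Your closure-and-maximality argument via $r_P$ is correct once you note that $\cl{K\unif^\mu K/K}$ is irreducible, so that the preimage of the open $M$-Schubert cell through a $\leq_M$-maximal $\mu'$ is a nonempty open meeting the dense open cell $K\unif^\mu K/K$; but as written this step is implicit, and the paper's reduction-to-$T$ argument is both quicker and already packaged in the literature.
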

   \begin{proof}
    The proof of the first statement is identical to the proof in \cite{GHKR:DimADL}, as it is derived from the statement that $U(L)\unif^{\mu'} \cap K\unif^\mu K \not= \emptyset$ if and only if $\mu' \in \Sigma(\mu)$ (\cite[Lemma~10.2.1]{HainesRostami:RamifiedSatake}).
    
   The proof of the second statement is also identical to the proof in \cite{GHKR:DimADL}, as it follows via formal calculations from the Lusztig-Kato formula. The Lusztig-Kato formula for quasi-split groups is deduced in \cite[eq.~(7.9)]{Haines:Dualities} using the general framework of Knop \cite{Knop:KazhdanLusztigBasis}. 
   \end{proof}
  
 This proposition has the following geometric interpretation. We denote by $\Flag_{G,\mu} = K \unif^\mu K / K$ and $S_\lambda = U(\L)\unif^\lambda K / K$ resp.\ $S_\lambda^M = U_M(\L) \unif^\lambda K_M/K_M$.
 \begin{prop} \label{prop-intersection-geom}
  The image of the morphism $p_{\lambda,\mu} \coloneqq \restr{p}{S_\lambda \cap \Flag_{G,\mu}}$ equals  $\bigcup_{\mu_M \in S_M(\mu)} S_\lambda^M \cap \Flag_{M,\mu_M}$.
  Moreover, there exists a canonical bijection
  \[
   p_{\lambda,\mu}^{MV} \colon MV_\mu(\lambda) \bij \bigcup_{\mu_M \in S_M(\mu) \atop V_{\mu_M}^M \textnormal{occurs in} \restr{V_\mu}{M^\vee}} MV_\mu^M(\lambda) \times \Irr(N(\L)\unif^{\mu_M} \cap K\unif^\mu K)
  \]
  such that for $C \in MV_{\mu_M}^M(\lambda)$, we have $p_{\lambda,\mu}\iv( {C} \times \Irr(N(\L)\unif^{\mu} \cap K\unif^\mu K)) = \Irr \cl{p_{\lambda,\mu}\iv(C)}$
 \end{prop}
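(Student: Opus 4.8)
The plan is to prove this as the ``$b$-free'' companion of the fibration $\pi$ of \S\ref{ssect-reduction-to-levi}, transporting the split-group computation of \cite[\S5]{GHKR:DimADL} to our setting; the only genuinely new inputs are the ramified Iwasawa decomposition \cite[Lemma~10.2.1]{HainesRostami:RamifiedSatake} and the ramified Lusztig--Kato formula, both already packaged in Proposition~\ref{prop-intersection}. First I would lift $S_\lambda\cap\Flag_{G,\mu}$ into $\Flag_P$: since $U(\L)\unif^\lambda\subset P(\L)$, the rule $u\unif^\lambda K\mapsto u\unif^\lambda K_P$ is a well-defined section of $i$ over the orbit $S_\lambda$ (well defined since $u_1\unif^\lambda K=u_2\unif^\lambda K$ forces $\unif^{-\lambda}u_1\iv u_2\unif^\lambda\in K\cap U(\L)\subset K_P$), and $p_{\lambda,\mu}$ is $p$ composed with it. Decomposing $U=U_MN$, $u=u_Mn$, one reads off $p_{\lambda,\mu}(u\unif^\lambda K)=u_M\unif^\lambda K_M\in S^M_\lambda$. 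For $\mu_M\in X_\ast(T)_{I,M\text{-dom}}$ with $S^M_\lambda\cap\Flag_{M,\mu_M}\neq\emptyset$, picking a point $u_M\unif^\lambda K_M$ of that stratum and a Cartan decomposition $u_M\unif^\lambda=k_1\unif^{\mu_M}k_2$ ($k_i\in K_M$) and pushing $n$ through yields $u_Mn\unif^\lambda=k_1\,n'\,\unif^{\mu_M}k_2$, where $n\mapsto n'$ is a bijection of $N(\L)$ (a chain of conjugations by $\unif^\lambda$, $k_2$, $\unif^{\mu_M}$), so $u_Mn\unif^\lambda\in K\unif^\mu K$ iff $n'\unif^{\mu_M}\in K\unif^\mu K$. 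Hence this stratum meets the image of $p_{\lambda,\mu}$ — and then lies in it entirely — precisely when $N(\L)\unif^{\mu_M}\cap K\unif^\mu K\neq\emptyset$, i.e.\ $\mu_M\in S_M(\mu)$; this gives the first assertion.

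Next I would run the same computation with the Cartan decomposition $u_M\unif^\lambda=k_1\unif^{\mu_M}k_2$ varying in families over $\Flag_{M,\mu_M}$ (pulled back to the stratum). This exhibits $p_{\lambda,\mu}$, over each stratum $S^M_\lambda\cap\Flag_{M,\mu_M}$ with $\mu_M\in S_M(\mu)$, as a fibration whose fibre is, up to a torsor under a connected group (irrelevant for dimension and for $\Irr$), the admissible set $N(\L)\unif^{\mu_M}\cap K\unif^\mu K$; over this stratum $p_{\lambda,\mu}\iv$ is therefore equidimensional of fibre dimension $d(\mu,\mu_M)$ over every point, with irreducible components indexed by $\Irr(S^M_\lambda\cap\Flag_{M,\mu_M})\times\Irr(N(\L)\unif^{\mu_M}\cap K\unif^\mu K)$. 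By Proposition~\ref{prop-intersection}, $d(\mu,\mu_M)$ attains $\langle\rho,\mu+\mu_M\rangle-2\langle\rho_M,\mu_M\rangle$ — the value needed for $p_{\lambda,\mu}\iv$ of the stratum to reach the full dimension of $S_\lambda\cap\Flag_{G,\mu}$ — exactly when $V^M_{\mu_M}$ occurs in $\restr{V_\mu}{M^\vee}$, with the number of top components of the fibre then equal to the multiplicity $[\restr{V_\mu}{M^\vee}:V^M_{\mu_M}]$; for all other $\mu_M\in S_M(\mu)$ the preimage has strictly smaller dimension.

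Finally I would assemble the bijection. Since $S_\lambda\cap\Flag_{G,\mu}$ is itself equidimensional with $\Irr(S_\lambda\cap\Flag_{G,\mu})=MV_\mu(\lambda)$ (\S\ref{ssect-satake}), and the strata $S^M_\lambda\cap\Flag_{M,\mu_M}$ for distinct $\mu_M$ lie in distinct affine Schubert cells of $\Flag_M$, a top component $\tilde C$ of $S_\lambda\cap\Flag_{G,\mu}$ lies over exactly one stratum, with $\mu_M\in S_M(\mu)$ and $V^M_{\mu_M}\subset\restr{V_\mu}{M^\vee}$, and maps onto a unique $C\in MV^M_{\mu_M}(\lambda)$; over such an (irreducible) $C$, $\cl{p_{\lambda,\mu}\iv(C)}$ is a fibration with fibre $N(\L)\unif^{\mu_M}\cap K\unif^\mu K$, so $\Irr\cl{p_{\lambda,\mu}\iv(C)}\riso\Irr(N(\L)\unif^{\mu_M}\cap K\unif^\mu K)$ and all these components are topdimensional in $S_\lambda\cap\Flag_{G,\mu}$. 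Collecting over all such $\mu_M$ and all $C$ gives the bijection $p_{\lambda,\mu}^{MV}$ with $\Irr\cl{p_{\lambda,\mu}\iv(C)}=(p_{\lambda,\mu}^{MV})\iv(\{C\}\times\Irr(N(\L)\unif^{\mu_M}\cap K\unif^\mu K))$, inverse to $\tilde C\mapsto(\cl{p_{\lambda,\mu}(\tilde C)},\textnormal{generic fibre of }\tilde C\to\cl{p_{\lambda,\mu}(\tilde C)})$. That $p_{\lambda,\mu}^{MV}$ is the \emph{canonical} map — compatible with the MV-cycle/canonical-basis dictionaries of \S\ref{ssect-satake} and \S\ref{ssect-mv-tensor}, i.e.\ with the branching of canonical bases $\BB_\mu(\lambda)\hookrightarrow\bigsqcup_{\mu_M}\BB^M_{\mu_M}(\lambda)\times(\cdots)$ — is checked verbatim as in \cite[\S3.3]{XiaoZhu:CyclesShVar}.

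The real obstacle is the step above where $N(\L)\unif^{\mu_M}\cap K\unif^\mu K$ must be shown \emph{equidimensional} (so that the count is a genuine product, not an inequality) with dimension exactly $\langle\rho,\mu+\mu_M\rangle-2\langle\rho_M,\mu_M\rangle$ and exactly $[\restr{V_\mu}{M^\vee}:V^M_{\mu_M}]$ top components whenever that multiplicity is nonzero. These are precisely the output of Proposition~\ref{prop-intersection} — hence of the Lusztig--Kato formula for quasi-split groups, \cite[(7.9)]{Haines:Dualities} via \cite{Knop:KazhdanLusztigBasis} — together with the equidimensionality of the fibres of $\Flag_P\to\Flag_M$ over affine Schubert cells from \cite[\S5]{GHKR:DimADL}; once these are granted the remainder is the bookkeeping of \cite[\S5]{GHKR:DimADL} and \cite[\S3.3]{XiaoZhu:CyclesShVar} carried over unchanged.
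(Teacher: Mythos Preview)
Your proposal is correct and follows essentially the same route as the paper: identify the image via the pointwise Cartan decomposition $u_M\unif^\lambda=k_1\unif^{\mu_M}k_2$, upgrade this to a Zariski-local product structure (the paper isolates this step as Lemma~\ref{lemma-fibration}, invoking \cite[Rmk.~5.7]{HamacherViehmann:ADLVIrredComp} and \cite[Lemma~2.1]{HartlViehmann:Foliation} for the Zariski-local Cartan decomposition, with the ambiguity absorbed by the connected group $K_M\cap\unif^{\mu_M}K_M\unif^{-\mu_M}$), and then feed in the dimension and multiplicity statements of Proposition~\ref{prop-intersection}. Your ``real obstacle'' is not one: equidimensionality of $N(\L)\unif^{\mu_M}\cap K\unif^\mu K$ is neither claimed nor needed, since a Zariski-local product has its top-dimensional irreducible components in bijection with products of top-dimensional components of the factors regardless, and $\Irr$ in this paper means top-dimensional throughout.
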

 
 The main ingredient of the proof is the following lemma. 
 
 \begin{lem} \label{lemma-fibration}
  Let $\mu,\mu_M \in X_\ast(T)$ and $p_\mu\colon P(L) \cap K\unif^\mu K \to M(L)$ the canonical projection. There exists a Zariski cover  $V \to K_M\unif^{\mu_M}K_M$, such that 
  \[
   p_{\mu}\iv (K_M\unif^{\mu_M}K_M)_V \cong (N(L)\unif^{\mu_M}K \cap K \unif^\mu K) \times V. 
  \]
  This isomorphism is canonical up to postcomposing with $(K_M \cap \unif^{\mu_M} K_M \unif^{-\mu_M})$. In particular we obtain a canonical bijection
  \[
   \Irr(p_\mu\iv(Z)) \bij \Irr(Z) \times \Irr (N(L)\unif^{\mu_M}K \cap K \unif^\mu K)
  \]
 \end{lem}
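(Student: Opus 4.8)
Throughout I read the objects inside the affine flag varieties, as in the rest of this section: thus $p_\mu$ is the restriction of $p\colon \Flag_P \to \Flag_M$ to $\Flag_P \cap \Flag_{G,\mu}$, the double coset $K_M\unif^{\mu_M}K_M$ stands for its image, the Schubert cell $\Flag_{M,\mu_M}\subset\Flag_M$, and $F \coloneqq N(L)\unif^{\mu_M}K \cap K\unif^\mu K$ denotes the fibre of $p_\mu$ over the base point $\unif^{\mu_M}$, a locally closed subscheme of $\Flag_P$ of finite type. The plan is to realise $p_\mu$ over $\Flag_{M,\mu_M}$ as the associated bundle of an explicit torsor, and then trivialise it Zariski-locally.

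First I would record the equivariance. The parahoric $K_M \subset K$, acting by left translation, preserves $\Flag_P \cap \Flag_{G,\mu}$, and $p_\mu$ is $K_M$-equivariant. The left $K_M$-orbit of the base point sweeps out all of $\Flag_{M,\mu_M}$ (since $k\unif^{\mu_M}K_M = k\unif^{\mu_M}k'K_M$ for $k,k'\in K_M$) and has stabiliser exactly $H \coloneqq K_M \cap \unif^{\mu_M}K_M\unif^{-\mu_M}$; hence $\Flag_{M,\mu_M}\cong K_M/H$ as a $K_M$-space. Since $H$ fixes the base point it carries $F$ to itself, and one obtains $p_\mu\iv(\Flag_{M,\mu_M}) \cong K_M\times^{H}F$ as a scheme over $K_M/H$, with $H$ acting on $F$ by left translation; this is the action occurring in the statement.

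Next I would trivialise. Since $H$ contains the congruence subgroup $K_{M,n}$ for $n$ large, $K_M/H$ is of finite type and $K_M \to K_M/H$ is an $H$-torsor admitting Zariski-local sections: over the big cell one writes down a section from the root-group decomposition of $K_M$ recalled in \S\ref{sect-group-theory}, and $K_M$-translates of this cell cover $K_M/H$. Let $V=\coprod V_i \to \Flag_{M,\mu_M}$ be the resulting Zariski cover, with sections $s_i\colon V_i\to K_M$. Pulling back along $s_i$ gives isomorphisms $p_\mu\iv(V_i)\riso F\times V_i$, $x\mapsto (s_i(v)\iv x,\, v)$ with $v=p_\mu(x)$, which assemble to $p_\mu\iv(K_M\unif^{\mu_M}K_M)_V\riso F\times V$; replacing $s_i$ by $s_i h_i$ for an $H$-valued function $h_i$ postcomposes this with the $H$-action on $F$ (times the identity on $V$). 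This proves the first assertion, with the stated ambiguity.

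Finally, for the irreducible components: over any locally closed $Z$ (pulled back to some $V_i$) the isomorphism $p_\mu\iv(Z)\riso F\times Z$ induces $\Irr(p_\mu\iv(Z))\cong\Irr(Z)\times\Irr(F)$, and these glue to a bijection independent of all choices precisely when $H$ acts trivially on the finite set $\Irr(F)$, i.e.\ when $H$ is connected. This last structural input is the one step that really uses Bruhat--Tits theory: as in the split case, $H = K_M \cap \unif^{\mu_M}K_M\unif^{-\mu_M}$ decomposes as a direct product of $T(L)_1$ with affine root subgroups $U'_{\alpha,n_\alpha}$ (using $\unif^\lambda U'_{\alpha,m}\unif^{-\lambda}=U'_{\alpha,m+\langle\alpha,\lambda\rangle}$), each factor connected. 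This is the ramified analogue of the computation underlying \cite[Lemma~5.4.1]{GHKR:DimADL}, and is where it matters that $K$ is a special parahoric rather than an arbitrary bounded subgroup; I expect it to be the main obstacle, the remaining verifications being routine once it and the compatibility of the decompositions of \S\ref{sect-group-theory} under conjugation by $\unif^{\mu_M}$ are in hand.
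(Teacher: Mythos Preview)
Your approach is correct and is essentially the same as the paper's: both recognise that the fibration over $K_M\unif^{\mu_M}K_M$ is controlled by the left $K_M$-action with stabiliser $H=K_M\cap\unif^{\mu_M}K_M\unif^{-\mu_M}$, trivialise it via a Zariski-local section, and conclude the canonical bijection on irreducible components from the connectedness of $H$. The only difference is packaging: where you frame this as the associated bundle $K_M\times^H F$ and then construct sections of $K_M\to K_M/H$ by hand from the root-group decomposition, the paper simply cites \cite[Rmk.~5.7]{HamacherViehmann:ADLVIrredComp} and \cite[Lemma~2.1]{HartlViehmann:Foliation} for the Zariski-local factorisation $x=y_1\unif^{\mu_M}y_2$ with $y_1,y_2\in L^+\Kscr_M$, and then uses $g\mapsto y_1^{-1}g$ as the trivialisation. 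These are two descriptions of the same construction; your section $s_i$ plays the role of the paper's $y_1$.

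One small point: the paper states and proves the lemma at the level of $M(\L)$ (so $Z\subset K_M\unif^{\mu_M}K_M$ is an admissible subset of the loop group, not of $\Flag_M$), whereas you pass to $\Flag_M$ from the outset. Your version suffices for the applications in this section (where $Z$ is always right-$K_M$-stable), but be aware that the statement as written is slightly finer. Also, the paper asserts connectedness of $H$ without argument; your sketch via the Iwahori-type decomposition into $T(\L)_1$ and root groups is the right justification and fills that in.
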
 
 \begin{proof}
  By \cite[Rmk.~5.7]{HamacherViehmann:ADLVIrredComp} (see also \cite[Lemma~2.1]{HartlViehmann:Foliation}), there exists a Zariski cover $U \to  K_M\unif^{\mu_M}K_M$ such that the corresponding element $x \in LM(U)$ decomposes as $x = y_1 \unif^{\mu_M} y_2$ with $y_1,y_2 \in L^+\Kscr_M(U)$. For any $U$-scheme $S$ and $z \in LN(S)$, we have
  \[
   z\cdot y_1 \unif^{\mu_M} y_2 \in (K \unif^\mu K)(S) \Lra (y_1\iv z y_1) \cdot \unif^{\mu_M} \in (K \unif^\mu K)(S).
  \]
  Thus $g \mapsto y_1\iv g$ yields an isomorphism as claimed. Note that it only depends on the choice of $k_1$. For any other choice $k_1'$, we have $k_1'k_1\iv \in (K_M \cap \unif^{\mu_M} K_M \unif^{-\mu_M})(U)$. Thus the isomorphisms constructed in Claim~\ref{claim-cover} differ by postcomposing with $(K_M \cap \unif^{\mu_M} K_M \unif^{-\mu_M})$-conjugation.
  
  Since Zariski locally on $Z$ we have $p_\mu\iv(Z) \cong  Z \times (N(L)\unif^{\mu}K \cap K \unif^\mu K)$, we obtain a bijection of irreducible components as claimed. Note that since $(K_M \cap \unif^{\mu_M} K_M \unif^{-\mu_M})$ is a connected, thus irreducible, group scheme, it acts trivially on the set of irreducible components. Hence the bijection is canonical, as claimed. 
 \end{proof}
 
  \begin{proof}[Proof of Proposition~\ref{prop-intersection-geom}] 
  We note that by precious lemma the fibre over an element $g \in K_M \mu_M K_M$ is isomorphic to $(N(L)\unif^{\mu}K \cap K \unif^\mu K)/K_N$. In particular, it is non-empty if and only if $\mu_M \in S_M(\mu)$, proving the first claim. Moreover, we deduce that for every  $\mu_M \in S_M(\mu)$ and $C_M \in MV_{\mu_M}(\lambda)$ the preimage is equidimensional of dimension
 \begin{align*}
  \dim p_{\lambda,\mu}\iv(C_M) &= \dim C_M + d(\mu,\mu_M) \\
  &= \langle 2\rho_M,\mu_M+\lambda\rangle + d(\mu,\mu_M) \\
  &\leq \langle \rho, \mu+\mu_M \rangle - \langle 2\rho_M,\lambda \rangle \\
  &=\langle \rho,\mu+\lambda \rangle +  \langle \rho - \rho_M, \rangle \\
  &=\langle \rho,\mu+\lambda \rangle = \dim S_\lambda \cap \Flag_{\mu}.
 \end{align*}
 Here the last step follows from the fact that $\mu_M - \lambda$ is a a linear combination of coroots of $M$, as otherwise $S_\lambda^M \cap \Flag_{M,\mu_M} = \emptyset$. Since equality holds if and only if $V_{\mu_M}^M$ is a summand of $\restr{V_\mu}{M^\vee}$, the closure of $p_{\lambda,\mu}\iv(C)$ is a union of topdimensional irreducible components precisely in this case. By the second part of Lemma~\ref{lemma-fibration} applied to $Z = U_M(L)\unif^\lambda K_M \cap K:M \unif^{\mu_M} K_M$ with $\mu_M$ as above, the proposition follows.
 \end{proof} 
 
\subsection{} In order to study the geometry of $\pi^{-1}(mK_M)$, we introduce the following notation. We fix $\delta_N \coloneqq \unif^{2\rho_N^\vee}$, which is an $N$-dominant central element of $M$ and define for $m\in \NN_0, i \in \ZZ$ the subgroup $N(i)_m \coloneqq \delta_N^i K_{N,m} \delta_N^{-i}$. This gives us an exhaustive and separated filtration
\[
 \dotsb \supset N(-1) \supset N(0) \supset N(1) \supset \dotsb 
\]
of $N(L)$ by admissible subgroups as well as a separated filtration by normal subgroups
\[
 N(i) = N(i)_0 \supset N(i)_1 \supset \dotsb 
\]
 Moreover, we consider the derived series 
 \[
  N = N[0] \supset N[1] \supset \dots \supset N[r] = \{0\}
 \]
 and let $N\langle 1 \rangle,\ldots,N\langle r\rangle$ denote its (Abelian) subquotients. We define the notion of admissibility, dimension etc.\ of subset of $N(L), N[i](L), N\langle i \rangle(L)$ analogous to $G$. Moreover, we define these notions on the fibre in $N[i]$ over any $n'' \in N\langle i+1 \rangle$ per transport of structure after choosing an equivariant isomorphism with subgroup $N[i+1]$. This does not depend on the choice of this identification as Lemma~\ref{lemma-N} below shows.

\begin{lem} \label{lemma-M}
 Let $m \in M(L)$. Then $\Ad(m)\colon N(L) \to N(L), n \mapsto m n m\iv$ preserves admissible subsets and reduces their dimension by  $\val\det \Ad_{\Lie N}(m) = \ \langle 2\rho_N, \nu_M(b) \rangle$.
\end{lem}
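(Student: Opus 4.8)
The plan is to treat the two assertions in turn --- that $\Ad(m)$ sends admissible subsets to admissible subsets, and that it shifts their renormalised dimension by $\val\det\Ad_{\Lie N}(m)$ --- and then to identify this last integer with a pairing against the Newton point. Here I read the right-hand side of the displayed equality as $\langle 2\rho_N,\nu_M(m)\rangle$ in general; the form $\langle 2\rho_N,\nu_M(b)\rangle$ is its value in the case relevant in \S\ref{sect-reduction-step}, where $m$ ranges over elements of the $\sigma$-conjugacy class $[b]_M$ (namely $m = m'^{-1}b\sigma(m')$).

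For admissibility: $\Ad(m)$ is an automorphism of the algebraic group $N_{\L}$ (as $M$ normalises $N$), hence induces an automorphism of the loop group $LN$ carrying each congruence subgroup $K_{N,n}$ isomorphically onto the compact open subgroup $mK_{N,n}m\iv$ of $N(\L)$. Since $mK_{N,n}m\iv$ contains $K_{N,n'}$ for $n' \gg 0$, the projection $LN/K_{N,n'} \epi LN/mK_{N,n}m\iv$ is a torsor under the finite type group scheme $mK_{N,n}m\iv/K_{N,n'}$, so preimages of locally closed subschemes are locally closed. Therefore, if $X \subset N(\L)$ is admissible --- the preimage of a locally closed $\Xbar \subset N(\L)/K_{N,n}$, which we may take $K_{N,n}$-stable --- then $\Ad(m)(X)$ is the preimage, under $LN \to LN/mK_{N,n}m\iv$, of $\Ad(m)(\Xbar)$, hence also the preimage of a locally closed subscheme of $LN/K_{N,n'}$, so it is admissible. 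As in Lemma~\ref{lemma-admissible} this is uniform in equal and mixed characteristic (perfect schemes in the latter case).

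For the dimension shift, introduce for commensurable compact open subgroups $\Lambda_1,\Lambda_2 \subset N(\L)$ the relative dimension $[\Lambda_1:\Lambda_2] \coloneqq \dim\bigl(\Lambda_1/(\Lambda_1\cap\Lambda_2)\bigr) - \dim\bigl(\Lambda_2/(\Lambda_1\cap\Lambda_2)\bigr)$. A direct computation with the projections out of $LN/K_{N,n'}$ for $n' \gg 0$ shows that the renormalised dimension $\dim_{\Lambda_i}$ computed relative to $\Lambda_i$ and its own congruence filtration satisfies $\dim_{\Lambda_2}Y = \dim_{\Lambda_1}Y + [\Lambda_1:\Lambda_2]$ for every admissible $Y \subset N(\L)$. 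Moreover $\Ad(m)$ identifies $X/K_{N,n}$ with $\Ad(m)(X)/mK_{N,n}m\iv$ and $K_N/K_{N,n}$ with $mK_Nm\iv/mK_{N,n}m\iv$, giving $\dim_{K_N}X = \dim_{mK_Nm\iv}\Ad(m)(X)$; combining this with the previous identity for $\Lambda_1 = mK_Nm\iv$, $\Lambda_2 = K_N$ yields
\[
 \dim_{K_N}\Ad(m)(X) = \dim_{K_N}X - [K_N : mK_Nm\iv].
\]
Finally $[K_N : mK_Nm\iv] = \val\det\Ad_{\Lie N}(m)$: both sides are additive along the lower central series of $N$, which $\Ad(m)$ respects and along which $K_N$ --- chosen, via Bruhat--Tits, as the ordered product of the $U_\alpha(\Ocal)$ adapted to the height filtration --- is compatibly filtered; this reduces the claim to the case of a vector group, where $K_N$ is an $\Ocal$-lattice, $\Ad(m)$ is $\L$-linear, and the identity is the elementary divisor formula. (Conceptually: $\Ad(m)$ scales a top-degree translation-invariant form on $N$, hence scales Haar measure, by $\det\Ad_{\Lie N}(m)$.)

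It remains to identify $\val\det\Ad_{\Lie N}(m)$ with $\langle 2\rho_N,\nu_M(m)\rangle$. The character $\det\Ad_{\Lie N}$ of $M$ is $\sigma$-stable (as $P$ is $F$-rational) and restricts on $T$ to the sum of the roots occurring on $\Lie N$ with multiplicity, which is $2\rho_N$. For any $\sigma$-stable character $\chi$ of $M$ one has $\val\chi(m) = \langle\chi,\nu_M(m)\rangle$: this is part of the characterisation of the Newton point, since applying $\chi$ to a norm $m\sigma(m)\cdots\sigma^{s-1}(m)$ and dividing by $s$ recovers $\val\chi(m)$, as $\val$ is $\sigma$-invariant. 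Hence $\val\det\Ad_{\Lie N}(m) = \langle 2\rho_N,\nu_M(m)\rangle$, which equals $\langle 2\rho_N,\nu_M(b)\rangle$ when $m \in [b]_M$. The main nuisance in all of this is the bookkeeping with the non-congruence subgroups $mK_{N,n}m\iv$ and the passage to abelian subquotients; neither is a genuine obstacle, and one may alternatively reduce first to $m = \unif^{\mu_M}$ by the Cartan decomposition of $M(\L)$ (conjugation by $K_M$ preserves $K_N$ and has unit determinant on $\Lie N$) and then compute on root subgroups directly, where $\Ad(\unif^{\mu_M})$ rescales the $U_\alpha$-coordinate by $\unif^{\langle\alpha,\mu_M\rangle}$ and the shift is visibly $\sum_\alpha (\dim U_\alpha)\langle\alpha,\mu_M\rangle = \langle 2\rho_N,\mu_M\rangle$.
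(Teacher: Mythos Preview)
Your argument is correct and is essentially the argument the paper defers to: the paper's own proof consists only of the sentence ``This follows by the same argument as when $G$ is split (\cite[(5.3.3)]{GHKR:DimADL}, see also \cite[Cor.~10.13]{Hamacher:NewtonPELSh})'', and what you have written is precisely an unpacking of that split-case argument with the bookkeeping needed for the quasi-split situation (congruence filtrations of $N$, reduction along the lower central series, and the identification of the determinant character with $2\rho_N$). Your reading of the right-hand side as $\langle 2\rho_N,\nu_M(m)\rangle$ for general $m$, specialising to $\langle 2\rho_N,\nu_M(b)\rangle$ when $m\in[b]_M$, is also the intended one; as stated the lemma only makes sense in the context of \S\ref{sect-reduction-step} where it is applied to $m=b_m$.
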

\begin{proof}
 This follows by the same argument as when $G$ is split (\cite[(5.3.3)]{GHKR:DimADL}, see also \cite[Cor.~10.13]{Hamacher:NewtonPELSh}).
\end{proof}

\begin{lem} \label{lemma-N}
 Let $0 \leq i \leq r-1$
 \begin{subenv}
 \item For any $n \in N[i]$ the isomorphism $\Ad_{N[i+1]}(n) \colon N[i+1] \isom N[i+1]$ preserves admissibility and for any admissible subset $Y$ we have $\dim \Ad_n(Y) = \dim Y$. In particular the notion of admissibility and dimension on the fibres of $N[i] \epi N[i+1]$ is well-defined.
 \item Let $X\subset N[i]$ be an admissible subset such that its image in $N\langle i \rangle$ is admissible of dimension $d_1$ and such that the dimension of the fibres of the map $X \to N\langle i \rangle$ are equidimensional of dimension $d_2$ then $\dim X = d_1+d_2$.
 \end{subenv}
\end{lem}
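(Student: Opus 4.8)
The plan is to handle the two assertions in turn: (a) will follow from the analogue of Lemma~\ref{lemma-M} for the pair $(N[i],N[i+1])$, and (b) from the fibre-dimension theorem applied to a suitable truncation of the map $X\to\ol X$.

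For (a), observe first that $N[i+1]=[N[i],N[i]]$ is a characteristic subgroup of $N[i]$, so conjugation by an element $n\in N[i](\L)$ defines an automorphism $\Ad(n)$ of the unipotent group $N[i+1]$. The proof of Lemma~\ref{lemma-M} (cf.~\cite[(5.3.3)]{GHKR:DimADL}) uses only that the conjugating element normalises the unipotent group in question; applied with $M$ replaced by $N[i]$ and $N$ by $N[i+1]$, it shows that $\Ad(n)$ preserves admissible subsets of $N[i+1](\L)$ and changes their dimension by $-\val\det\Ad_{\Lie N[i+1]}(n)$. Since $n$ is a unipotent element, it acts unipotently in the adjoint representation on $\Lie N[i]$, hence on the submodule $\Lie N[i+1]$, so this determinant equals $1$ and dimensions are preserved. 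For the last clause of (a): a fibre of $N[i]\epi N[i]/N[i+1]$ is a coset $nN[i+1]$, which one identifies with $N[i+1]$ either via $x\mapsto n^{-1}x$ or via $x\mapsto xn^{-1}$; changing the lift $n$ alters such an identification by a left translation by an element of $N[i+1](\L)$, while passing between the two identifications alters it by $\Ad(n)$. By Lemma~\ref{lemma-admissible} together with the first part of (a), each of these preserves admissibility and dimension, so the induced notions on the fibre are independent of all choices.

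For (b), we may assume $X$ bounded, the general case following by taking the supremum over bounded pieces. Choose $m\in\NN$ large enough that $X$ is the preimage of a locally closed subscheme $X'\subset N[i](\L)/K_{N[i],m}$ and, after choosing integral models compatibly along $1\to N[i+1]\to N[i]\to N[i]/N[i+1]\to1$, that $\ol X=p(X)$ is the preimage of a locally closed subscheme $\ol X'\subset(N[i]/N[i+1])(\L)/K_{N[i]/N[i+1],m}$ with the projection inducing a surjection $f\colon X'\epi\ol X'$. Every fibre of $f$ is a torsor under $N[i+1](\L)/K_{N[i+1],m}$, and under this identification the fibre over the class of $\bar n$ is the image of $n^{-1}\bigl(X\cap nN[i+1](\L)\bigr)$; by part~(a) this is precisely the locally closed subscheme attached to the fibre of $X\to\ol X$ over $\bar n$. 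Unwinding the definition of $\dim_K$, the hypotheses say that $\dim\ol X'=d_1+\dim\bigl(K_{N[i]/N[i+1]}/K_{N[i]/N[i+1],m}\bigr)$ and that every fibre of $f$ is equidimensional of dimension $d_2+\dim\bigl(K_{N[i+1]}/K_{N[i+1],m}\bigr)$. By the fibre-dimension theorem --- a surjective morphism $Y\epi Z$ of finite-type (resp.\ perfectly finite-type) $k$-schemes all of whose fibres are equidimensional of dimension $e$ satisfies $\dim Y=\dim Z+e$ --- we obtain
\[
 \dim X'=d_1+d_2+\dim\bigl(K_{N[i]/N[i+1]}/K_{N[i]/N[i+1],m}\bigr)+\dim\bigl(K_{N[i+1]}/K_{N[i+1],m}\bigr).
\]
Since dimension is additive along the above short exact sequence, $\dim\bigl(K_{N[i]}/K_{N[i],m}\bigr)$ is the sum of the last two terms; subtracting it yields $\dim_K X=d_1+d_2$.

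The step I expect to be the main obstacle is the first assertion of (a): verifying that the modulus computation behind Lemma~\ref{lemma-M} really does go through verbatim for conjugation on a unipotent group by a possibly unbounded unipotent element of an ambient unipotent group. Granting this, the unipotence of $\Ad(n)$ makes (a) immediate, and (b) is then a routine application of the fibre-dimension theorem together with the accounting of congruence subgroups.
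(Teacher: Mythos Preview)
Your argument for part~(2) is essentially the paper's: truncate modulo a suitable congruence subgroup, apply the fibre-dimension theorem to the induced map of finite-type schemes, and account for dimensions along the short exact sequence $1\to N[i+1]\to N[i]\to N\langle i\rangle\to 1$. The paper writes this out as a chain of equalities rather than invoking a named theorem, but the content is the same.

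For part~(1) the paper takes a more direct route that sidesteps exactly the obstacle you flag at the end. Instead of appealing to the mechanism behind Lemma~\ref{lemma-M} and then arguing that $\val\det\Ad_{\Lie N[i+1]}(n)=0$ by unipotence, it simply observes that any $n\in N[i](\L)$ lies in $N[i](j)$ for some $j\gg 0$, and since $N[i+1](j)_m = N[i](j)_m\cap N[i+1]$ is \emph{normal} in $N[i](j)$, conjugation by $n$ descends to an automorphism of each finite-type quotient $N[i+1](\L)/N[i+1](j)_m$. Preservation of admissibility and dimension is then immediate. Your approach is not wrong in spirit, but checking that ``the proof of Lemma~\ref{lemma-M} goes through verbatim'' for a possibly unbounded unipotent element acting on a unipotent subgroup amounts, once unwound, to exactly this normality observation: the boundedness of $n$ (which you implicitly use via unipotence) is what makes the congruence filtration $\Ad(n)$-stable. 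The paper's argument is thus both shorter and avoids any dependence on the specific structure of the filtration in \cite[(5.3.3)]{GHKR:DimADL}.
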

\begin{proof}
 Let $j$ be big enough such that $n \in N[i](j)$. Since $N[i+1](j)_m = N[i](j)_m \cap N[i+1]$ is normal in $N[i](j)$, the map $Ad_{N[i+1]}(n)$ induces an isomorphism  $N[i+1](L)/N(j)_m \isom N[i+1](L)/N(j)_m$, proving the first assertion. To prove the second assertion, we choose an $m \in \NN$ such that $X$ is $N[i]_m$-stable. Denote  by $Y$ the image of $X$ in $N\langle i \rangle$. Then the fibres of $X/N[i]_m \to Y/N\langle i+1 \rangle_m$ have dimension $d_2 - \dim N[i+1]_m$ and thus
 \begin{align*}
  \dim X &= \dim X/N[i]_m + \dim N[i]_m \\ 
  &= \dim  Y/N\langle i \rangle_m + d_2 - \dim N[i+1]_m + \dim N[i]_m \\
  &= d_1 + d_2 - \dim N\langle i \rangle - \dim N[i+1]_m + \dim N[i]_m \\
  &= d_1+d_2.
 \end{align*}
\end{proof}

As a first step to calculate the dimension of $\pi^{-1}(mK_M)$ as in \ref{sect-XP}, we now calculate the fibres of $L_{1,b_m}$. Note that unless $b$ is basic, $L_{1,b_m}$ is no longer a Lang morphism and will in general not be (pro-)\'etale.

Since it simplyfies the proof, we consider the more general setup of
\[
 L_{m_1,m_2}\colon N(L) \to N(L), n \mapsto m_1 n^{-1} m_1\iv \cdot m_2 \sigma(n) m_2\iv.
\] 
Following the proof in \cite{GHKR:DimADL}, we show that its fibre dimension coincides with the fibre dimension of $\Lie N \to \Lie N, X \mapsto \Ad(m_1)(X) - \Ad(m_2)(\sigma(X))$, which equals
\[
 d(m_1,m_2) \coloneqq d(\Lie N, \Ad(m_1\iv m_2) \sigma) + \val\det \Ad_{m_1},
\]
by \cite[Prop.~4.2.2]{GHKR:DimADL}. Here we denote for an $F$-space $(V,\Phi)$
\[
 d(V,\Phi) \coloneqq - \sum_{\lambda < 0} \lambda \cdot V_\lambda. 
\]

\begin{prop}[cf.~{\cite[Prop.~5.3.2]{GHKR:DimADL}}] \label{prop-reldim-Lang}
 The map $L_{m_1,m_2}\colon N(L) \to N(L)$ is surjective. If $Y \subset N(L)$ is an admissible subset then $L_{m_1,m_2}^{-1}(Y)$ is ind-admissible of dimension $\dim X + d(m_1,m_2)$.
\end{prop}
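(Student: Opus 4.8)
The plan is to follow the proof of \cite[Prop.~5.3.2]{GHKR:DimADL} essentially verbatim; the point is that each ingredient of that argument — Lemmas~\ref{lemma-M} and \ref{lemma-N} above, together with the linear-algebra statement \cite[Prop.~4.2.2]{GHKR:DimADL} about $F$-spaces over $\L$ — is already available in the ramified case, since none of them uses that the root system of $G$ is reduced or that $\sigma$ acts trivially. Recall that by definition $d(m_1,m_2) = d(\Lie N, \Ad(m_1\iv m_2)\sigma) + \val\det\Ad_{\Lie N}(m_1)$.

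First I would dispose of the case that $N$ is commutative. Identifying $N(\L)$ with $\Lie N(\L)$ compatibly with the $\Ad(m)$- and $\sigma$-actions (as in \cite{GHKR:DimADL}), the map $L_{m_1,m_2}$ becomes the additive map $X \mapsto \Ad(m_1)(X) - \Ad(m_2)(\sigma X) = \Ad(m_1)\bigl(\Ad(m_1\iv m_2)(\sigma X) - X\bigr)$. By \cite[Prop.~4.2.2]{GHKR:DimADL} the operator $X \mapsto \Ad(m_1\iv m_2)(\sigma X) - X$ attached to the $F$-space $(\Lie N, \Ad(m_1\iv m_2)\sigma)$ is surjective and carries an admissible set $Y'$ to an ind-admissible set of dimension $\dim Y' + d(\Lie N, \Ad(m_1\iv m_2)\sigma)$. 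Composing with $\Ad(m_1)$ and invoking Lemma~\ref{lemma-M} applied to $m_1\iv$ (which shifts dimensions by $+\val\det\Ad_{\Lie N}(m_1)$) then shows that $L_{m_1,m_2}$ is surjective and that $\dim L_{m_1,m_2}\iv(Y) = \dim Y + d(\Lie N, \Ad(m_1\iv m_2)\sigma) + \val\det\Ad_{\Lie N}(m_1) = \dim Y + d(m_1,m_2)$.

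Next I would induct on the derived length $r$ of $N$. Since $N[1]$ is characteristic, it is $\Ad(m_i)$- and $\sigma$-stable, so $L_{m_1,m_2}$ descends to a map of the same type on $N\langle 1\rangle(\L)$, which is commutative and hence handled by the previous step. Restricting $L_{m_1,m_2}$ to a fibre of $N(\L) \to N\langle 1\rangle(\L)$ and writing its elements as $n_0 n_1$ with $n_1 \in N[1](\L)$, one obtains, after a left translation and an $\Ad(n_0)$-conjugation, a map of the form $L_{m_1',m_2'}$ on $N[1](\L)$ (with $m_i'$ the $\Ad(n_0)$-conjugate of $m_i$), to which the inductive hypothesis applies inside $N[1]$, of derived length $r-1$. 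Feeding the resulting two dimension counts into Lemma~\ref{lemma-N}(2) and using that $d$ is additive in the short exact sequence $0 \to \Lie N[1] \to \Lie N \to \Lie N\langle 1\rangle \to 0$ of $F$-spaces (Newton slopes and $\val\det$ being additive) yields the dimension formula, while surjectivity follows by lifting successively through the filtration.

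I expect the only real work to lie in this last step — precisely the bookkeeping of \cite[\S~5.3]{GHKR:DimADL}. One must check that the auxiliary maps over $N\langle 1\rangle$ have equidimensional fibres and that the relevant subsets of the $N[i](\L)$ are admissible, so that Lemma~\ref{lemma-N}(2) is applicable, and one must verify that the $\Ad(n_0)$-conjugation twist introduced on passing to a fibre of $N \to N\langle 1\rangle$ leaves the invariant $d$ unchanged: the $\val\det$ contribution is unchanged since unipotent conjugation has determinant one, and the Newton slopes are unchanged because the twisted and untwisted $\sigma$-linear operators have the same associated graded along a common stable flag of $\Lie N[i]$. As all of this uses only the filtration structure of the unipotent radical together with $F$-space linear algebra over $\L$, it is insensitive to whether $G$ is split or merely quasi-split, and the argument of loc.~cit.\ carries over without change.
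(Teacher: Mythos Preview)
Your strategy matches the paper's: handle the abelian case via the $F$-space statement \cite[Prop.~4.2.2]{GHKR:DimADL}, then climb the derived series using Lemma~\ref{lemma-N}. Two points in your sketch deserve correction, though. First, your description of the fibre map is not quite right: writing an element as $n''n'$ with $n' \in N[i+1]$ and $n''$ a lift from $N\langle i+1\rangle$, one computes
\[
 L_{m_1,m_2}(n'n'') \;=\; \Ad\!\bigl(m_1 {n''}^{-1} m_1^{-1}\bigr)\bigl(L_{m_1,m_2}(n')\bigr)\cdot L_{m_1,m_2}(n''),
\]
so on a fixed fibre the map on $N[i+1]$ is the \emph{same} $L_{m_1,m_2}$ post-composed with conjugation by a unipotent element --- not an $L_{m_1',m_2'}$ with $m_i' = \Ad(n_0)(m_i)$ (note $\Ad(n_0)(m_i)\notin M(\L)$ anyway). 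This is what makes Lemma~\ref{lemma-N}(1) directly applicable and spares you the Newton-slope invariance argument you outline.

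Second, you are glossing over two genuine steps the paper carries out explicitly. One is a preliminary reduction, replacing $m_1,m_2$ by $\delta_N^j m_1,\delta_N^j m_2$ so that both lie in $M(\L)_+$; this ensures $L_{m_1,m_2}$ respects the filtration $N(i)$ and is what makes the direct ind-admissibility check (that $L_{m_1,m_2}^{-1}(Y)\cap N(0)$ is $N(0)_m$-stable) go through. The other is the passage from $Y$ of the special shape required by Lemma~\ref{lemma-N}(2) (admissible image in $N\langle i\rangle$, equidimensional fibres) to an arbitrary admissible $Y$: the inductive claim only yields the former, and the paper closes the gap by first treating $Y=N(0)$, then showing via $\delta_N$-conjugation and a finite-quotient argument on $N(0)/N(0)_m$ that all nonempty fibres of the induced map are translates of one another and hence have the common dimension $d(m_1,m_2)$. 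This last step is not mere bookkeeping, and your sketch does not account for it.
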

\begin{proof}
 We first make a couple of reduction steps. We choose $i \in \NN$ big enough, such that $\delta_N^j \cdot m_1, \delta_N^j \cdot m_2 \in M(L)_+ \coloneqq \bigcup_{\mu \in X_\ast(T)_{I,{\rm dom}}}$. That is, conjugation with $\delta_N^j m_1$ and $\delta_N^j m_2$ restrict to maps $N(i) \to N(i)$ for all $i$ and hence so does $L_{\delta_N^j \cdot m_1, \delta_N^j \cdot m_2}$. Replacing $m_1,m_2$ by $\delta_N^j \cdot m_1, \delta_N^j \cdot m_2$ replaces $L_{m_1,m_2}$ by $\delta_N^i L_{m_1,m_2} \delta_N^{-i}$, which does not change surjectivity or admissiblity of the preimage and changes the dimension of the preimage and its conjectured dimension both by $j \cdot \val\det_{\Lie N} \delta_N$. Thus we may assume without loss of generality that $m_1,m_2 \in M(L)_+$.
 
 Now let $Y \subset N(L)$ be an admissible subset, we choose $i,m$ such that $Y \subset N(i)$ is $N(i)_m$-stable. Since $L_{m_1,m_2}$ commutes with conjugation by $\delta_N$, we may assume without loss of generality that $i=0$.
 
 We first show ind-admissibility. Assume that $n\in L_{m_1,m_2}\iv (Y) \cap N(0)$. Then 
 \begin{align*}
  L_{m_1,m_2}(nN(0)_m) &\subset  m_1 N(0)_m n\iv m_1\iv m_2 \sigma(n) N(0)_m m_2\iv \\
  &\subset N(0)_m    m_1  n\iv m_1\iv m_2 \sigma(n)  m_2\iv N(0)_m \\
  &= L_{m_1,m_2}(n) N(0)_m = Y,
 \end{align*} 
 in particular $L_{m_1,m_2}\iv (Y) \cap N(0)$ is admissible. This implies that
 \[
  L_{m_1,m_2}\iv (Y) \cap N(-i) = \delta_N^{-i}\, ( L_{m_1,m_2}\iv (\delta_N^i Y \delta_N^{-i}) \cap N(0) )\, \delta_N^i
 \]
 is also admissible and hence $L_{m_1,m_2}\iv(Y)$ is ind-admissible. 
 
 In order to prove the rest of the statement, denote by $L_{m_1,m_2}[i]\colon Y[i] \to Y[i]$ and $L_{m_1,m_2}\langle i\rangle \colon Y\langle i\rangle \to Y\langle i\rangle$ the maps induced by $L_{m_1,m_2}$. The identical proof as above shows that the preimage for admissible subset of $N[i]$ or $N\langle i \rangle$ is ind-admissible.
 
 \begin{claimsub}
 Let $1 \leq i \leq r$.
 \begin{subenv}
  \item $L_{m_1,m_2}\langle i\rangle$ is surjective and there exists a $j \in \NN$ such that $N\langle i \rangle (j) \subset \image \restr{L_{m_1,m_2}\langle i \rangle}{N\langle i \rangle (0)}$.
  \item For every admissible subset $Y \subset N\langle i \rangle$ the dimension of $L_{m_1,m_2}\langle i \rangle\iv (Y)$ equals \[ \dim Y + d(\Lie N\langle i \rangle, \Ad(m_1\iv m_2)\sigma) + \val\det \Ad_{\Lie N\langle i \rangle}(m_1). \]
  We denote this expression by $d\langle i \rangle (m_1,m_2)$.
 \end{subenv}
 \end{claimsub}
 \begin{proof}
  Since $N\langle i \rangle$ is abstractly isomorphic to some power of the additive group $\GG_a^{n_i}$, we can identify $N\langle i \rangle \cong \Lie N\langle i \rangle \cong L^{n_i}$. With respect to this identification, we have $L_{m_1,m_2}(v) = \Ad(m_1)(v) - \Ad(m_2)\sigma(v)$ and thus the claim is proven in \cite[Prop.~4.2.2]{GHKR:DimADL}.
 \end{proof}

 \begin{claimsub}
 let $1 \leq i \leq r$.
 \begin{subenv}
  \item $L_{m_1,m_2}[i]$ is surjective and there exists a $j \in \NN$ such that $N[ i ] (j) \subset \image \restr{L_{m_1,m_2}\langle i \rangle}{N[i](0)}$.
  \item Let $Y \subset N[i](L)$ be an admissible subset such that its image $Y'' \in N\langle i \rangle$ is admissible and such that each of the fibres of $Y \epi Y''$ is admissible of same dimension $d$. Then the dimension of $L_{m_1,m_2}[ i ]\iv (Y)$ equals \[   \dim Y + d(\Lie N[i], \Ad(m_1\iv m_2)\sigma) + \val\det \Ad_{\Lie N\langle i \rangle}(m_1). \]
  We denote this expression by $d[i ] (m_1,m_2)$.
 \end{subenv}
 \end{claimsub}
 \begin{proof}
  We prove the claim by descending induction on $i$. Since $N[r-1] = N\langle r \rangle$, we have proven the induction beginning in the previous claim.
  
  Note that the root group decomposition gives us a lift $N\langle i+1 \rangle \to N[i]$ in the category of ind-schemes. Hence multiplication defines an isomorphism $N[i] = N\langle i+1\rangle \cdot N[i+1]$. Now for $n' \in N[i+1]$ and $n'' \in N\langle i+1 \rangle$ we have
  \begin{align*}
   L_{m_1,m_2}(n' n'') &= m_1 {n''}\iv {n'}\iv m_1\iv m_2 \sigma(n') \sigma(n'') m_2\iv \\
   &= m_1 n'' m_1\iv L_{m_1,m_2}(n') m_2 \sigma(n'') m_2\iv \\
   &= Ad(m_1 n'' m_1\iv)(L_{m_1,m_2}(n'))\cdot L_{m_1,m_2}(n'').
  \end{align*}
  Thus the first assertion follows directly by the previous claim and induction hypothesis since $N[i](j) = N[i+1](j) \cdot N\langle i+1\rangle(j)$. The image of $L_{m_1,m_2}\iv (Y)$ in $N\langle i \rangle$ has dimension $\dim Y'' +  d\langle i+1 \rangle(m_1,m_2)$ by the previous claim. Moreover, by Lemma~\ref{lemma-N}~(1) and the induction assumption each fibre of $L_{m_1,m_2}\iv (Y) \epi L_{m_1,m_2}\iv (Y'')$ is has dimension $d +  d[i](m_1,m_2)$. Thus by Lemma~\ref{lemma-N}~(2), we obtain the claimed dimension formula.
 \end{proof}
 
 Note that if $i=0$ this finishes the proof if $Y$ has the form as in the claim. In particular if $Y = N(0)$ we get $ \dim N(-j)\cap L_{m_1,m_2}\iv(N(0)) = d(m_1,m_2)$ for $j \gg 0$. Conjugating by $\delta_N^j$, we obtain $ \dim N(0) \cap L_{m_1,m_2}\iv(N(j)) = d(m_1,m_2) + \dim N(j)$. Let $m \gg 0$ such that $N(0)_m \subset N(j)$. Now $L_{m_1,m_2}$ defines a morphism of schemes $\Lbar_{m_1,m_2}\colon N(0)/N(0)_m \to N(0)/N(0)_m$. Since all non-empty fibres are translates of each other, they have the same dimension $d$ and we have 
 \begin{align*}
  \dim N(j) - \dim N(0)_m + d &= \dim \Lbar_{m_1,m_2}\iv(N(j)/N(0)_m) \\ &=  \dim N(j) + d(m_1,m_2) - \dim N(0)_m, 
  \end{align*}
  thus proving $d = d(m_1,m_2)$. In particular the theorem holds true for any $Y \subset \image \restr{L_{m_1,m_2}}{N(0)}$. But for any admissible $Y$ its conjugate by a high enough power of $\delta_N$ is contained in $\image \restr{L_{m_1,m_2}}{N(0)}$ by the second claim, finishing the proof.
 \end{proof}

 We can finally prove the main results.

 \begin{proof}[Proof of Thm.~\ref{thm-main} and Prop.~\ref{prop-ic}]
  By the previous propositions, we have for any $mK_M \in X_{\mu_M}^M(b)$
  \begin{align*}
   \dim \pi\iv (mK_M) &\stackrel{Prop.~\ref{prop-reldim-Lang}}{=} \dim_{N(0)} K\unif^\mu Kb_m\iv \cap N(L) \\
   &\stackrel{Lemma~\ref{lemma-M}}{=} d(\mu,\mu_M) - 2\langle \rho_N, \nu \rangle
   \shortintertext{ and thus }
   \dim \pi\iv(X_{\mu_M}^M(b)) &= d_M(\mu,b) + d(\mu,\mu_M)  - 2\langle \rho_N, \nu \rangle.
  \end{align*}
  The same calculations as in the split case (\cite[Prop.~5.6.1, Thm.~5.8.1]{GHKR:DimADL}) show that if we vary $\mu_M \in I_{\mu,b}$, the integer $d_G(\mu,b)$ is a sharp upper bound for the right hand side with equality if and only if $V_{\mu_M}$ occurs in $\restr{V_\mu}{M^\vee}$. Thus
  \[
   \dim X_\mu(b) = \dim X_\mu^{P \subset G}(b) = \max_{\mu_M \in I_{\mu,b}} \dim \pi\iv(X_{\mu_M}^M(b)) = d_G(\mu,b).
  \]
  
  To prove the second assertion, we note that $[b]_P = N(\L) \cdot [b]_M$ ( \cite[\S~3.6]{Kottwitz:GIsoc2}). We denote 
  \[
   \Irr'([b] \cap K\unif^\mu K) = \{\ell_b(C) \mid C \in \Irr \widetilde{X_\mu(b)}\},
  \]
  analogously for $[b]_P$ and $[b]_M$. While we conjecture that $\Irr'(\cdot ) = \Irr(\cdot )$, we did not prove this claim. By Proposition~\ref{prop-reldim-Lang} and Lemma~\ref{lemma-fibration}, an irreducible component of $[b]_P \cap K\unif^\mu K$ lies in $\Irr'([b]_P \cap K\unif^\mu K)$ if and only if it maps onto an element in $\Irr'([b]_M \cap K\unif^{\mu_M} K)$ where $V_{\mu_M}^M$ occurs in $\restr{V_\mu}{M^\vee}$.  Writing $\Sigma_{\mu,\mu_M} \coloneqq \Irr(N(\L) \unif^{\mu_M} \cap K\unif^\mu K)$, we thus obtain a diagram of bijections
  \begin{center}
   \begin{tikzcd}
    \Irr' ([b]_P \cap K \unif^\mu K) \ar[dashed]{r} \ar{d}{Lem.~\ref{lemma-fibration}} & \bigcup_{\restr{\lambda}{S^\vee} = \lambda([b])} MV_\mu(\lambda) \ar{d}{Prop.~\ref{prop-intersection-geom}} \\
    \bigcup\limits _{V_{\mu_M}^M \subset \restr{V_\mu}{M^\vee}} \Irr'([b]_M \cap K \unif^{\mu_M} K) \ar{r}{Thm.~\ref{thm-superbasic}} \times \Sigma_{\mu,\mu_M}
    & \bigcup\limits_{V_{\mu_M}^M \subset \restr{V_\mu}{M^\vee} \atop \restr{\lambda}{S^\vee} = \lambda([b])}  MV_{\mu_M}(\lambda) \times \Sigma_{\mu,\mu_M},
   \end{tikzcd}
  \end{center}
  where we choose the top map such that the diagram becomes commutative.   yielding a canonical surjection $\Irr X_\mu(b) \to MV_\mu(\lambda([b]))$. Since both sets have the same cardinality by \cite[Thm.~A.3.1]{ZhouZhu:IrredCompADLV}, this proves Thm.~\ref{thm-main}~(2). 
  
 Note that since $ \ell_b(N(\L)\cdot X) = N(\L) \cdot \ell_b(X)$ for any $X \subset M(L)$ by Proposition~\ref{prop-reldim-Lang}, we can deduce Proposition~\ref{prop-reldim-Lang} from the superbasic case and the description of $\Irr'([b]_P \cap K\unif^\mu K)$ above.  
  
 \end{proof}

 Lastly, we record two important special cases for the nature of the morphism $L_{1,b_m}$, which will be used in the following section.

 \begin{lem} \label{lem-Lang-bij}
   If $M = M_b$, then $L_{1,b_m}$ is a universal homeomorphism, or equivalently the induced map $LN(R) \to LN(R)$ is a bijection for all perfect $k$-algebras $R$.
 \end{lem}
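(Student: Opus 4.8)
The plan is to reduce the statement to a contraction/convergence argument for the $\sigma$-semilinear operator $\Psi \coloneqq \Ad(b_m)\sigma$ on $N(\L)$, exploiting that the hypothesis $M = M_b$ forces $\Psi$ to have strictly positive slopes on $\Lie N$. First I would unwind the hypothesis: since $b_m = m\iv b\sigma(m)$ is $\sigma$-conjugate to $b$ inside $M(\L)$ and $b$ is a standard representative (hence basic in $M_b$, see \S\ref{ssect-superbasic}), $b_m$ is basic in $M = M_b$ with $\nu_M(b_m) = \nu(b)$, which is central in $M$. As $N = R_u(P)$ for the parabolic $P = M_b N$ attached to $\nu(b)$, every root of $S$ occurring in $\Lie N$ pairs strictly positively with $\nu(b)$; since the slope grading of the $F$-isocrystal $(\Lie N, \Ad(b_m)\sigma)$ is compatible with the weight grading under a positive multiple of $\nu(b)$, all of its slopes are $>0$. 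In particular $d(\Lie N, \Ad(b_m)\sigma) = 0$, so by Proposition~\ref{prop-reldim-Lang} the morphism $L_{1,b_m}$ is already surjective with $0$-dimensional fibres; the work is to promote this to a genuine bijection on $R$-points.

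\textbf{The abelian case.} The key elementary input is: if $V$ is a vector group over $F$ and $\Psi = A\sigma$ a $\sigma$-semilinear automorphism of $V(\L)$ all of whose slopes are strictly positive, then $\Psi$ is topologically nilpotent on some (hence any) lattice, so for every perfect $k$-algebra $R$ the series $\sum_{j\geq 0}\Psi^j$ converges on $LV(R)$ and yields a two-sided inverse of $1-\Psi$; concretely, since every element of $LV(R)$ is bounded and $\Psi^j$ eventually maps a bounded region into an arbitrarily small neighbourhood of $0$, both convergence of $\sum_j \Psi^j(y)$ and the vanishing of $\ker(1-\Psi)$ are immediate. Under the identification of $V(\L)$ with its Lie algebra this says precisely that $L_{1,b_m}\colon n \mapsto n\iv\Psi(n) = -(1-\Psi)(n)$ is bijective on $LV(R)$. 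I would apply this to each abelian subquotient $N\langle i \rangle$ of the derived series $N = N[0] \supset \dotsb \supset N[r] = 1$; these subgroups are characteristic, hence $\Psi$-stable, and their Lie algebras inherit strictly positive slopes, so the hypothesis persists.

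\textbf{Dévissage.} Finally I would run a descending induction on $i$ showing that $L_{1,b_m}\colon LN[i](R) \to LN[i](R)$ is bijective for every perfect $R$. The base case $i = r-1$ is the abelian statement. For the inductive step I use the identity $L_{1,b_m}(n'n'') = \Ad(n'')\big(L_{1,b_m}(n')\big)\cdot L_{1,b_m}(n'')$ for $n' \in N[i+1]$, $n'' \in N\langle i+1\rangle$ (the $m_1 = 1$ case of the computation in the proof of Proposition~\ref{prop-reldim-Lang}, using the root-group splitting $N\langle i+1\rangle \hookrightarrow N[i]$). Reducing modulo $N[i+1]$ determines $n''$ uniquely from the image of a given target $y$ via the already-known bijectivity on $N\langle i+1\rangle$; then $n' \in N[i+1](R)$ is determined uniquely by $L_{1,b_m}(n') = \Ad(n'')\iv\big(y\, L_{1,b_m}(n'')\iv\big)$, noting that this element lies in $N[i+1](R)$, using the induction hypothesis for $N[i+1]$. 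Uniqueness follows by reversing the argument, and $n = n'n''$ solves $L_{1,b_m}(n) = y$. Taking $i = 0$ gives bijectivity of $L_{1,b_m}$ on $LN(R)$ for all perfect $R$, which is the asserted equivalent form of ``universal homeomorphism''.

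\textbf{Main obstacle.} I expect the delicate point to be the slope computation in the first paragraph — pinning down, with the correct sign conventions, that $M = M_b$ genuinely forces all slopes of $(\Lie N, \Ad(b_m)\sigma)$ to be positive (equivalently, that the $\Psi$-fixed points of $N(\L)$ are trivial and $d(1,b_m) = 0$) — together with making the topological-nilpotence/convergence argument fully rigorous over an arbitrary perfect base $R$, in particular in the mixed-characteristic case where one works $p$-adically in $W(R)[1/p]$.
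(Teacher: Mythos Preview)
Your proposal is correct and follows essentially the same route as the paper: establish positive slopes on $\Lie N$ from $M=M_b$, treat the abelian subquotients $N\langle i\rangle$ linearly, then run a descending induction along the derived series. The only cosmetic differences are that the paper cites \cite[Lemma~4.2.1]{GHKR:DimADL} for the abelian case where you spell out the geometric-series inverse, and the paper phrases the inductive step as a $5$-lemma diagram chase rather than using the explicit splitting identity you invoke from Proposition~\ref{prop-reldim-Lang}.
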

 \begin{proof}
  Let $R$ be a perfect $k$-algebra. We first consider the morphism $L_{1,b_m}\langle i \rangle$. We may identify $N\langle i \rangle$ with a finite dimensional $\L$-vector space $V$ and hence the map $L_{1,b_m}$ with $v \mapsto \Phi(v) - v$ where $\Phi$ is the $\sigma$-semilinear map corresponding to $n \mapsto b_m \sigma(n) b_m\iv$. Since $M=M_b$, we know that the Newton slopes of $(V,\Phi)$ are positive. By explicitly calculating on its simple factors (cf.~e.g.\ Beginning of the proof of \cite[Lemma~4.2.1]{GHKR:DimADL}) we confirm that $L_{1,b_m}$ induces a bijection on $LN\langle i \rangle(R)$. Now we can conclude by proving that $L_{1,b_m}[i]$ is a universal homeorphism by backward induction on $i$. If $N[i] = {0}$ this is obvious. If we know that the statement is true for $i+1$, the induction hypothesis follows by applying the $5$-lemma to
  \begin{center}
   \begin{tikzcd}
    1 \ar{r} & LN[i+1](R) \ar{r} \ar{d}{\cong} & LN[i](R) \ar{r} \ar{d}{L_{1,b_m}[i]} & LN\langle i \rangle(R) \ar{r} \ar{d}{\cong} & 1 \\
    1 \ar{r} &  LN[i+1](R) \ar{r} & LN[i](R) \ar{r} & LN\langle i \rangle \ar{r} & 1.
   \end{tikzcd}
  \end{center}

 \end{proof}

\section{Connected components} \label{sect-cc}

  By \cite[Thm.~0.1]{HeZhou:ADLVcomp}, the connected components of $X_{\leq\mu}(b)$ are precisely the the nonempty subsets of the form $X_{\leq\mu}(b)^\omega$ if $b$ is basic and $\mu$ essentially non-central. We derive the general case from this result by following the strategy in the unramified case as in \cite{ChenKisinViehmann:AffDL},\cite{Nie:ADLVIrredComp}. Surely all combinatorial arguments concerning root data carry over, so it remains to generalise the geometric arguments.

  \subsection{} By Lemma~\ref{lemma-reduction-to-adjoint} it suffices to consider the case that $G$ is adjoint and since the conjectured description of $\pi_0(X_\mu(b))$ is compatible with product we also assume that $G$ is simple. We denote by $d$ the number of irreducible components of the Dynkin diagram of $G$ over $\L$. We fix $\mu \in X_\ast(T)_{\Gamma_0,{\rm dom}}$, $[b] \in B(G,\mu)$ and a standard representative $b \in G(\L)$. We denote by $M$ centraliser of its Newton point. 
  In order to reduce to the basic case, we consider the diagram
  \begin{center}
   \begin{tikzcd}
    & \left[b\right]_P \cap \cl{K\unif^\mu K} \ar[two heads]{ld} \ar[hook]{rd} & \\
    \bigcup_{\mu_M \in I_{\leq\mu,b}} \left[b\right]_M \cap \cl{K\unif^{\mu_M} K} & & \left[b\right] \cap\cl{K \unif^\mu K},
    \end{tikzcd}
  \end{center}
 where $I_ {\leq\mu,b} \subset X_\ast(T)_{\I,M-\dom}$ is chosen such that above diagram is correct and the intersections $[b]_M \cap \cl{K\unif^{\mu_M} K}$ are non-empty for $\mu_M \in I_{\leq\mu,b}$. To determine the set $I_{\leq\mu,b}$ we record the following lemma.
 
 \begin{lem} \label{lem-full-intersection}
   Let $\mu_M \in X_\ast(T)_{\I,M-\dom}$. The set $N(L)\unif^{\mu_M} \cap \cl{K \unif^\mu K}$ is non-empty if and only if $\mu_M \leq \mu$; in this case $\unif^{\mu_M}$ is contained each of its irreducible components. In particular, $N(L)\unif^{\mu_M} \cap \cl{K \unif^\mu K}$ is connected.
 \end{lem}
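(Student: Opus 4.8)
The plan is to realise the set in question as a bounded subscheme of $N(\L)\unif^{\mu_M}$ carrying a contracting $\GG_m$-action whose unique fixed point is $\unif^{\mu_M}$; all three assertions then fall out at once. Write $Z \coloneqq N(\L)\unif^{\mu_M}\cap\cl{K\unif^\mu K}$. First I would record that $N(\L)\unif^{\mu_M}$ is a closed ind-subscheme of $LG$ and that, $\cl{K\unif^\mu K}$ being bounded, $Z$ is a bounded admissible subscheme, so that its irreducible components are well defined. The easy half of the first claim is immediate: if $\mu_M\le\mu$ then $\unif^{\mu_M}\in K\unif^{(\mu_M)_\dom}K\subset\cl{K\unif^\mu K}$ by the standard description of the closures of Schubert cells in $\Flag_G$, and $\unif^{\mu_M}=1\cdot\unif^{\mu_M}\in N(\L)\unif^{\mu_M}$, so $\unif^{\mu_M}\in Z$.

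Next I would set up the contracting action. Assuming $P\ne G$ (otherwise everything is trivial), choose a cocharacter $\gamma\colon\GG_m\to Z(M)^\circ\subset T$, defined over $\L$ --- for instance a large enough multiple of $2\rho_N^\vee$ --- such that $\langle\beta,\gamma\rangle>0$ for every root $\beta$ of $T$ occurring in $\Lie N$. Conjugation $n\mapsto\gamma(t)n\gamma(t)\iv$ then defines an action of $\GG_m$, resp.\ of $\GG_m^\perf$ in the mixed characteristic case, on $N(\L)=LN$; filtering $N$ by $T$-stable subgroups with vector-group subquotients, the induced action on each subquotient is scaling by a positive power of $t$, so by the elementary computation underlying \cite[Prop.~4.2.2]{GHKR:DimADL} it extends to a morphism $\AA^1\times LN\to LN$ sending $\{0\}\times LN$ to the identity section. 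Since $\gamma(t)\in T(\L)$ commutes with $\unif^{\mu_M}\in T(\L)$, transporting along $n\mapsto n\unif^{\mu_M}$ turns this into the conjugation action $z\mapsto\gamma(t)z\gamma(t)\iv$ of $\GG_m$ on $N(\L)\unif^{\mu_M}$, still extending over $\AA^1$, with $\lim_{t\to0}\gamma(t)z\gamma(t)\iv=\unif^{\mu_M}$ for every $z$.

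The decisive point, which I would verify next, is that this action preserves $Z$: as $\gamma$ is a cocharacter of $T$, $\gamma(t)$ lies in the bounded subgroup $T(\L)_1\subset I\subset K$ for every $t$; hence conjugation by $\gamma(t)$ stabilises the bi-$K$-invariant set $\cl{K\unif^\mu K}$, and it obviously stabilises $N(\L)\unif^{\mu_M}$. So $Z$ is a closed, $\GG_m$-stable subscheme of $N(\L)\unif^{\mu_M}$. From here the conclusion is formal. If $Z\ne\emptyset$, pick $z\in Z$; the morphism $\AA^1\to N(\L)\unif^{\mu_M}$, $t\mapsto\gamma(t)z\gamma(t)\iv$, has a closed preimage of $Z$ containing the dense open $\GG_m$, hence maps all of $\AA^1$ into $Z$, so $\unif^{\mu_M}\in Z\subset\cl{K\unif^\mu K}$; by the closure relations this forces $\mu_M\le\mu$, which proves the ``only if'' half of the first assertion. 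Running the identical argument at any point $z$ of a given irreducible component $C$ of $Z$ --- which is $\GG_m$-stable because $\GG_m$ is connected, and closed in $Z$ --- shows $\unif^{\mu_M}\in C$; thus every irreducible component of $Z$ contains $\unif^{\mu_M}$, and in particular $Z$ is connected.

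The step I expect to require the most care is the claim that the conjugation action is genuinely contracting at the level of the loop group, i.e.\ extends to $\AA^1\times LN\to LN$ with limit the identity section, uniformly in the equal- and mixed-characteristic settings and for the possibly non-split (Weil-restricted) root groups of the quasi-split group $G$. This is, however, exactly the type of graded computation on $N$ already used in \cite{GHKR:DimADL} and in the proof of Proposition~\ref{prop-reldim-Lang}, the only input being positivity of $\langle\beta,\gamma\rangle$ on $\Lie N$, so I would simply invoke it there rather than redo it.
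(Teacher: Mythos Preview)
Your argument is correct and is essentially the paper's own proof, spelled out in detail: the paper cites \cite[Proof of Lem.~4.2]{NgoPolo:CasselmanShalika} for the second claim, and that argument is precisely the contracting $\GG_m$-action by conjugation with a dominant central cocharacter of $M$ that you set up. The only minor difference is that for the ``only if'' direction of the non-emptiness criterion the paper appeals to Proposition~\ref{prop-intersection} (ultimately the Haines--Rostami result on $U(\L)\unif^{\mu'}\cap K\unif^\mu K$), whereas you extract it directly from the contraction by observing that $\unif^{\mu_M}\in\cl{K\unif^\mu K}$ forces $(\mu_M)_\dom\le\mu$; your route is slightly more self-contained, the paper's is slightly shorter given what has already been proved.
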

 \begin{proof}
  The first claim follows directly from Proposition~\ref{prop-intersection}. The second claim follows by the usual argument (see for example \cite[Proof of Lem.~4.2]{NgoPolo:CasselmanShalika}).
 \end{proof} 
   
 Hence $\mu_M \in I_{\leq \mu,b}$ if and only if $\mu_M \leq \mu$ and the pair $([b]_M,\mu_M)$ satisfies the generalised Mazur inequality (\ref{eq-Mazur}). We denote by $I_{\leq\mu,b}^{\min}$ the set of minimal elements in $I_{\leq\mu,b}$. 
 
 \begin{prop}[{\cite[\S~2.4, Lem.~6.8]{Nie:ADLVIrredComp}}] \label{prop-nie-6.8}
  Let $\mu$ and $b$ as above.
  \begin{subenv}
   \item The elements in $I_{\leq\mu,b}^{\min}$ are precisely the minuscule coweights in $X_\ast(T)_{M-\dom}$ corresponding to the elements $\kappa \in \pi_1(M)_\I$ satisfying
  \begin{assertionlist}
   \item $\kappa([b]) \equiv \kappa \textnormal{ in } \pi_1(M)_\Gamma$ and
   \item The difference $\mu - \kappa$ in $\pi_1(M)_\Gamma$ is a linear combination of (the images of) $\Sigma^+ \setminus \Sigma_M^+$.
  \end{assertionlist}
   \item The elements in $I_{\leq\mu,b}^{\min}$ are related in the following way. For $\alpha \in \Sigma\setminus \Sigma_M$ and $r \in \NN$ we write $\mu_M \xrightarrow{(\alpha,r)} \mu_M'$ if $\mu_M' - \mu_M = \alpha^\vee - \sigma^i(\alpha^\vee)$ and $\mu_M +\alpha^\vee, \mu_M- \sigma^i(\alpha^\vee) \leq \mu$.
    Then for any $\mu_M,\mu_M' \in I_{\leq\mu,b}^{\min}$ there exists a sequence $\mu_M = x_0 \xrightarrow{(\alpha_0,r_0)} x_1 \xrightarrow{(\alpha_1,r_1)} \dotsb \xrightarrow{(\alpha_{m-1},r_{m-1})} x_r = \mu_M'$ in $I_{\leq\mu,b}^{\min}$. Moreover the sequence can be chosen such that $(\alpha_j,r_j)$ have the following properties.
  \begin{assertionlist}
   \item $\alpha_j^\vee$ is $M$-anti-dominant and $M$-minuscule
   \item The cardinality of the $\sigma$-orbit of $\alpha_j$ is either $d,2d$ or $3d$. If it equals $d$ or $2d$, we can choose $r_j \leq d$ and $r_j \leq 2d$ otherwise.
  \end{assertionlist}
  \end{subenv}
 \end{prop}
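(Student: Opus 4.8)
The plan is to note that Proposition~\ref{prop-nie-6.8} is purely root-theoretic and to transcribe the proof of the analogous statement \cite[\S~2.4, Lem.~6.8]{Nie:ADLVIrredComp} from the unramified case. Everything in sight --- the reduced root system $\Sigma$ and its sub-system $\Sigma_M$, the lattice $X_\ast(T)_I$ with its sub-root-lattice $\ZZ\Sigma_M^\vee$, the quotients $\pi_1(G)_I$ and $\pi_1(M)_I$, and the Frobenius $\sigma$ acting on all of these --- has exactly the same formal structure as the absolute cocharacter lattice, root system and diagram automorphism that appear in Nie's argument, so his combinatorics carries over once $(\Sigma, X_\ast(T)_I, \sigma)$ is substituted throughout. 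I will indicate the ingredients and why each transfers.

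For part~(1), I would first rewrite $I_{\leq\mu,b}$ using the generalised Mazur inequality~\eqref{eq-Mazur} applied to $([b]_M,\mu_M)$ inside $M$: since $M = M_b$, the Newton point $\nu([b])$ is $M$-central, so the inequality $\nu([b]) \leq \restr{\mu_M}{\Shat}$ constrains only the image of $\mu_M$ in $\pi_1(M)_{\Gamma,\QQ}$, while the congruence part reads $\kappa([b]) \equiv \mu_M$ in $\pi_1(M)_\Gamma$. Combining this with Lemma~\ref{lem-full-intersection} identifies $I_{\leq\mu,b}$ with the set of $M$-dominant $\mu_M \leq \mu$ whose class in $\pi_1(M)_\Gamma$ is $\kappa([b])$ and which differs from $\mu$ by a sum of images of coroots in $\Sigma^+\setminus\Sigma_M^+$. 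One then checks, as in Nie, that the $\leq_M$-minimal such $\mu_M$ are precisely the $M$-minuscule ones --- using that an $M$-minuscule coweight is the unique minimal $M$-dominant element of its coset modulo $\ZZ\Sigma_M^\vee$ --- and that $\mu_M \mapsto (\textnormal{class in }\pi_1(M)_I)$ then induces the asserted bijection with the $\kappa$ satisfying (a) and (b). The only step needing minuscule-specific care is that for such $\mu_M$ the $M$-central inequality $\nu([b]) \leq_M \mu_M$ holds automatically once (a) does.

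For part~(2), I would run Nie's induction. Each move $\mu_M \xrightarrow{(\alpha,r)} \mu_M'$ changes $\mu_M$ by $\alpha^\vee - \sigma^i(\alpha^\vee)$, which vanishes in $\pi_1(M)_\Gamma$ and hence preserves (a); one checks such a step can be kept inside $I_{\leq\mu,b}^{\min}$, i.e.\ compatibly with (b), and that the resulting graph on $I_{\leq\mu,b}^{\min}$ is connected. The refinements --- that $\alpha_j^\vee$ may be taken $M$-anti-dominant and $M$-minuscule, and the bounds on the $\sigma$-orbit of $\alpha_j$ and on $r_j$ --- come entirely from the structure of the $\sigma$-action on the Dynkin diagram of $G$ over $\L$: with $G$ simple over $F$ this diagram has $d$ connected components and a residual symmetry of order $1$, $2$, or $3$, giving the trichotomy $d, 2d, 3d$ for the orbit of $\alpha_j$. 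These are precisely the cases treated in Nie's analysis, now for the relative diagram.

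The expected main obstacle is simply that there is no new geometric or arithmetic content at all: the whole task is to verify that no step of Nie's combinatorics secretly uses that $G$ splits over $\L$. The two places to watch are the passage to the minuscule representative in part~(1) --- where $X_\ast(T)_I$ (not $X_\ast(T)$) must be the ambient lattice and $\ZZ\Sigma_M^\vee$ the relevant sub-root-lattice --- and the order-$1/2/3$ trichotomy of diagram symmetries in part~(2), where one must confirm the triality case is stated for the relative diagram of $G$ over $\L$. Both are routine, if a little tedious.
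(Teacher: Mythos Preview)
Your proposal is correct and matches the paper's treatment: the paper does not supply its own proof but simply cites the result from Nie, implicitly relying on exactly the observation you make --- that the statement is purely root-theoretic in the data $(\Sigma, X_\ast(T)_I, \sigma)$ and hence Nie's combinatorial argument transfers verbatim from the unramified to the quasi-split setting. Your outline is a faithful (and more explicit) unpacking of that citation.
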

   
 \subsection{} As an intermediate step, we are going to prove that $[b] \cap \cl{K\unif^\mu K}$ is connected. Recently this was done by Zhou (\cite[Thm.~A.1.4]{vanHoften:ModpPtsShVar}). Since parts of the proof will be needed for the next step, we provide a sketch of the proof.
 
 Note that by \cite[Thm.~0.1]{HeZhou:ADLVcomp}, $[b]_M \cap \cl{K_M \unif^{\mu_M} K_M}$ is connected for all $\mu_M \in I_{\mu,b}$. Moreover, its fibres in $[b]_P \cap \cl{K \unif^\mu K}$ are isomorphic to $N(L)\unif^{\mu_M} \cap \cl{K \unif^\mu K}$ and thus connected by Lemma~\ref{lem-full-intersection}. Altogether, we obtain
 \[
  \pi_0([b]_P \cap \cl{K\unif^\mu K}) = I_{\leq\mu,M}^{\min}.
 \]
 For the next step, we require some geometric tools.

\begin{lem} \label{lem-cont-to-proj-line}
 The morphism $\AA^1 \cong U_{\alpha,-1} / U_{\alpha,0} \mono G(L)/K$ has a unique continuation $\PP^1 \to G(L)/K$, mapping $\infty$ to $\unif^{-\alpha}K$.
\end{lem}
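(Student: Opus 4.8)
I want to show that the inclusion $\AA^1\cong U_{\alpha,-1}/U_{\alpha,0}\hookrightarrow G(L)/K$ extends uniquely to $\PP^1$, with $\infty\mapsto\unif^{-\alpha}K$. The natural approach is to reduce to a rank-one computation inside $\SL_2$ (or $\PGL_2$) via the root subgroup $\alpha$, where the statement becomes the familiar fact that $\PP^1$ is the affine flag variety $\SL_2(L)/\SL_2(\Ocal_L)$ in appropriate coordinates. Concretely: the subgroup generated by $U_{\alpha'}$ and $U_{-\alpha'}$ (for $\alpha'$ the reduced root proportional to $\alpha$) together with $T$ gives a morphism from (a form of) $\SL_2$ or $\PGL_2$ into $G$; the orbit map sends the affine line $U_{\alpha,-1}/U_{\alpha,0}$ into $G(L)/K$, and one identifies this with the standard chart of a $\PP^1$ sitting in $\Flag_G$. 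Uniqueness of the continuation is automatic since $\PP^1$ is proper and reduced and $G(L)/K$ is separated (an ind-scheme of ind-(perfectly) finite type), so two extensions agreeing on the dense open $\AA^1$ coincide.

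\textbf{Key steps, in order.} First I would fix the reduced root $\alpha'\in\Sigma$ proportional to $\alpha$ and recall from Bruhat--Tits theory (as set up in \S\ref{sect-group-theory}, following \cite{PappasRapoport:AffineFlag,Landvogt:Compactification}) the structure of the rank-one group $G_{\alpha'}$ generated by $U_{\alpha',\bullet}$, $U_{-\alpha',\bullet}$ and the relevant part of $T$; its Bruhat--Tits building is a tree (an apartment being a line) and the corresponding partial affine flag variety is a $\PP^1$. Second, I would write down the explicit cocycle/commutation identity in $G_{\alpha'}(L)$: for $u\in U_{\alpha,-1}\setminus U_{\alpha,0}$ one has an Iwasawa-type decomposition $u = \unif^{-\alpha}\cdot (\text{element of }K)\cdot u'$ with $u'\in U_{-\alpha,1}$ vanishing as $u\to\infty$ — this is the standard $\SL_2$ identity $\begin{pmatrix}1&x^{-1}\\0&1\end{pmatrix} = \begin{pmatrix}x^{-1}&0\\0&x\end{pmatrix}\begin{pmatrix}0&-1\\1&0\end{pmatrix}\begin{pmatrix}1&0\\x&1\end{pmatrix}$ transported through the pinning of $\alpha'$. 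This shows that in $G(L)/K$ the point $uK$ approaches $\unif^{-\alpha}K$ as the $\AA^1$-coordinate tends to $\infty$, which pins down the value at $\infty$ and gives the morphism on the second chart $U_{-\alpha,1}/U_{-\alpha,2}\cong\AA^1\to G(L)/K$, $u'\mapsto \unif^{-\alpha}u'K$. Third, I would glue the two affine charts along their common $\GG_m$ using exactly this change-of-coordinates $x\mapsto x^{-1}$, obtaining a morphism $\PP^1\to G(L)/K$ restricting to the given one on the first chart. Finally, uniqueness: any continuation agrees with this one on the dense open $\AA^1\subset\PP^1$, hence everywhere by separatedness of $\Flag_G$ and reducedness of $\PP^1$.

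\textbf{Main obstacle.} The only subtlety is the ramified/possibly-nonsplit case: $\alpha$ may be a divisible or multipliable root ($2\alpha\in\Phi$ or $\alpha/2\in\Phi$), so the rank-one subgroup is not literally $\SL_2$ but a rank-one quasi-split group (e.g. $\SU_3$), and the relation $U_{\alpha',m'} $ versus the $U'_{\alpha',m'}$ of \S\ref{sect-group-theory} must be used carefully; the clean $2\times 2$ matrix identity must be replaced by the corresponding identity in $\SU_3$ (or in whichever rank-one quasi-split group occurs), using the explicit pinning and the Chevalley-type commutator relations recorded in \cite[\S 9]{PappasRapoport:AffineFlag}. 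I expect this to be the part requiring genuine care, but it is a finite case-check and the structure of the argument — Iwasawa decomposition in a rank-one group, reading off the limit point, gluing two $\AA^1$'s into a $\PP^1$ — is identical in all cases. One should also note that $\unif^{-\alpha}$ here denotes $\unif^{-\alpha^\vee}$ read in $X_\ast(T)_I$ via the identification of \S\ref{sect-group-theory}, i.e. the translation part of the affine reflection in the wall $\alpha=0$ landing in the neighbouring alcove direction; matching the normalisation of the filtration steps $U_{\alpha,-1}/U_{\alpha,0}$ with this translation is what forces the value $\infty\mapsto\unif^{-\alpha}K$ rather than some other vertex.
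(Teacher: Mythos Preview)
Your proposal is correct and is essentially the same approach as the paper's proof: the paper carries out exactly the rank-one case-check you anticipate, splitting into the case where $\alpha$ corresponds to a single absolute root $\beta$ (using the identity \cite[4.11]{Landvogt:Compactification}, which is your $\SL_2$ identity transported through the pinning) and the multipliable case $\beta,2\beta\in\Phi$ (using \cite[4.21]{Landvogt:Compactification}, the $\SU_3$-type analogue), in each case reading off $u_\beta(\ldots)K = u_{-\beta}(\ldots)\unif^{-\alpha^\vee}K \to \unif^{-\alpha^\vee}K$ as the coordinate goes to $\infty$. Your identification of the multipliable-root case as the only genuine subtlety, and of $\unif^{-\alpha}$ as meaning $\unif^{-\alpha^\vee}$, matches the paper precisely.
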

\begin{proof}
 First assume that $\alpha$ correponds to a unique root $\beta\in\Phi$. Recall that we have an isomorphism $u_\beta\colon \Res_{\tilde{L}/\L} \GG_a \isom U_\beta$ for a certain field extension $\Ltilde/L$ (see for example \cite[Not.~4.8]{Landvogt:Compactification}), which induces
 \[
  \AA^1 \isom U_{\alpha,-1}'/U_{\alpha,0}', z \mapsto u_\alpha(z\cdot \unif_{\Ltilde}\iv)\cdot U_{\alpha,0}'.
 \]
 By \cite[4.11]{Landvogt:Compactification} we have
 \[
  u_\beta(z\cdot \unif_{\Ltilde}\iv)K = u_{-\beta}(z\iv\cdot \unif_{\Ltilde}) \cdot \unif^{-\beta^\vee} K \to_{z \to \infty} \unif^{-\beta^\vee}K.
 \]
 In the case that $\alpha$ corresponds to two (proportional) roots $\beta,2\beta \in \Phi$. For a quadratic field extension $\Ltilde'/\Ltilde$ we denote
 \begin{align*}
  H(\Ltilde',\Ltilde) &\coloneqq \{ (c,d) \in \Res_{\Ltilde'/\Ltilde} \AA^2 \mid \mathrm{N}_{\Ltilde'/\Ltilde}(c) = \trace_{\Ltilde'/\Ltilde}(d) \} \\
  H_0(\Ltilde',\Ltilde) &\coloneqq \{ (c,d) \in H(\Ltilde',\Ltilde) \mid c=0 \}.
 \end{align*}
 Following \cite[Not.~4.14]{Landvogt:Compactification}, there exists $\Ltilde'/\Ltilde$ and a commutative diagram
 \begin{center}
  \begin{tikzcd}
   \Res_{\Ltilde/\L} H_0(\Ltilde',\Ltilde) \ar[hook]{d} \ar{r}{\sim}[swap]{u_{2\beta}} & U_{2\beta} \ar[hook]{d}  \\
   \Res_{\Ltilde/\L} H(\Ltilde',\Ltilde) \ar{r}{\sim}[swap]{u_\beta} & U_\beta
  \end{tikzcd}
 \end{center}
 inducing the isomorphism
 \[
  \AA^1 \isom U_{\alpha,-1}'/U_{\alpha,0}', z \mapsto u_\beta(0,z\cdot \unif_{\Ltilde}\iv)\cdot U_{\alpha,0}'.
 \]
 Similar as in the first case, we deduce from \cite[4.21]{Landvogt:Compactification}
 \[
  u_\beta(0,z\cdot \unif_{\Ltilde}\iv)K = u_{-\beta}(0,z\iv\cdot \unif_{\Ltilde}) \cdot \unif^{-\alpha^\vee} K \to_{z \to \infty} \unif^{-\alpha^\vee}K.
 \]
\end{proof}

\begin{lem} \label{lem-Nie-6.13}
 Let $\alpha \in \Sigma$ and $\lambda \in X_\ast(T)_\I$. Then
 \begin{align*}
   (U_{\alpha,-1}' \setminus U_{\alpha,0}') \cdot \unif^\lambda &\subset
   \begin{cases} 
    K \unif^\lambda K & \textnormal{ if } \langle \alpha, \lambda \rangle < 0 \\   
    K \unif^{\lambda + \alpha^\vee} K & \textnormal{ if } \langle \alpha, \lambda \rangle \geq 0
  \end{cases}
  \shortintertext{ and }
    \unif^\lambda \cdot (U_{\alpha,-1}' \setminus U_{\alpha,0}') &\subset
   \begin{cases} 
    K \unif^{\lambda-\alpha^\vee} K & \textnormal{ if } \langle \alpha, \lambda \rangle \leq 0 \\   
    K \unif^{\lambda} K & \textnormal{ if } \langle \alpha, \lambda \rangle > 0.
  \end{cases}
 \end{align*}
\end{lem}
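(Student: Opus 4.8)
The plan is to reduce the statement to a rank-one computation and to feed in the factorisation of elements of $U'_{\alpha,-1}\setminus U'_{\alpha,0}$ that underlies Lemma~\ref{lem-cont-to-proj-line}.

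First I would note that the two displayed inclusions are equivalent: taking inverses sends $(U'_{\alpha,-1}\setminus U'_{\alpha,0})\cdot\unif^\lambda$ to $\unif^{-\lambda}\cdot(U'_{\alpha,-1}\setminus U'_{\alpha,0})$ (since $U'_{\alpha,-1}$ is a group and $U'_{\alpha,0}$ a subgroup), sends $K\unif^\nu K$ to $K\unif^{-\nu}K$, and turns the hypothesis $\langle\alpha,\lambda\rangle<0$ into $\langle\alpha,-\lambda\rangle>0$; so it suffices to prove the first inclusion. The case $\langle\alpha,\lambda\rangle<0$ is then immediate from the conjugation rule $\unif^\lambda U'_{\alpha,m}\unif^{-\lambda}=U'_{\alpha,m+\langle\alpha,\lambda\rangle}$: for any $u\in U'_{\alpha,-1}$ we have $\unif^{-\lambda}u\unif^\lambda\in U'_{\alpha,-1-\langle\alpha,\lambda\rangle}\subseteq U'_{\alpha,0}\subseteq K$, hence $u\unif^\lambda\in\unif^\lambda K\subseteq K\unif^\lambda K$.

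The substantive case is $\langle\alpha,\lambda\rangle\geq 0$. Here I would work inside the rank-one subgroup of $G$ generated by $T$ and the root groups $U_{\pm\alpha},U_{\pm 2\alpha}$, and use its Bruhat decomposition along the big cell — i.e.\ the identity of Landvogt \cite[4.11, 4.21]{Landvogt:Compactification} recalled in the proof of Lemma~\ref{lem-cont-to-proj-line}: every $u\in U'_{\alpha,-1}\setminus U'_{\alpha,0}$ can be written
\[
 u=u^-\cdot t\cdot\dot{s}_\alpha\cdot u_0^-,\qquad u^-,u_0^-\in U'_{-\alpha,1},\quad \dot{s}_\alpha\in N(\L)\cap K,\quad t\in\unif^{-\alpha^\vee}T(\L)_1,
\]
where $\dot{s}_\alpha$ lifts the reflection $s_\alpha$ (such a lift lies in $K$ because the stabiliser of $0$ maps onto $W$), and — crucially — the translation part of the middle term is exactly $-\alpha^\vee\in X_\ast(T)_I$, which is precisely the content of Lemma~\ref{lem-cont-to-proj-line} ($\infty\mapsto\unif^{-\alpha^\vee}K$). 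Now I would push $\unif^\lambda$ leftwards through this expression: because $\langle\alpha,\lambda\rangle\geq 0$ one has $\unif^{-\lambda}u_0^-\unif^\lambda\in U'_{-\alpha,1+\langle\alpha,\lambda\rangle}\subseteq K$, so $u_0^-\unif^\lambda=\unif^\lambda k_1$ with $k_1\in K$; then $\dot{s}_\alpha\unif^\lambda=\unif^{s_\alpha\lambda}\dot{s}_\alpha$, and $t$ commutes with $\unif^{s_\alpha\lambda}$. Writing $t=\unif^{-\alpha^\vee}t_0$ with $t_0\in T(\L)_1\subseteq K$ and collecting the factors that lie in $K$ (namely $u^-$, $t_0$, $\dot{s}_\alpha$, $k_1$) yields
\[
 u\unif^\lambda= u^-\,\unif^{s_\alpha\lambda-\alpha^\vee}\,\bigl(t_0\dot{s}_\alpha k_1\bigr)\ \in\ K\,\unif^{s_\alpha\lambda-\alpha^\vee}\,K .
\]
Since $s_\alpha\lambda-\alpha^\vee=s_\alpha(\lambda+\alpha^\vee)$ and $K\unif^\nu K=K\unif^{\nu_{\dom}}K$ depends only on the $W$-orbit of $\nu$, this equals $K\unif^{\lambda+\alpha^\vee}K$, as required.

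I expect the only real obstacle to be the bookkeeping: making sure that the translation part of the rank-one Bruhat factorisation is genuinely $-\alpha^\vee$ (with $\alpha^\vee$ the coroot of the reduced relative root system $\Sigma$, not a rescaling), including the case $2\alpha\in\Phi$ where the plain $\SL_2$-identity is replaced by its analogue for the groups $H(\Ltilde',\Ltilde)$ of \cite[4.21]{Landvogt:Compactification}. That identification, however, has already been carried out in the proof of Lemma~\ref{lem-cont-to-proj-line}, so the argument above applies uniformly in both cases; everything else — the conjugation rule for the $U'_{\alpha,m}$, the containments $U'_{\pm\alpha,m}\subseteq K$ for $m\geq 0$, and $W$-invariance of $K\unif^\bullet K$ — is routine.
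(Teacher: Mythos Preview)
Your proof is correct and takes essentially the same approach as the paper's: the paper simply says the argument is identical to Nie's unramified proof \cite[Lem.~6.13]{Nie:ADLVcomp} once one substitutes the rank-one root group identities of \cite[4.11, 4.21]{Landvogt:Compactification}, and that is exactly what you have unpacked --- the reduction by inversion, the trivial case via the conjugation rule, and the substantive case via the Bruhat-type factorisation $u=u^-\,t\,\dot s_\alpha\,u_0^-$ with translation part $-\alpha^\vee$. Your write-up is in fact more explicit than the paper's, and the caveat you flag about the $2\alpha\in\Phi$ case (handled by \cite[4.21]{Landvogt:Compactification}) is precisely the point the paper is making when it cites those two sections of Landvogt.
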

\begin{proof}
 The proof is identical to the unramified case (\cite[Lem.~6.13]{Nie:ADLVcomp}), applying \cite[4.11,4.21]{Landvogt:Compactification} to the root group calculations.
\end{proof}

\begin{lem} \label{lem-Nie-6.14}
 For any $\lambda \in X_\ast(T)_\I$ and $\alpha,\beta \in \Sigma$ such that the root system generated by $\alpha,\beta$ is of type $A$ and $\lambda,\lambda+\alpha^\vee,\lambda-\beta^\vee, \lambda + \alpha^\vee - \beta^\vee \succeq \mu$ we have
 \[
  U_{\alpha,-1}' \unif^\lambda U_{\beta,-1} \subset \cl{K \unif^\mu K}.
 \]
\end{lem}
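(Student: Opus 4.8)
The plan is to reduce to a rank-one computation along the $A_2$ (or $A_1 \times A_1$) subsystem generated by $\alpha$ and $\beta$, exactly as in the unramified argument of Nie \cite[Lem.~6.14]{Nie:ADLVcomp}, replacing the split root-group coordinates by the Landvogt coordinates $u_\beta\colon \Res_{\Ltilde/\L}\GG_a \isom U_\beta$ (resp.\ the norm-one subgroups in the non-reduced case) already used in Lemma~\ref{lem-cont-to-proj-line} and Lemma~\ref{lem-Nie-6.13}. First I would observe that the statement is stable under left and right multiplication by $K$, so it suffices to show that the image of $U_{\alpha,-1}'\unif^\lambda U_{\beta,-1}'$ in $\Flag_G$ lies in $\cl{K\unif^\mu K}$; since $\cl{K\unif^\mu K} = \bigcup_{\mu'\leq\mu} K\unif^{\mu'}K$ is closed and the source is irreducible (it is an affine space, being a product of copies of $\AA^1$ after quotienting by $U_{\alpha,0}'$, $U_{\beta,0}'$), it is enough to check that a dense subset of the image lands in $\cl{K\unif^\mu K}$, and then take closures.

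The key step is the following: for $u\in U_{\alpha,-1}'\setminus U_{\alpha,0}'$ and $v\in U_{\beta,-1}'\setminus U_{\beta,0}'$, one writes $u\unif^\lambda v = u\unif^\lambda v \unif^{-\lambda}\cdot \unif^\lambda$ and uses that $\unif^\lambda v\unif^{-\lambda}\in U_{\beta, -1-\langle\beta,\lambda\rangle}'$, so that by the hypothesis $\lambda-\beta^\vee\succeq\mu$ together with Lemma~\ref{lem-Nie-6.13} the element $\unif^\lambda v$ lies in $K\unif^{\lambda}K \cup K\unif^{\lambda-\beta^\vee}K$, and similarly for the left factor. The genuinely new input compared to a single application of Lemma~\ref{lem-Nie-6.13} is handling the \emph{interaction} of the two root groups: in the $A_2$ case one uses the commutator relation $[U_\alpha, U_\beta]\subseteq U_{\alpha+\beta}$ (which in the ramified setting again follows from Bruhat--Tits theory, see \cite[\S~9]{PappasRapoport:AffineFlag} or \cite[\S~4]{Landvogt:Compactification}, with the Landvogt coordinates making the structure constants explicit), and passes to the reduced root system $\Sigma$ where $\alpha,\beta,\alpha+\beta$ behave exactly as in the split $A_2$. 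One then runs Nie's case distinction on the signs of $\langle\alpha,\lambda\rangle$ and $\langle\beta,\lambda\rangle$: in each case one moves one root group past $\unif^\lambda$ using Lemma~\ref{lem-Nie-6.13}, absorbs the resulting $K$ on the appropriate side, and is left with an expression of the form $U_{\gamma,-1}'\unif^{\lambda'}$ with $\lambda'\in\{\lambda,\lambda+\alpha^\vee,\lambda-\beta^\vee,\lambda+\alpha^\vee-\beta^\vee\}$ and $\gamma\in\{\alpha,\beta,\alpha+\beta\}$, all of which are $\preceq\mu$ by hypothesis, so a final application of Lemma~\ref{lem-Nie-6.13} puts it into $\cl{K\unif^\mu K}$.

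The main obstacle I anticipate is purely bookkeeping rather than conceptual: in the non-reduced case ($2\beta\in\Phi$ or $2\alpha\in\Phi$) the group $U_{\alpha}'$ is the $\Res_{\Ltilde/\L}$ of a norm-one subgroup $H(\Ltilde',\Ltilde)$ rather than a vector group, so the commutator computation and the ``$u\mapsto u^{-1}\unif^{-\gamma^\vee}$'' move of Lemma~\ref{lem-cont-to-proj-line} must be carried out using \cite[4.14,4.21]{Landvogt:Compactification} instead of the naive $\SL_2$-calculation; but since the hypothesis restricts $\alpha,\beta$ to generate a system of type $A$, and in type $A$ the relevant rank-one and rank-two subgroups are already treated explicitly in Landvogt, this only lengthens the verification without introducing a new idea. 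The combinatorial input — that the four translates $\lambda,\lambda\pm\cdots$ being $\preceq\mu$ forces every intermediate translate appearing to be $\preceq\mu$ as well — is identical to \cite{Nie:ADLVcomp} and carries over verbatim since it concerns only the partial order on $X_\ast(T)_\I$ and the $\Sigma$-root datum, both of which are available in our setting.
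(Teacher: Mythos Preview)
Your proposal is correct and takes essentially the same approach as the paper: the paper's proof is the single sentence ``This can be concluded from the previous lemma by the same proof as in the unramified case (\cite[Lem.~6.14]{Nie:ADLVcomp})'', and your sketch is precisely an elaboration of that deferral, running Nie's case analysis on the signs of $\langle\alpha,\lambda\rangle$, $\langle\beta,\lambda\rangle$ with Lemma~\ref{lem-Nie-6.13} supplying the ramified replacement for the split root-group computations. Your remarks on the Landvogt coordinates and the non-reduced case are more detail than the paper gives, but they are consistent with how Lemmas~\ref{lem-cont-to-proj-line} and \ref{lem-Nie-6.13} were established.
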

\begin{proof}
 This is can be concluded from the previous lemma by the same proof as in the unramified case (\cite[Lem.~6.14]{Nie:ADLVcomp}).
\end{proof}

\begin{thm}[{\cite[Thm.~A.1.4,Lem.~A.3.11]{vanHoften:ModpPtsShVar}}] \label{thm-nie-6.8} 
 $[b] \cap  \cl{K \unif^{\mu} K}$ is connected, i.e.\ $J_b(F)$ acts transitively on $\pi_0(X_\mu(b))$. More precisely, if $\mu_M,\mu_M' \in I_{\leq\mu,b}^{\min}$ with $\mu_M \xrightarrow{(\alpha,r)} \mu_M'$ and $\omega_M \in \pi_1(M)$, then $X_{\mu_M}^M(b)^{\omega_M}$ lies in the same connected component of $X_\mu(b)$ as $X_{\mu_M'}(b)^{\omega_M + \sum_{i=0}^{r-1} \sigma^i(\alpha)}$.
\end{thm}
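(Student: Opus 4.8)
The strategy is Nie's treatment of the unramified case \cite[\S~6]{Nie:ADLVcomp}, whose geometric input in the ramified setting is precisely what Lemmas~\ref{lem-cont-to-proj-line}, \ref{lem-Nie-6.13} and~\ref{lem-Nie-6.14} provide; so I only sketch the structure.

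\emph{Reduction.} By the discussion preceding the statement, projection to $\Flag_M$ exhibits $[b]_P\cap\cl{K\unif^\mu K}$ as fibred over $\bigsqcup_{\mu_M\in I_{\leq\mu,b}}[b]_M\cap\cl{K_M\unif^{\mu_M}K_M}$ with connected fibres $N(L)\unif^{\mu_M}\cap\cl{K\unif^\mu K}$ (Lemma~\ref{lem-full-intersection}), each base stratum being connected by \cite[Thm.~0.1]{HeZhou:ADLVcomp} applied to $b$, which is basic in $M=M_b$ (if $\mu_M$ is essentially central in $M$ this stratum is a point). Hence $\pi_0([b]_P\cap\cl{K\unif^\mu K})=I_{\leq\mu,b}^{\min}$; as $[b]\cap\cl{K\unif^\mu K}$ is covered by the $I$-translates of this locally closed piece, and as by Proposition~\ref{prop-nie-6.8}(ii) any two elements of $I_{\leq\mu,b}^{\min}$ are joined by a chain of elementary moves $\xrightarrow{(\alpha,r)}$, the theorem reduces to the final quantitative assertion. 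Telescoping $\mu_M'-\mu_M=\sum_{i=0}^{r-1}\bigl(\sigma^i(\alpha^\vee)-\sigma^{i+1}(\alpha^\vee)\bigr)$ one further reduces, as in loc.\ cit., to joining two strata related by a single step $\nu\to\nu'=\nu+\beta^\vee-\sigma(\beta^\vee)$ with $\beta=\sigma^i(\alpha)$, the associated invariant shift being $\beta^\vee$; here the intermediate cocharacters need not themselves lie in $I_{\leq\mu,b}^{\min}$, which is harmless since only the inclusion of the connecting curve in $\cl{K\unif^\mu K}$ is used.

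\emph{The connecting projective line.} Pick $g\in M(L)$ with $g^{-1}b\sigma(g)\in K_M\unif^{\nu}K_M$ and with the required value of $\kappa_M(g)$ (Lemma~\ref{lemma-reduction-to-identity-component} for $M$), and set
\[
 \varphi\colon \AA^1\;\cong\;U'_{\beta,-1}/U'_{\beta,0}\longrightarrow\Flag_G,\qquad z\longmapsto g\cdot u_\beta(z)\cdot K,
\]
with $z\mapsto u_\beta(z)$ the parametrization of Lemma~\ref{lem-cont-to-proj-line}. The $\ell_b$-invariant of $\varphi(z)$, namely $u_\beta(z)^{-1}\bigl(g^{-1}b\sigma(g)\bigr)\sigma(u_\beta(z))$, is the $\sigma$-conjugate of an element of $K_M\unif^{\nu}K_M$ by the root group $U_\beta$; pushing the left factor past $\unif^{\nu}$ and the Frobenius-twisted factor $\sigma(u_\beta(z))\in U_{\sigma\beta}$ to the far side, Lemma~\ref{lem-Nie-6.13} shows this invariant lies in $K\unif^{\nu''}K$ for $\nu''\in\{\nu,\nu'\}$, and Lemma~\ref{lem-Nie-6.14} (its hypotheses guaranteed by the choice of the move in Proposition~\ref{prop-nie-6.8}(ii)) that it never leaves $\cl{K\unif^\mu K}$. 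Thus $\varphi$ factors through $X_{\leq\mu}(b)$, extends by Lemma~\ref{lem-cont-to-proj-line} to $\overline\varphi\colon\PP^1\to\Flag_G$, and the root-group formulas of \cite{Landvogt:Compactification} at $z=\infty$ identify $\overline\varphi(\infty)$ with a point of the $\nu'$-stratum whose Kottwitz invariant exceeds that of $\varphi(0)=gK$ by $\beta^\vee$. Connectedness of $\PP^1$ then links the two strata; concatenating these curves yields that $X^M_{\mu_M}(b)^{\omega_M}$ and $X^M_{\mu_M'}(b)^{\omega_M+\sum_{i=0}^{r-1}\sigma^i(\alpha)}$ lie in one component of $X_{\leq\mu}(b)$, and since the generic point of each curve lies in the open stratum $X_{\nu''}(b)$, the conclusion descends to $X_\mu(b)$ via \S~\ref{sect-XP}; in particular $[b]\cap\cl{K\unif^\mu K}$ is connected.

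\emph{Main obstacle.} The crux is the endpoint computation---that $\overline\varphi(\infty)$ lands in \textbf{exactly} the $\nu'$-stratum with Kottwitz invariant shifted by \textbf{exactly} $\beta^\vee$. This rests on a careful bookkeeping of the ramified root-group parametrizations, distinguishing $2\alpha\notin\Phi$ from $2\alpha\in\Phi$ (cf.\ \cite[4.11, 4.21]{Landvogt:Compactification}), and on the interplay between conjugation by $\unif^{\nu}$ and the Frobenius $\sigma$, which is what dictates the $\sigma$-orbit-size bounds of Proposition~\ref{prop-nie-6.8}(ii) and produces the exact shift $\sum_{i=0}^{r-1}\sigma^i(\alpha)$. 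In the split case this is Nie's \cite[Lem.~6.13, 6.14]{Nie:ADLVcomp}; here the needed statements are already isolated as Lemmas~\ref{lem-cont-to-proj-line}--\ref{lem-Nie-6.14}, so the remaining work is combinatorial. The passage from $X_{\leq\mu}(b)$ to $X_\mu(b)$ is a minor point handled exactly as in the unramified case.
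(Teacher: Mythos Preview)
Your reduction step and the overall architecture match the paper's, but the heart of your argument---the ``connecting projective line''---departs from the paper in a way that introduces a genuine gap.

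The paper does \emph{not} telescope the move $\mu_M\xrightarrow{(\alpha,r)}\mu_M'$ into $r$ single steps. Instead it uses one curve
\[
 \phi\colon \AA^1\cong U'_{\alpha,-1}/U'_{\alpha,0}\longrightarrow \Flag_G,\qquad u\longmapsto g\cdot u\,\sigma(u)\,\sigma^2(u)\cdots\sigma^{r-1}(u)\cdot K,
\]
a product of $r$ Frobenius translates of the \emph{same} root-group element. This factors through $X_{\leq\mu}(b)$ by the explicit calculation of \cite[Lem.~A.3.11]{vanHoften:ModpPtsShVar}, and Lemma~\ref{lem-cont-to-proj-line} applied to the product gives $\phi(\infty)=g\cdot\unif^{-(\alpha^\vee+\sigma(\alpha^\vee)+\cdots+\sigma^{r-1}(\alpha^\vee))}K$, landing directly in $X_{\mu_M'}^M(b)^{\omega_M+\sum\sigma^i(\alpha^\vee)}$. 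The point is that the bounds on $r$ in Proposition~\ref{prop-nie-6.8}(ii) guarantee the $\sigma^i(\alpha)$ lie in pairwise distinct components of the Dynkin diagram, so the factors commute and the $\ell_b$-computation collapses.

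Your single-step curve $\varphi(z)=g\,u_\beta(z)K$ with $\beta=\sigma^i(\alpha)$ has two problems. First, its $\ell_b$-invariant is $u_\beta(z)^{-1}\,b_m\,\sigma(u_\beta(z))$ with $b_m\in K_M\unif^{\nu}K_M$; Lemmas~\ref{lem-Nie-6.13} and~\ref{lem-Nie-6.14} treat expressions $U'_{\alpha,-1}\unif^\lambda$ and $U'_{\alpha,-1}\unif^\lambda U'_{\beta,-1}$, not ones with $K_M$-factors sandwiched in between. Conjugating the root-group element past $K_M$ spreads it across several root groups of $N$, and the lemmas no longer apply directly. Second, and more seriously, the endpoint $\overline\varphi(\infty)=g\unif^{-\beta^\vee}K$ has $\ell_b$-invariant $\unif^{\beta^\vee}b_m\unif^{-\sigma(\beta^\vee)}$; since $\unif^{\beta^\vee}$ does \emph{not} normalise $K_M$ (as $\beta\notin\Sigma_M$), this need not lie in $K_M\unif^{\nu+\beta^\vee-\sigma(\beta^\vee)}K_M$. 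So the endpoint is not in the stratum you claim, and you have no controlled starting point for the next curve. The concatenation breaks down. This is exactly why the paper (and Nie, and van Hoften) use the Frobenius-product curve rather than telescoping: the product is engineered so that the middle terms cancel and the endpoint lands where it should in one shot.
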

\begin{proof}[Sketch of proof]
 As a consequence of the Iwasawa decomposition, we have a surjective morphism
 \[
  K \times ([b]_P \cap \cl{K \unif^\mu K}) \to [b] \cap \cl{K \unif^\mu K}, (g,b') \mapsto g\iv b' \sigma(g).
 \]
 Since $K$ is connected it thus suffices to prove that the image of $[b]_P \cap \cl{K \unif^\mu K}$ lies in the same connected component of $[b] \cap \cl{K \unif^\mu K}$. By Propostion~\ref{prop-nie-6.8}, it suffices to connect the components of $[b]_P \cap \cl{K \unif^\mu K}$ corresponding to a pair $\mu_M \xrightarrow{(\alpha,r)} \mu_M'$ with $(\alpha,r)$ satisifying properties (a) and (b) of the proposition.
 Let $g \in X_{\mu_M}(b)^{\omega_M}$. By an explicit calculation (see \cite[Proof of Lem.~A.3.11]{vanHoften:ModpPtsShVar}) the morphism
 \[
  \phi\colon \AA^1 \cong U_{\alpha,-1}/U_{\alpha,0} \to \Flag_G, u \mapsto u\sigma(u) \dotsm \sigma^{r-1}(u)
 \]
 factors through $X_{\leq \mu}(b)$. Its continuation to $\PP^1$ satisfies $\phi(\infty) = \unif^{\alpha + \dotsb + \sigma^{r-1}(\alpha)}$ by Lemma~\ref{lem-cont-to-proj-line}, finishing the proof.

\end{proof}


 \subsection{} We assume from now on that $(b,\mu)$ is HN-irreducible. We fix a connected component $X_0 \subset X_\mu(b)$ and denote by $H_b \subset J_b(F)$ the stabiliser of $X_0$. To prove Theorem~\ref{thm-main}~(3), it suffices to show that $H_b = J_b(F)^0$. As we already know by \cite{ZhouZhu:IrredCompADLV} that $J_b(F)^{0_M} \subset H_b$ it even suffices to check that a set of representatives of $J_b(F)^0/J_b(F)^{0_M}$ is contained in $H_b$. Now
 \[
  J_b(F)^0/J_b(F)^{0,M} \cong \ker(\pi_1(M)_\I \epi \pi_1(G)_\I)^\sigma \cong (\ZZ\cdot \Sigma / \ZZ\cdot \Sigma_M)^\sigma.
 \]
 In particular, as the quotient is Abelian, it follows that $H_b$ is a normal such group of $J_b(F)$ is hence does not depend on the choice of $X_0$. It remains to show that $H_b$ contains a set of generators of $\ZZ\cdot \Sigma / \ZZ\cdot \Sigma_M)^\sigma$. We fix $\mu_M$ such that $X^0$ contains a connected component of $X_{\mu_M}^M(b)$.
 
 For this let
 \begin{align*}
  C &\coloneqq \{ \alpha \in \Sigma^+ \mid \mu_M + \alpha \leq \lambda, \alpha\ M\textnormal{-minuscule and }M\textnormal{-antidom.} \}
  \shortintertext{unless $\Sigma$ is of type $G_2$ and $\Sigma_M \subset \Sigma_G$ are the short roots, in which case let}
  C &\coloneqq \{ \alpha \in \Sigma^+_{long} \mid \mu_M + \alpha \leq \lambda, M\textnormal{-antidom.}\}
 \end{align*}
  By \cite[Lemma~7.1,7.2]{Nie:ADLVcomp} the set $\underline{C} \coloneqq \{\underline\alpha \mid \alpha \in C\}$ generates $(\ZZ\cdot \Sigma / \ZZ\cdot \Sigma_M)^\sigma$ where $\underline\alpha$ denotes the sum over all elements of the $\sigma$-orbit of $\alpha$.
 
 \begin{proof}[Proof of Theorem~\ref{thm-main}~(3)]
  It remains to show that for every $\alpha \in C$ we have $\unif^{\underline\alpha} \in H_b$. We denote by $d$ the number of connected components of the Dynkin diagram of $\Sigma$ and let $n \coloneqq \# \sigma^\ZZ.\alpha$. Then $n \in \{d,2d,3d\}$ By \cite[Lemma~8.1]{Nie:ADLVcomp} either
  \begin{assertionlist}
   \item There exist $\theta \in \sigma^\ZZ.\alpha, 1\leq j \leq n-1$ and $\mu_M' \in I_{\leq\mu,b}^{\min}$ such that $\mu_M \xrightarrow{(\theta,j)} \mu_M'$ or
   \item For all $1\leq j \leq n-1$ with $d \not| j$ we have $w_b(\sigma^k(\alpha) = \sigma^k(\alpha)$ and $\langle \sigma^k(\alpha),\mu_M \rangle = 0$.
  \end{assertionlist}  
  
  \smallskip
  
  \emph{Case $n=d$:} If (a) holds then we have $\mu_M \xrightarrow{(\theta,j)} \mu_M' \xrightarrow{(\sigma^k(\theta),d-j)} \mu_M $, thus $\unif^{\underline\alpha} \in H_b$ by Theorem~\ref{thm-nie-6.8}. If (b) holds, we have
   \[
    u_\alpha \Ad_{b\sigma}(u_\alpha) \cdots \Ad_{b\sigma}^{n-1}(u_\alpha) K \in X_\mu(b)
   \]
   for $u_\alpha \in U_{\alpha,-1}$ by explicit calculations on root groups (cf.~\cite[8.2,Case 2]{Nie:ADLVcomp}). Since $X_\mu(b)$ is closed in $G(L)/K$, we deduce by Lemma~\ref{lem-cont-to-proj-line} that $\underline\alpha = \alpha + \dotsb + (w_b\sigma)^{n-1}(\alpha) \in H_b$.
   
  \smallskip   
 
  \emph{Case $n=2d$:} If (a) holds, then $\unif^{\underline\alpha} \in H_b$ by the same argument as above. Otherwise, $\alpha' \coloneqq (w_b\sigma)^d(\alpha) + \alpha \in \Sigma$ (cf.~\cite[Lemma ~8.3]{Nie:ADLVcomp}) and we define a morphism $\phi\colon\AA^{1,\perf} \cong U_{\alpha,-1}/U_{\alpha,0} \to X_\mu(b)$ as follows. If $w_b\sigma^d(\alpha) \not= \sigma^d(\alpha)$ or $\langle \alpha, \mu_M \rangle \geq 0$, we denote
  \[
   \phi(u_\alpha) \coloneqq u_\alpha \cdot \Ad_{b\sigma}(u_\alpha) \cdots \Ad_{b\sigma}^{d-1}(u_\alpha) \cdot u_{\alpha'} \cdots \Ad_{b\sigma}^{d-1}(u_\alpha')),
  \]
  where $u_{\alpha'} \in U'_{\alpha',-1}$ is defined by
  \[
   u_{\alpha')} \coloneqq [u_\alpha, \unif^{\mu_M}Ad_{b\sigma}^d(u_\alpha)\unif^{-\mu_M}].
  \]
  Using the same calculations as in the unramified case(\cite[4.2.14]{ChenKisinViehmann:AffDL}, \cite[p.~1392f]{Nie:ADLVcomp}), this morphism indeed maps into $X_\mu(b)$ and can be continued by Lemma~\ref{lem-cont-to-proj-line} to a morphism on $\PP^{1,\perf}$, mapping $\infty$ to $\unif^{\underline\alpha}$. Thus $\underline\alpha \in H_b$. By \cite[Lemma ~8.3]{Nie:ADLVcomp}, the only remaining case is $w_b\sigma^d(\alpha) = \sigma^d(\alpha)$ and $\langle \alpha, \mu_M \rangle = -1$. This case follows by the same construction as the unramified case (\cite[4.6.16]{ChenKisinViehmann:AffDL}), which while being more complicated than the previous case, only relies on root theoretic calculations and Lemma~\ref{lem-cont-to-proj-line}.
  
  \smallskip
  
  \emph{Case $n=3d$:} Note that in case (a), we cannot automatically apply Theorem~\ref{thm-nie-6.8}, as this requires that $j,n-j < 2d$. In the remaining cases it suffices by symmetry  to consider $\mu_M \xrightarrow{(\theta,j)} \mu_M' \xrightarrow{(\sigma^j(\theta),n-j)} \mu_M$ with $j \geq 2d$. It now follows from a combinatorial calculation with $D_4$-root systems (\cite[Lemma~8.5 and p.~1396]{Nie:ADLVcomp}) that for any $u_\theta \in U_{\theta,-1}$ we have
  \[
   \Ad^{j-1}(b\sigma)(u_\theta)\cdots u_\theta \cdot K \in X_\mu(b)
  \]
  and thus we conclude by Lemma~\ref{lem-cont-to-proj-line} that $X_{\mu_M}(b)^{0}$ and $X_{\mu_M}(b)^{\alpha + \dotsb + \sigma^{j-1}(\alpha)}$ lie in the same connected component of $X_\mu(b)$. By applying Theorem~\ref{thm-nie-6.8} to $\mu_M' \xrightarrow{(\sigma^j(\theta),n-j)} \mu_M$, we can conclude that $\unif^{\underline\alpha} \in H_b$.
  
  In case (b), we have
  \[
   \Ad_{b\sigma}^{n-1}(u_\alpha) \cdots u_\alpha \in X_\mu(b)
  \]
  by the same calculations as in the unramified case (\cite{ChenKisinViehmann:AffDL}, \cite[p.1395]{Nie:ADLVcomp}). We conclude by Lemma~\ref{lem-cont-to-proj-line} that $\unif^{\underline\alpha} \in H_b$.
  
 \end{proof}
 

\section{Application to Shimura varieties} \label{sect-Shimura}

\subsection{} \label{ssect-KP-deformation-space}
 Assuming the following conditions are satisfied, Kisin and Pappas  associated a deformation ring $R_G$ to the datum $(G,b,\mu,\Kscr)$ in \cite[\S~3.2]{KisinPappas:ParahoricIntModel}. We assume that $F = \QQ_p$ and that there exists an embedding $\Kscr \mono GL_{n,\ZZ_p}$ such that the following condition are satisfied.
 \begin{assertionlist}
  \item The image of $\Kscr$ in $\GL_{n,\ZZ_p}$ contains the scalar matrices.
  \item The image of $\mu$ is conjugated to the cocharacter $\GG_m \to GL_n, t \mapsto (1,\dotsc,1,t\iv,\dotsc,t\iv)$
  \item There exists a p-divisible group $X$ over $\k$  with Dieudonn\'e crystal $(\ZZ_p^n, b\sigma)$. Furthermore, there exists a cocharacter $\mu\colon \GG_m \to \GL_{n,O_K}$ for a finite extension $K/\L$ such that its generic fibre factors through $G_K$ and induces the Hodge filtration of $X$.
 \end{assertionlist}
 We choose a family $(s_\alpha)$ of elements in $(\ZZ_p^n)^\otimes$ such that $\Kscr$ is their stabiliser. By construction, $\Spf R_G$ is a closed subspace of the deformation space of $X$ and the restriction $X_G^\wedge$ of the universal deformation of $X$ to $\Spf R_G$ is canonically equipped with crystalline Tate-tensors $(t_\alpha^\wedge)$, which specialise to $(s_\alpha)$ over $\Spec k$.
 
\subsection{}
 We denote by $(X_G, t_\alpha)$ the algebraisation of $X_G^\wedge, (t_\alpha^\wedge)$ over $\Def_G \coloneqq \Spec R_G$. Then the isogeny classes of $(X_G,t_\alpha)$ over geometric points are parametrised by a subset of $B(G)$. We denote the Newton stratum corresponding to $\bbf \in B(G)$ by $\Def_G^\bbf$. By \cite[Thm.~1.1]{Kim:CenLeaf} the dimension of $\Def_G^{[b]}$ equals $\dim_{eK} X_\mu(b) + \langle 2\rho,\nu([b]) \rangle$, where $e \in G(L)$  denotes the unit. Since $\dim_{eK} X_\mu(b) \leq d(b,\mu)$ by Theorem~\ref{thm-main}, we obtain an estimate
 \begin{equation} \label{eq-dimension-bound-1}
  \dim \Def_G^{[b]} \leq \langle \rho, \mu+\nu([b] \rangle - \frac{1}{2} \defect_G(b).
 \end{equation}
 This estimate can be expressed in more natural terms. For two elements $\bbf,\bbf' \in B(G)$, we write $\bbf' \leq \bbf$ if $ \nu(\bbf') \leq \nu(\bbf)$ and $\kappa(\bbf') = \kappa(\bbf)$. For $\bbf' \leq \bbf$ we denote by $\ell(\bbf',\bbf)$ the maximal length of a chain $\bbf' < b_1 < \dotsb < \bbf$ in $B[G)$. Using the corrected version of Chai's formula \cite[Thm.~7.4]{Chai:NewtonPolygons} (see also \cite[Prop.~3.11]{Hamacher:NewtonPELSh}, \cite[Thm.~3.4]{Viehmann:NewtonStrat}) the inequality (\ref{eq-dimension-bound-1}) is equivalent to
 \begin{equation} \label{eq-dimension-bound-2}
  \codim \Def_G^{[b]} \geq \ell([b],\bbf_{max}),
 \end{equation}
 where $\bbf_{max} = [\unif^\mu]$ denotes the unique maximal element of $B(G,\mu)$ (cf.~\cite[Thm.~1.1]{HeNie:AcceptableElements}).
 
 We can now show that the above inequalities are in fact equalities.
 
 \begin{prop} \label{prop-Newton-Def}
  For any $[b] \leq \bbf \leq \bbf_{max}$ we have
  \begin{subenv}
   \item $\Def_G^{\bbf}$ is of pure codimension $\ell([b],\bbf_{max})$ in $\Def_G$
   \item $\cl{\Def_G^{\bbf}} = \bigcup_{\bbf' \leq \bbf} \Def_G^{\bbf'}$
  \end{subenv}
 \end{prop}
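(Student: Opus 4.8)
The plan is to turn the inequalities (\ref{eq-dimension-bound-1})/(\ref{eq-dimension-bound-2}) into equalities by pairing the codimension lower bound recorded there with a matching upper bound extracted from purity of the Newton stratification, and then to read the closure relation off purity together with the now sharp dimension formula. Write $\Def_G^{\leq\bbf} \coloneqq \bigcup_{\bbf'\leq\bbf}\Def_G^{\bbf'}$; by Grothendieck's specialisation theorem this is exactly the closed locus where the Newton point is $\leq\bbf$, and $\Def_G^{\bbf_{\max}}$ is the complementary open stratum, which is moreover dense (the universal deformation is generically $\mu$-ordinary, cf.~\cite{KisinPappas:ParahoricIntModel}), so the generic Newton point on each irreducible component of $\Def_G$ is $\bbf_{\max}$. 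I will use three further inputs: the purity theorem of de Jong--Oort and Vasiu (on an integral noetherian base the non-generic Newton locus of a $p$-divisible group is empty or pure of codimension one); the combinatorial fact that $(B(G,\mu),\leq)$ is graded by $\bbf\mapsto\ell(\bbf,\bbf_{\max})$ with covering relations of length one (\cite{Chai:NewtonPolygons}); and the observation that Kim's formula applies to $\Def_G^{\bbf}$ for every occurring $\bbf$, applied to the datum $(G,b_\bbf,\mu,\Kscr)$ with $b_\bbf\in\bbf$ (which controls the completed local rings of $\Def_G$ along $\Def_G^{\bbf}$, since the Hodge cocharacter is constantly $\mu$). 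Combined with Theorem~\ref{thm-main}(1) the latter gives $\dim_x\Def_G^{\bbf}\leq\langle\rho,\mu+\nu(\bbf)\rangle-\frac{1}{2}\defect_G(\bbf)$ for every $x\in\Def_G^{\bbf}$, i.e.\ every component of $\Def_G^{\bbf}$ has codimension $\geq\ell(\bbf,\bbf_{\max})$.

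For (a) I would iterate purity starting from $\Def_G^{\bbf_{\max}}$: inside each irreducible component of a Newton stratum the locus where the Newton point stays constant is open dense, and its complement is pure of codimension one and is covered by strata with strictly smaller generic Newton point. Following this top-down stratification, any $x\in\Def_G^{\bbf}$ lies in a locally closed irreducible piece $W_x$ whose generic Newton point is $\bbf$ and whose codimension equals the length of the chain of generic Newton points of the pieces traversed to reach it; that chain is a strict chain running from $\bbf$ up to $\bbf_{\max}$, hence has length $\leq\ell(\bbf,\bbf_{\max})$, so $\codim_x\Def_G^{\bbf}\leq\ell(\bbf,\bbf_{\max})$. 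Together with the reverse inequality above this shows $\Def_G^{\bbf}$ is pure of codimension $\ell(\bbf,\bbf_{\max})$, and in particular $\Def_G^{\bbf}\neq\emptyset$; this is (a).

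For (b) the inclusion $\cl{\Def_G^{\bbf}}\subseteq\Def_G^{\leq\bbf}$ is Grothendieck specialisation. For the reverse inclusion it suffices, by transitivity of closures and induction down the order, to prove $\Def_G^{\bbf'}\subseteq\cl{\Def_G^{\bbf}}$ for a covering relation $\bbf'\lessdot\bbf$; equivalently, that $\Def_G^{\leq\bbf}$ is pure of codimension $\ell(\bbf,\bbf_{\max})$, so that all its generic points have Newton point exactly $\bbf$ and $\Def_G^{\bbf}$ is dense in it. Purity and (a) already show that the boundary of each component of $\cl{\Def_G^{\bbf}}$ is pure of codimension one, hence by gradedness a union of strata $\Def_G^{\bbf'}$ with $\bbf'\lessdot\bbf$; the remaining --- and this is the main obstacle --- is to rule out that some component of $\Def_G^{\leq\bbf}$ lies in a stratum $\Def_G^{\bbf'}$ with $\bbf'<\bbf$ none of whose points admits a generization into $\Def_G^{\bbf}$. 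This is the one genuinely geometric step, and I would settle it exactly as in the split and PEL cases (\cite{GHKR:DimADL}, \cite{Hamacher:NewtonPELSh}): either by exhibiting explicit one-parameter degenerations inside $\Def_G$ connecting the two strata, in the same spirit as the $\AA^1$- and $\PP^1$-constructions of Section~\ref{sect-cc} and Lemma~\ref{lem-cont-to-proj-line}, or via the almost-product structure of the Newton stratum together with the equidimensionality of central leaves; all non-combinatorial ingredients there are root-theoretic and carry over unchanged to the ramified setting. The Shimura-variety statement Corollary~\ref{cor-Newton-Sh} then follows by the usual local--global patching, since $\Sscr_{\Gsf,K,0}$ is, along each Newton stratum and after completion, isomorphic to $\Def_G$.
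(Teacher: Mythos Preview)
The paper's own proof is a single sentence: invoke the formal argument of \cite[Prop.~3.10]{Viehmann:NewtonStrat} applied to the inequality \eqref{eq-dimension-bound-2}. Your unpacking of part (1) is exactly that argument --- iterate purity of the Newton stratification to bound the codimension from above, and match this against the lower bound coming from Kim's local formula together with Theorem~\ref{thm-main}(1). So for (1) your approach and the paper's coincide.

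Where you diverge is part (2). You correctly isolate the one place where purity plus the codimension bound are not \emph{a priori} sufficient: ruling out an irreducible component of $\Def_G^{\leq\bbf}$ whose generic Newton point lies strictly below $\bbf$. The paper does not spell this out; it simply absorbs it into the citation to Viehmann, whose argument packages purity, the codimension estimate, and the self-similarity of the local rings (the completion of $\Def_G$ at any point of $\Def_G^{\bbf'}$ is again a deformation space of the same kind, to which the same estimate applies) to get both assertions at once. Your proposed alternatives --- explicit one-parameter degenerations or the almost-product structure with equidimensionality of central leaves --- would also work and are indeed how the closure relation was first established in the PEL setting (\cite{Hamacher:NewtonPELSh}), but they are heavier than what the paper invokes. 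Note also that the $\AA^1/\PP^1$-constructions of \S\ref{sect-cc} live on the affine-Grassmannian side and do not transport to curves in $\Def_G$ as directly as you suggest; and \cite{GHKR:DimADL} does not treat closure relations of Newton strata. So those references would need to be replaced or substantially supplemented.

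In summary: your outline is sound and more explicit than the paper's, and you put your finger on the genuine subtlety in (2); the difference is that you propose to resolve it by a separate geometric construction, while the paper treats it as already contained in the cited formal package.
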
 
 \begin{proof}
  This follows by a formal argument from (\ref{eq-dimension-bound-2}) as in \cite[Prop.~3.10]{Viehmann:NewtonStrat}.
 \end{proof}
 
  \begin{cor} \label{cor-equidim}
   Assume that $F = \QQ_p$ and that $(G,\mu,b,\Kscr)$ satisfy the conditions of \S~\ref{ssect-KP-deformation-space}. Then $X_\mu(b)$ is equidimensional. 
  \end{cor}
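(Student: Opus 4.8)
The plan is to apply Proposition~\ref{prop-Newton-Def} together with the dimension formula of \cite[Thm.~1.1]{Kim:CenLeaf} not only at the base point $eK$, where it already gives $\dim_{eK}X_\mu(b) = d_G(\mu,b)$, but at \emph{every} point of $X_\mu(b)$; the key point is that the Kisin-Pappas deformation theory of \S~\ref{ssect-KP-deformation-space} can be set up with an arbitrary point of $X_\mu(b)$ as base point, so the equality $\dim_{eK}X_\mu(b)=d_G(\mu,b)$ is really an equality of the local dimension at any point.

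Concretely, fix $gK \in X_\mu(b)(\k)$ and let $X_{gK}$ be the corresponding $p$-divisible group over $\k$, equipped with the tensors obtained by transporting $(s_\alpha)$ along $g$; its Dieudonn\'e crystal is $(\ZZ_p^n, b_g\sigma)$ with $b_g \coloneqq g\iv b\sigma(g) \in K\unif^\mu K$, which is $\sigma$-conjugate to $b$, and its Hodge cocharacter again lies in the (minuscule) conjugacy class of $\mu$. Hence conditions (a), (b), (c) of \S~\ref{ssect-KP-deformation-space} hold for $(G,\mu,b_g,\Kscr)$, and we obtain a Kisin-Pappas deformation space $\Def_{G,g}$ whose base point corresponds, under the canonical isomorphism $X_\mu(b_g) \cong X_\mu(b)$, to $gK$. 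The only input to Proposition~\ref{prop-Newton-Def} for $\Def_{G,g}$ beyond what is intrinsic to $(G,\mu,[b])$ is the inequality $\dim_{gK}X_\mu(b) \le \dim X_\mu(b) = d_G(\mu,b)$ from Theorem~\ref{thm-main}, which does not depend on the point; so Proposition~\ref{prop-Newton-Def} applies verbatim and yields that $\Def_{G,g}^{[b]}$ has pure dimension $\langle\rho,\mu+\nu([b])\rangle - \tfrac12\defect_G(b)$. Applying \cite[Thm.~1.1]{Kim:CenLeaf} to $\Def_{G,g}$, which reads $\dim\Def_{G,g}^{[b]} = \dim_{gK}X_\mu(b) + \langle 2\rho,\nu([b])\rangle$, we conclude
\begin{align*}
 \dim_{gK}X_\mu(b) &= \langle\rho,\mu+\nu([b])\rangle - \tfrac12\defect_G(b) - \langle 2\rho,\nu([b])\rangle \\
 &= \langle\rho,\mu-\nu([b])\rangle - \tfrac12\defect_G(b) = d_G(\mu,b).
\end{align*}
Since $gK$ was arbitrary, every point of $X_\mu(b)$ lies on an irreducible component of dimension $d_G(\mu,b)$; as $\dim X_\mu(b) = d_G(\mu,b)$ by Theorem~\ref{thm-main}, every irreducible component has dimension $d_G(\mu,b)$, i.e.\ $X_\mu(b)$ is equidimensional.

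The main obstacle is the verification that the hypotheses of \S~\ref{ssect-KP-deformation-space} genuinely persist after moving the base point, in particular the existence in condition (c) of an $O_K$-model for a general $gK$; I expect this to follow formally from the fact that those hypotheses only involve $\Kscr \mono \GL_{n,\ZZ_p}$, the conjugacy class of $\mu$, and the $\sigma$-conjugacy class $[b]$, all of which are unchanged, but it should be spelled out. A variant avoiding this issue is to argue on the Shimura variety itself: the completed local ring of $\Sscr_{\Gsf,K,0}$ at any point is of Kisin-Pappas type, so Proposition~\ref{prop-Newton-Def} shows $\Sscr_{\Gsf,K,0}^{[b]}$ is equidimensional, and the almost-product structure of \cite[Thm.~1.1]{Kim:CenLeaf} transports equidimensionality to $X_\mu(b)$. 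One can also note that $\dim_{gK}X_\mu(b)$ is constant on $J_b(F)$-orbits, reducing to finitely many base points; but since Theorem~\ref{thm-main}(2) only pins down the top-dimensional components, this reduction on its own does not exclude components of smaller dimension, so the moving-base-point (or global) input remains essential.
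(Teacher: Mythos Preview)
Your proof is correct and follows essentially the same approach as the paper: move the base point from $eK$ to an arbitrary $gK$ by replacing $b$ with $b_g = g\iv b\sigma(g)$, observe that the Kisin--Pappas hypotheses for $(G,\mu,b_g,\Kscr)$ still hold, and then read off $\dim_{gK} X_\mu(b) = \dim_{eK} X_\mu(b_g) = d_G(\mu,b)$ from Proposition~\ref{prop-Newton-Def} and Kim's formula. The paper dispatches the persistence of condition~(c) in a single clause (``Note that $(G,\mu,b',\Kscr)$ also satisfies the conditions of \S~\ref{ssect-KP-deformation-space}''), so your caution there is reasonable but not an actual gap.
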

  \begin{proof}
   We consider an arbitrary point $gK \in X_\mu(b)$. Let $b' \coloneqq g\iv b\sigma(g)$ such that we have an isomorphism $X_\mu(b') \isom X_\mu(b), x \mapsto g\cdot x$. Note that $(G,\mu,b',\Kscr)$ also satisfies the conditions of \S~\ref{ssect-KP-deformation-space}. Thus by the previous proposition the inequality (\ref{eq-dimension-bound-1}) is an equality for $b'$ and hence
   \[
    \dim_{gK} X_\mu(b) = \dim_{eK} X_{\mu}(b') = d(\mu,b') = d(\mu,b).
   \]
  \end{proof}

 \subsection{} 
 Let $\Sscr_{\Gsf,K,0}$ denote the special fibre of the Kisin-Pappas model of a Shimura variety of Hodge type with very special level $K$ at $p$.  We denote by $ \Sscr_{\Gsf,K,0}^{\bbf}$ the Newton stratum associated to an element $\bbf \in B(\Gsf_{\QQ_p},\mu)$.
 
 \begin{cor} \label{cor-Newton-Sh}
  For any $\bbf \in B(\Gsf_{\QQ_p},\mu)$, we have
  \begin{subenv}
   \item $\Sscr_{\Gsf,K,0}^{\bbf}$ is of pure codimension $\ell([b],\bbf_{max})$ in $\Sscr_{\Gsf,K,0}$.
   \item $\cl{\Sscr_{\Gsf,K,0}^{\bbf}} = \bigcup_{\bbf' \leq \bbf} \Sscr_{\Gsf,K,0}^{\bbf}$.
  \end{subenv}
 \end{cor}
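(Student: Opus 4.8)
The plan is to reduce the global statement to its local counterpart, Proposition~\ref{prop-Newton-Def}, by means of the Kisin--Pappas local model description of $\Sscr_{\Gsf,K,0}$. Recall that for any closed point $x \in \Sscr_{\Gsf,K,0}$ the abelian scheme $\Ascr$ endows $\Ascr[p^\infty]_x$ with a family of crystalline Tate tensors $(s_\alpha)$ cutting out $\Kscr$, so that $(\Gsf_{\QQ_p}, b_x, \mu, \Kscr)$ is a datum of the kind considered in \S~\ref{ssect-KP-deformation-space}; here $b_x$ is the Frobenius on the Dieudonn\'e module of $\Ascr[p^\infty]_x$, and $[b_x] = \bbf$ whenever $x \in \Sscr_{\Gsf,K,0}^{\bbf}$. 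The hypotheses (a)--(c) of \S~\ref{ssect-KP-deformation-space} are met because $(\Gsf, \Xsf)$ is of Hodge type: a symplectic embedding produces $\Kscr \hookrightarrow \GL_{n,\ZZ_p}$ containing the scalars, and $\mu$ is minuscule. By \cite{KisinPappas:ParahoricIntModel} the complete local ring $\widehat{\Ocal}_{\Sscr_{\Gsf,K,0}, x}$ is isomorphic to the Kisin--Pappas ring $R_{\Gsf_{\QQ_p}}$ attached to $(b_x, \mu, \Kscr)$, and since the Newton stratification on either side is read off from the deformations of the $\Gsf_{\QQ_p}$-isocrystal $(s_\alpha, b_x\sigma)$, this isomorphism identifies the two Newton stratifications.

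Granting this identification, both assertions become local. For (1): if $x \in \Sscr_{\Gsf,K,0}^{\bbf}$ then
\[
 \dim_x \Sscr_{\Gsf,K,0}^{\bbf} = \dim \Def_{\Gsf_{\QQ_p}}^{\bbf} = \dim \Sscr_{\Gsf,K,0} - \ell(\bbf, \bbf_{max})
\]
by Proposition~\ref{prop-Newton-Def}~(1); as this holds at every point of the stratum, $\Sscr_{\Gsf,K,0}^{\bbf}$ is of pure codimension $\ell(\bbf,\bbf_{max})$. For (2), the inclusion $\cl{\Sscr_{\Gsf,K,0}^{\bbf}} \subseteq \bigcup_{\bbf' \leq \bbf}\Sscr_{\Gsf,K,0}^{\bbf'}$ is Grothendieck's specialization theorem for the $\Gsf_{\QQ_p}$-isocrystal. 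Conversely, given $x \in \Sscr_{\Gsf,K,0}^{\bbf'}$ with $\bbf' \leq \bbf \leq \bbf_{max}$, Proposition~\ref{prop-Newton-Def}~(1) shows that $\Def_{\Gsf_{\QQ_p}}^{\bbf}$ is non-empty inside $\Spec R_{\Gsf_{\QQ_p}} \cong \Spec \widehat{\Ocal}_{\Sscr_{\Gsf,K,0},x}$, while Proposition~\ref{prop-Newton-Def}~(2) places $x$ in its closure; the image of $\Def_{\Gsf_{\QQ_p}}^{\bbf}$ in $\Sscr_{\Gsf,K,0}$ therefore consists of generizations of $x$ lying in $\Sscr_{\Gsf,K,0}^{\bbf}$, whence $x \in \cl{\Sscr_{\Gsf,K,0}^{\bbf}}$. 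In particular every stratum indexed by $B(\Gsf_{\QQ_p},\mu) = \{\bbf \leq \bbf_{max}\}$ is non-empty. This is exactly the formal deduction carried out in \cite[Prop.~3.10]{Viehmann:NewtonStrat} (compare \cite{Hamacher:NewtonPELSh}), now available because the local input Proposition~\ref{prop-Newton-Def} is in place.

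The real content lies in the first paragraph, namely in verifying that the deformation ring $R_{\Gsf_{\QQ_p}}$ really describes $\Sscr_{\Gsf,K,0}$ near an arbitrary closed point \emph{compatibly with the Newton stratification}, and in keeping track of the very special parahoric level throughout that comparison. Once the hypotheses of \S~\ref{ssect-KP-deformation-space} are checked at every point and the stratification-compatibility is recorded, the remainder is the standard dimension-and-closure formalism for Newton strata; I therefore expect the comparison step, rather than any of the subsequent bookkeeping, to be the main obstacle, albeit an essentially formal one given \cite{KisinPappas:ParahoricIntModel}.
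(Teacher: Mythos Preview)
Your proposal is correct and follows essentially the same route as the paper: reduce to Proposition~\ref{prop-Newton-Def} by identifying the complete local ring of $\Sscr_{\Gsf,K,0}$ at a closed point with the Kisin--Pappas deformation ring in a way compatible with Newton strata. The paper's proof is more terse and supplies the precise references for the two ingredients you flag as the ``real content'': \cite[Cor.~4.2.4]{KisinPappas:ParahoricIntModel} for the isomorphism $\Spec \widehat{\Ocal}_{\Sscr_{\Gsf,K,0},x} \cong \Def_G$, and \cite[Prop.~4.6]{HamacherKim:Mantovan} for the compatibility with the Newton stratification.
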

 \begin{proof}
  Let $x \in \Sscr_{\Gsf,K,0}^{\bbf}(\k)$. Then  $ \Spec \Oscr_{\Sscr_{\Gsf,K,0},x}^\wedge$ is isomorphic to a deformation space $\Def_G$ as above (\cite[Cor.~4.2.4]{KisinPappas:ParahoricIntModel}) and this isomorphism preserves Newton strata (\cite[Prop.~4.6]{HamacherKim:Mantovan}). Thus the claim is a direct consequence of Proposition~\ref{prop-Newton-Def}.
 \end{proof}

\subsection{}
  New assume that $\cha F = p$. We denote fix $b \in G(L)$ and denote by $\Def_{G,\mu}$ the deformation space of the local shtuka $(K,b\sigma)$ bounded by $\mu$. With the analogous notation as above we obtain the following results.
  
  \begin{prop} \label{prop-Newton-Def-Sht}
  For any $[b] \leq \bbf \leq \bbf_{max}$ we have
  \begin{subenv}
   \item $\Def_{G,\mu}^{\bbf}$ is of pure codimension $\ell([b],\bbf_{max})$ in $\Def_G$
   \item $\cl{\Def_{G,\mu}^{\bbf}} = \bigcup_{\bbf' \leq \bbf} \Def_G^{\bbf',\mu}$
  \end{subenv}
 \end{prop}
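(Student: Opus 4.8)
The plan is to transcribe the proof of Proposition~\ref{prop-Newton-Def} line by line, replacing its two geometric inputs by their equal-characteristic analogues. The first input is the central-leaf dimension formula for the deformation space: the function-field analogue of \cite[Thm.~1.1]{Kim:CenLeaf} should assert that, for $b'\in G(L)$ with $[b']=\bbf$,
\[
 \dim\Def_{G,\mu}^{\bbf}=\dim_{eK}X_{\leq\mu}(b')+\langle 2\rho,\nu(\bbf)\rangle,
\]
the second summand being the dimension of the central leaf through the corresponding point. I would obtain this from the theory of foliations of deformation spaces of bounded local $G$-shtukas developed in \cite{HartlViehmann:Foliation}; in equal characteristic the central-leaf and Igusa-tower constructions are in fact more transparent than in the $p$-adic case, since the relevant covers are honest ind-schemes rather than perfections thereof. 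The second input is Theorem~\ref{thm-main}: since $\bbf\in B(G,\mu)$ and $\langle\rho,\cdot\rangle$ is nondecreasing along $\leq$ (so that $d_G(\mu',b')\leq d_G(\mu,b')$ for all $\mu'\leq\mu$), one has $\dim_{eK}X_{\leq\mu}(b')\leq\dim X_{\leq\mu}(b')=d_G(\mu,b')$. Substituting this into the displayed formula gives, exactly as in the derivation of \eqref{eq-dimension-bound-1},
\[
 \dim\Def_{G,\mu}^{\bbf}\leq\langle\rho,\mu+\nu(\bbf)\rangle-\tfrac12\defect_G(b').
\]

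Next I would invoke the corrected Chai formula \cite[Thm.~7.4]{Chai:NewtonPolygons} (equivalently \cite[Prop.~3.11]{Hamacher:NewtonPELSh} or \cite[Thm.~3.4]{Viehmann:NewtonStrat}) --- a purely combinatorial statement about the poset $B(G,\mu)$, insensitive to $\cha F$ --- to rephrase the last inequality as the codimension estimate
\[
 \codim\Def_{G,\mu}^{\bbf}\geq\ell(\bbf,\bbf_{max}).
\]
The remaining ingredients are formal and characteristic-free: every stratum $\Def_{G,\mu}^{\bbf}$ with $\bbf\in B(G,\mu)$ is non-empty (by \cite[Thm.~A]{He:KottwitzRapoportConj}, since $X_{\leq\mu}(b')\neq\emptyset$), and the Newton stratification satisfies $\cl{\Def_{G,\mu}^{\bbf}}\subseteq\bigcup_{\bbf'\leq\bbf}\Def_{G,\mu}^{\bbf'}$ by semicontinuity of Newton polygons. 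With these in hand the formal argument of \cite[Prop.~3.10]{Viehmann:NewtonStrat} carries over verbatim: a descending induction over $B(G,\mu)$, using the codimension lower bound together with the non-emptiness of all intermediate strata, forces each stratum $\Def_{G,\mu}^{\bbf'}$ with $\bbf'\leq\bbf$ actually to meet $\cl{\Def_{G,\mu}^{\bbf}}$, which simultaneously upgrades the codimension inequality to an equality --- giving the purity statement (a) --- and establishes the closure relation (b).

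The hard part will be the first step, namely verifying that $\Def_{G,\mu}$ carries a central-leaf foliation satisfying the displayed dimension formula, so that its Newton strata are governed by affine Deligne--Lusztig varieties just as Rapoport--Zink deformation spaces are in mixed characteristic. Citing \cite{HartlViehmann:Foliation} makes this immediate; a self-contained treatment would instead construct the Igusa tower over $\Def_{G,\mu}$ by hand from the complete slope-divisibility underlying Lemma~\ref{lem-Lang-bij}, at the cost of essentially reproving loc.~cit. Everything downstream of that input --- Theorem~\ref{thm-main}, Chai's formula, He's non-emptiness criterion, and Viehmann's formal argument --- is either already available in this paper or manifestly independent of the characteristic.
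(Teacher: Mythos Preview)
Your proposal is correct and follows essentially the same line as the paper: establish the dimension formula $\dim \Def_{G,\mu}^{[b]} = \dim_{eK} X_{\leq\mu}(b) + \langle 2\rho,\nu([b])\rangle$, bound the right-hand side via Theorem~\ref{thm-main}, convert to a codimension estimate via Chai's formula, and conclude by \cite[Prop.~3.10]{Viehmann:NewtonStrat}. The only difference is that where you propose to extract the dimension formula from the foliation theory of \cite{HartlViehmann:Foliation}, the paper simply cites \cite[Lemma~3.9, Prop.~3.8]{MilicevicViehmann:GenericNP}, where this formula and $\dim \Def_{G,\mu} = \langle 2\rho,\mu\rangle$ are already recorded --- so the step you flag as ``the hard part'' is available off the shelf.
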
 
 \begin{proof}
  By \cite[Lemma~3.9]{MilicevicViehmann:GenericNP}, we have the dimension of $ Def_{G,\mu}^{[b]}$ equals $ \dim_{eK} X_{\leq\mu}(b) + \langle 2\rho,\nu([b])\rangle$. Since $\dim_{eK} X_{\leq\mu}(b) \leq d(b,\mu)$ by Theorem~\ref{thm-main} and $\dim \Def_{G,\mu} = \langle 2\rho,\mu \rangle$ by \cite[Prop.~3.8]{MilicevicViehmann:GenericNP}, we thus obtain
  \[
   \codim \Def_{G,\mu}^{[b]} \geq \ell([b],\bbf_{max}).
  \]
  Now we conclude by \cite[Prop.~3.10]{Viehmann:NewtonStrat}.
 \end{proof}
 
 \begin{cor} \label{cor-equidim-2}
  Assume that $\cha F = p$. Then $X_{\leq \mu} (b)$ (and thus $X_\mu(b)$) is equidimensional.
 \end{cor}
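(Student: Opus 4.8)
The plan is to run the argument of Corollary~\ref{cor-equidim} essentially verbatim, with the Kisin--Pappas deformation space replaced by the deformation space $\Def_{G,\mu}$ of the local shtuka $(K,b\sigma)$ bounded by $\mu$ and with Proposition~\ref{prop-Newton-Def-Sht} playing the role of Proposition~\ref{prop-Newton-Def}. First I would extract from Proposition~\ref{prop-Newton-Def-Sht} the consequence actually needed: since that proposition gives $\codim \Def_{G,\mu}^{[b]} = \ell([b],\bbf_{max})$, combining it with $\dim \Def_{G,\mu} = \langle 2\rho,\mu\rangle$ \cite[Prop.~3.8]{MilicevicViehmann:GenericNP} and the identity $\dim \Def_{G,\mu}^{[b]} = \dim_{eK} X_{\leq\mu}(b) + \langle 2\rho,\nu([b])\rangle$ of \cite[Lemma~3.9]{MilicevicViehmann:GenericNP} forces the a priori inequality $\dim_{eK} X_{\leq\mu}(b) \leq d(b,\mu)$ coming from Theorem~\ref{thm-main} to be an equality $\dim_{eK} X_{\leq\mu}(b) = d(b,\mu)$.

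Next, to pass from the base point $eK$ to an arbitrary point, let $gK \in X_{\leq\mu}(b)$ and set $b' \coloneqq g\iv b\sigma(g)$; left translation by $g$ is an isomorphism $X_{\leq\mu}(b') \isom X_{\leq\mu}(b)$ carrying $eK$ to $gK$. Since $\cha F = p$ imposes no supplementary hypotheses (unlike the mixed characteristic situation of \S~\ref{ssect-KP-deformation-space}), Proposition~\ref{prop-Newton-Def-Sht} applies equally to $b'$, and $[b']=[b]$ gives $d(b',\mu)=d(b,\mu)$, so the previous step yields
\[
 \dim_{gK} X_{\leq\mu}(b) = \dim_{eK} X_{\leq\mu}(b') = d(b',\mu) = d(b,\mu).
\]
As $gK$ was arbitrary, $X_{\leq\mu}(b)$ is equidimensional, of dimension $d(b,\mu)$ whenever it is nonempty.

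It then remains to deduce the statement for $X_\mu(b)$. Since $K\unif^\mu K$ is open in its closure $\cl{K\unif^\mu K}$, the subset $X_\mu(b)$ (the locus where $g\iv b\sigma(g) \in K\unif^\mu K$) is open in $X_{\leq\mu}(b)$; and an open subscheme of an equidimensional scheme locally of finite type over a field is again equidimensional, because every irreducible component of the open subscheme is an open dense subset of a (unique) irreducible component of the ambient scheme, hence has the same dimension. Thus $X_\mu(b)$ is equidimensional as well.

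I do not expect a genuine obstacle here: the substantive geometry has already been carried out in Proposition~\ref{prop-Newton-Def-Sht}, which itself rests on Theorem~\ref{thm-main} and the deformation-theoretic input of Milicevic--Viehmann and Viehmann. The only points that need a little care are that the translation trick $b \mapsto g\iv b\sigma(g)$ requires no extra assumptions in the function-field case, and the elementary fact that equidimensionality is inherited by open subschemes, which makes the reduction from $X_{\leq\mu}(b)$ to $X_\mu(b)$ harmless.
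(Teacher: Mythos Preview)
Your proposal is correct and follows essentially the same approach as the paper, which simply says ``the proof is the same as the one of Corollary~\ref{cor-equidim} above'': translate an arbitrary point to the base point via $b' = g\iv b\sigma(g)$, invoke Proposition~\ref{prop-Newton-Def-Sht} for $b'$ to turn the inequality into an equality, and conclude. Your version is slightly more explicit in spelling out how the equality $\dim_{eK} X_{\leq\mu}(b) = d(b,\mu)$ is extracted and in justifying the passage from $X_{\leq\mu}(b)$ to $X_\mu(b)$ via openness, but these are exactly the intended steps.
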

 \begin{proof}
  The proof is the same as the one of Corollary~\ref{cor-equidim} above.
 \end{proof}
 
 \begin{cor}
  Fix $\mu \in X_\ast(T)_I$. For any $[b] \leq \bbf \leq \bbf_{max}$ we have
  \begin{subenv}
   \item $(\cl{K\unif^\mu K})^{\bbf}$ is of pure codimension $\ell([b],\bbf_{max})$ in $\cl{K\unif^\mu K}$.
   \item The closure of $(\cl{K\unif^\mu K})^\bbf$ equal $\bigcup_{\bbf' \leq \bbf} (\cl{K\unif^\mu K})^{\bbf'}$.
  \end{subenv}
 \end{cor}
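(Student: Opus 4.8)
The plan is to deduce the statement from Proposition~\ref{prop-Newton-Def-Sht}, of which it is the loop-group analogue: the affine Schubert variety $\cl{K\unif^\mu K}$, equipped with the Newton stratification, plays here the role of the deformation space $\Def_{G,\mu}$. The ingredients are the dimension formula for affine Deligne--Lusztig varieties (Theorem~\ref{thm-main}(1)), their equidimensionality in equal characteristic (Corollary~\ref{cor-equidim-2}), Grothendieck's specialisation theorem for $F$-isocrystals, and the formal dimension-counting argument of \cite[Prop.~3.10]{Viehmann:NewtonStrat}.

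I would first prove (i). Fix $\bbf = [b] \in B(G,\mu)$ with standard representative $b$, so that $(\cl{K\unif^\mu K})^{\bbf} = [b]\cap\cl{K\unif^\mu K}$. As in the introduction, the local model map $\ell_b$ restricts to a morphism
\[
 \Xtilde_{\leq\mu}(b) = \{g\in LG \mid g\iv b\sigma(g)\in\cl{K\unif^\mu K}\} \to [b]\cap\cl{K\unif^\mu K},
\]
which is a pro\'etale $\underline{J_b(F)}$-torsor up to universal homeomorphism, while $\Xtilde_{\leq\mu}(b)\to X_{\leq\mu}(b)$ is an $L^+\Kscr$-torsor. Keeping track of the dimension of $L^+\Kscr$ relative to $J_b(F)$ --- whose contribution is $\langle 2\rho, \nu([b])\rangle$, exactly as in \cite[Lem.~3.9]{MilicevicViehmann:GenericNP} --- and working with the honest ind-scheme structure on $\cl{K\unif^\mu K}$ (the ``admissible'' formalism of \S2 does not apply directly, since the Newton stratum is not stable under a congruence subgroup acting on the right), one obtains
\[
 \codim_{\cl{K\unif^\mu K}}(\cl{K\unif^\mu K})^{\bbf} = \langle 2\rho,\mu\rangle - \dim X_{\leq\mu}(b) - \langle 2\rho,\nu([b])\rangle .
\]
Since $\langle\rho,-\rangle$ is strictly increasing for the dominance order, the maximum of $\dim X_{\mu'}(b) = d_G(\mu',b)$ over $\mu'\leq\mu$ with $X_{\mu'}(b)\neq\emptyset$ is attained at $\mu'=\mu$, so $\dim X_{\leq\mu}(b) = d_G(\mu,b)$ by Theorem~\ref{thm-main}(1). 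Substituting and invoking the corrected Chai formula exactly as in the proof of Proposition~\ref{prop-Newton-Def-Sht} (the function-field analogue of \eqref{eq-dimension-bound-1}, \eqref{eq-dimension-bound-2}), the right-hand side equals $\ell(\bbf,\bbf_{max})$; purity follows because $X_{\leq\mu}(b)$ is equidimensional by Corollary~\ref{cor-equidim-2} and both torsors preserve equidimensionality. The cleanest route, if one is prepared to invoke it, is to observe that each point of $(\cl{K\unif^\mu K})^{\bbf}$ admits a formal (or \'etale) neighbourhood which is a smooth cover, compatibly with the Newton stratification, of $\Def_{G,\mu}$ formed from the corresponding $b$ --- i.e.\ $\cl{K\unif^\mu K}$ is, up to the $L^+\Kscr$-gauge action, the space $\Def_{G,\mu}$ --- whence (i) is immediate from Proposition~\ref{prop-Newton-Def-Sht}(i).

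For (ii), the inclusion $\cl{(\cl{K\unif^\mu K})^{\bbf}} \subseteq \bigcup_{\bbf'\leq\bbf}(\cl{K\unif^\mu K})^{\bbf'}$ is Grothendieck's specialisation theorem for $F$-isocrystals applied inside $\cl{K\unif^\mu K}$. The reverse inclusion is a formal consequence of (i): once every Newton stratum $(\cl{K\unif^\mu K})^{\bbf''}$ has codimension exactly $\ell(\bbf'',\bbf_{max})$, a downward induction on $\bbf$ shows that $\cl{(\cl{K\unif^\mu K})^{\bbf}}$ meets, hence contains, every lower stratum --- this is precisely the argument of \cite[Prop.~3.10]{Viehmann:NewtonStrat} used to prove Proposition~\ref{prop-Newton-Def-Sht}(ii).

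The main obstacle is the dimension comparison in the first step: the interplay between the pro\'etale $\underline{J_b(F)}$-torsor and the $L^+\Kscr$-torsor is delicate because $(\cl{K\unif^\mu K})^{\bbf}$ is not admissible in the sense of \S2, so one either works directly with the ind-scheme $\cl{K\unif^\mu K}$ or, preferably, makes precise the local identification of $\cl{K\unif^\mu K}$ with $\Def_{G,\mu}$ up to $L^+\Kscr$-gauge; granted that identification, the corollary is a direct consequence of Proposition~\ref{prop-Newton-Def-Sht} and everything else is routine.
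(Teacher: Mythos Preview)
Your ``cleanest route'' is exactly the paper's argument, and the obstacle you flag --- that the Newton stratum might not be right-$K_n$-stable and hence not admissible --- is precisely where you diverge from the paper. The paper simply asserts that one \emph{can} choose $n$ large enough that every $[b]\cap\cl{K\unif^\mu K}$ with $[b]\in B(G,\mu)$ is right-$K_n$-stable (this holds because the Newton stratification on a bounded subset of $LG$ is a finite locally closed stratification and hence descends to some finite-level quotient). Having passed to $\cl{K\unif^\mu K}/K_n$, the formal neighbourhood of a point $b'$ is by definition the deformation space $\Def_{G,\mu,n}$ of the local $K_n$-shtuka $(K_n,b'\sigma)$, and the paper records the product decomposition $\Def_{G,\mu,n}\cong (K/K_n)^\wedge_e\times\Def_{G,\mu}$, compatible with the Newton stratification (citing \cite[p.~7]{MilicevicViehmann:GenericNP}). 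Both parts of the corollary then follow directly from Proposition~\ref{prop-Newton-Def-Sht}; there is no separate appeal to Grothendieck specialisation or to the inductive argument of \cite{Viehmann:NewtonStrat}, as those are already absorbed into the proof of that proposition.

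Your first approach --- computing the codimension directly via the $\underline{J_b(F)}$- and $L^+\Kscr$-torsors over $\Xtilde_{\leq\mu}(b)$ together with Corollary~\ref{cor-equidim-2} --- is a legitimate alternative for part~(1), but once the admissibility concern dissolves it is more laborious than necessary, since Proposition~\ref{prop-Newton-Def-Sht} already packages the dimension count and the closure relation together.
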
 
 \begin{proof}
  Choose $n \in \NN$ such that $[b] \cap \cl{K \unif^\mu K}$ is right-$K_n$ stable for all $[b] \in B(G,\mu)$.  By definition, the formal neighbourhood $b \in \cl{K \unif^{\mu} K}/K_n$ is isomorphic to $\Def_{G,\mu,n}$, parametrising the deformations of the local $K_n$-shtuka $(K_n,b\sigma)$. Note that $\Def_{G,\mu,n} \cong (K/K_n)_e^\wedge \times \Def_{G,\mu}$, compatible with Newton stratification (see also \cite[p.~7]{MilicevicViehmann:GenericNP}), the statement follows from Proposition~\ref{prop-Newton-Def-Sht}.
 \end{proof}
 \def\cprime{$'$}
\providecommand{\bysame}{\leavevmode\hbox to3em{\hrulefill}\thinspace}
\providecommand{\MR}{\relax\ifhmode\unskip\space\fi MR }
\providecommand{\MRhref}[2]{%
  \href{http://www.ams.org/mathscinet-getitem?mr=#1}{#2}
}
\providecommand{\href}[2]{#2}


\begin{thebibliography}{GHKR06}

\bibitem[BS17]{BhattScholze:AffGr}
Bhargav Bhatt and Peter Scholze, \emph{Projectivity of the {W}itt vector affine
  {G}rassmannian}, Invent. Math. \textbf{209} (2017), no.~2, 329--423.
  \MR{3674218}

\bibitem[Cha00]{Chai:NewtonPolygons}
Ching-Li Chai, \emph{Newton polygons as lattice points}, Amer. J. Math.
  \textbf{122} (2000), no.~5, 967--990. \MR{1781927}

\bibitem[CKV13]{ChenKisinViehmann:AffDL}
Miaofen Chen, Mark Kisin, and Eva Viehmann, \emph{Connected components of
  affine {D}eligne-{L}usztig varieties in mixed characteristic}, Preprint, {\tt
  arXiv:math/1307.3845} (2013), 60~pages.

\bibitem[CV18]{ChenViehmann:ADLV}
Miaofen Chen and Eva Viehmann, \emph{Affine {D}eligne-{L}usztig varieties and
  the action of {$J$}}, J. Algebraic Geom. \textbf{27} (2018), no.~2, 273--304.
  \MR{3764277}

\bibitem[GHKR06]{GHKR:DimADL}
Ulrich G{{\"o}}rtz, Thomas~J. Haines, Robert~E. Kottwitz, and Daniel~C. Reuman,
  \emph{Dimensions of some affine {D}eligne-{L}usztig varieties}, Ann. Sci.
  {\'E}cole Norm. Sup. (4) \textbf{39} (2006), no.~3, 467--511. \MR{2265676
  (2008e:14068)}

\bibitem[Hai18]{Haines:Dualities}
Thomas~J. Haines, \emph{Dualities for root systems with automorphisms and
  applications to non-split groups}, Represent. Theory \textbf{22} (2018),
  1--26. \MR{3772644}

\bibitem[Ham15a]{Hamacher:DimADLV}
Paul Hamacher, \emph{The dimension of affine {D}eligne-{L}usztig varieties in
  the affine {G}rassmannian}, Int. Math. Res. Not. IMRN (2015), no.~23,
  12804--12839. \MR{3431637}

\bibitem[Ham15b]{Hamacher:NewtonPELSh}
\bysame, \emph{The geometry of {N}ewton strata in the reduction modulo {$p$} of
  {S}himura varieties of {PEL} type}, Duke Math. J. \textbf{164} (2015),
  no.~15, 2809--2895. \MR{3430453}

\bibitem[He20+]{He:CordialElts}
Xuhua He, \emph{Cordial elements and dimensions of affine deligne-lusztig
  varieties}, arXiv:2001.03325, Preprint.

\bibitem[He12]{He:GeomADL}
\bysame, \emph{Geometric and homological properties of affine deligne-lusztig
  varieties}, Preprint, {\tt arXiv:math/1201.4901} (2012).

\bibitem[He16]{He:KottwitzRapoportConj}
Xuhua He, \emph{Kottwitz-{R}apoport conjecture on unions of affine
  {D}eligne-{L}usztig varieties}, Ann. Sci. \'{E}c. Norm. Sup\'{e}r. (4)
  \textbf{49} (2016), no.~5, 1125--1141. \MR{3581812}

\bibitem[HK19]{HamacherKim:Mantovan}
Paul Hamacher and Wansu Kim, \emph{l-{A}dic \'{e}tale cohomology of {S}himura
  varieties of {H}odge type with non-trivial coefficients}, Math. Ann.
  \textbf{375} (2019), no.~3-4, 973--1044. \MR{4023369}

\bibitem[HN18]{HeNie:AcceptableElements}
Xuhua He and Sian Nie, \emph{On the acceptable elements}, Int. Math. Res. Not.
  IMRN (2018), no.~3, 907--931. \MR{3801450}

\bibitem[HR10]{HainesRostami:RamifiedSatake}
Thomas~J. Haines and Sean Rostami, \emph{The {S}atake isomorphism for special
  maximal parahoric {H}ecke algebras}, Represent. Theory \textbf{14} (2010),
  264--284. \MR{2602034}

\bibitem[HV12]{HartlViehmann:Foliation}
Urs Hartl and Eva Viehmann, \emph{Foliations in deformation spaces of local
  {$G$}-shtukas}, Adv. Math. \textbf{229} (2012), no.~1, 54--78. \MR{2854170}

\bibitem[HV18]{HamacherViehmann:ADLVIrredComp}
Paul Hamacher and Eva Viehmann, \emph{Irreducible components of minuscule
  affine {D}eligne-{L}usztig varieties}, Algebra Number Theory \textbf{12}
  (2018), no.~7, 1611--1634. \MR{3871504}

\bibitem[HZ16]{HeZhou:ADLVcomp}
Xuhua He and Rong Zhou, \emph{On the connected components of affine
  deligne-lusztig varieties}, arXiv:1610.06879, 2016+, Preprint.

\bibitem[Kas90]{Kashiwara:Crystalizing}
Masaki Kashiwara, \emph{Crystalizing the {$q$}-analogue of universal enveloping
  algebras}, Comm. Math. Phys. \textbf{133} (1990), no.~2, 249--260.
  \MR{1090425}

\bibitem[Kim19]{Kim:CenLeaf}
Wansu Kim, \emph{On central leaves of {H}odge-type {S}himura varieties with
  parahoric level structure}, Math. Z. \textbf{291} (2019), no.~1-2, 329--363.
  \MR{3936073}

\bibitem[Kno05]{Knop:KazhdanLusztigBasis}
Friedrich Knop, \emph{On the {K}azhdan-{L}usztig basis of a spherical {H}ecke
  algebra}, Represent. Theory \textbf{9} (2005), 417--425. \MR{2142817}

\bibitem[Kot97]{Kottwitz:GIsoc2}
Robert~E. Kottwitz, \emph{Isocrystals with additional structure. {II}},
  Compositio Math. \textbf{109} (1997), no.~3, 255--339. \MR{1485921
  (99e:20061)}

\bibitem[KP18]{KisinPappas:ParahoricIntModel}
M.~Kisin and G.~Pappas, \emph{Integral models of {S}himura varieties with
  parahoric level structure}, Publ. Math. Inst. Hautes \'{E}tudes Sci.
  \textbf{128} (2018), 121--218. \MR{3905466}

\bibitem[Lan96]{Landvogt:Compactification}
Erasmus Landvogt, \emph{A compactification of the {B}ruhat-{T}its building},
  Lecture Notes in Mathematics, vol. 1619, Springer-Verlag, Berlin, 1996.
  \MR{1441308}

\bibitem[Lit95]{Littelmann:PathsAndRootOp}
Peter Littelmann, \emph{Paths and root operators in representation theory},
  Ann. of Math. (2) \textbf{142} (1995), no.~3, 499--525. \MR{1356780}

\bibitem[Lus90]{Lusztig:CanonicalBases}
G.~Lusztig, \emph{Canonical bases arising from quantized enveloping algebras},
  J. Amer. Math. Soc. \textbf{3} (1990), no.~2, 447--498. \MR{1035415}

\bibitem[MV19]{MilicevicViehmann:GenericNP}
Elizabeth MiliÄ‡eviÄ‡ and Eva Viehmann, \emph{Generic newton points and the
  newton poset in iwahori double cosets}, 2019+, to appear in Forum of
  Mathematics, Sigma.

\bibitem[Nie18+]{Nie:ADLVIrredComp}
Sian Nie, \emph{Irreducible components of affine {D}eligne-{L}usztig
  varieties}, arXiv:1809.03683, Preprint.

\bibitem[Nie18]{Nie:ADLVcomp}
\bysame, \emph{Connected components of closed affine {D}eligne-{L}usztig
  varieties in affine {G}rassmannians}, Amer. J. Math. \textbf{140} (2018),
  no.~5, 1357--1397. \MR{3862068}

\bibitem[NP01]{NgoPolo:CasselmanShalika}
B.~C. Ng\^{o} and P.~Polo, \emph{R\'{e}solutions de {D}emazure affines et
  formule de {C}asselman-{S}halika g\'{e}om\'{e}trique}, J. Algebraic Geom.
  \textbf{10} (2001), no.~3, 515--547. \MR{1832331}

\bibitem[PR08]{PappasRapoport:AffineFlag}
George Pappas and Michael Rapoport, \emph{Twisted loop groups and their affine
  flag varieties}, Adv. Math. \textbf{219} (2008), no.~1, 118--198, With an
  appendix by T. Haines and Rapoport. \MR{2435422 (2009g:22039)}

\bibitem[Pra16+]{Prasad:UnramDescent}
Gopal Prasad, \emph{A new approach to unramified descent in {B}ruhat-{T}its
  theory}, arXiv:1611.07430v7, Preprint.

\bibitem[Ric16]{Richarz:AffGr}
Timo Richarz, \emph{Affine {G}rassmannians and geometric {S}atake
  equivalences}, Int. Math. Res. Not. IMRN (2016), no.~12, 3717--3767.
  \MR{3544618}

\bibitem[vH20+]{vanHoften:ModpPtsShVar}
Pol van Hoften, \emph{Mod p points on shimura varieties of parahoric level
  (with an appendix by rong zhou)}, arXiv:2010.10496v1, Preprint, with an
  appendix by Rong Zhou.

\bibitem[Vie20]{Viehmann:NewtonStrat}
Eva Viehmann, \emph{On the {N}ewton stratification}, Shimura Varieties, LMS
  Lecture Note Series, no. 457, London Mathematical Society, 2020,
  pp.~192--208.

\bibitem[XZ17+]{XiaoZhu:CyclesShVar}
Liang Xiao and Xinwen Zhu, \emph{Cycles on shimura varieties via geometric
  satake}, Preprint, arXiv:1707.05700.

\bibitem[Zhu15]{Zhu:RamifiedSatake}
Xinwen Zhu, \emph{The geometric {S}atake correspondence for ramified groups},
  Ann. Sci. \'{E}c. Norm. Sup\'{e}r. (4) \textbf{48} (2015), no.~2, 409--451.
  \MR{3346175}

\bibitem[ZZ18+]{ZhouZhu:IrredCompADLV}
Rong Zhou and Yihang Zhu, \emph{Twisted orbital integrals and irreducible
  components of affine {D}eligne-{L}usztig varieties}, arXiv:1811.11159,
  Preprint.

\end{thebibliography}
 \end{document}